\newcommand{\comment}[1]{}
\newcommand{\C}{\mathbb{C}}
\newcommand{\Ps}{\mathbb{P}}
\newcommand{\Q}{\mathbb{Q}}
\newcommand{\Z}{\mathbb{Z}}
\newcommand{\R}{\mathbb{R}}
\newcommand{\F}{\mathbb{F}}
\newcommand{\pa}{\mathfrak{p}}
\DeclareMathOperator{\Gal}{Gal}
\DeclareMathOperator{\Hom}{Hom}
\DeclareMathOperator{\lcm}{lcm}
\newcommand{\mapsfrom}{\raisebox{1ex}{\rotatebox{180}{$\mapsto$}}}
\theoremstyle{plain}
\newtheorem{theorem}{Theorem}[section]
\newtheorem{corollary}[theorem]{Corollary}
\newtheorem{lemma}[theorem]{Lemma}
\newtheorem{proposition}[theorem]{Proposition}
\theoremstyle{definition}
\newtheorem{definition}[theorem]{Definition}
\newtheorem{remark}[theorem]{Remark}
\newtheorem{example}[theorem]{Example}
\begin{document}

\title[$\Z_p$-extensions]{On the arithmetic of $\Z_p$-extensions}

\author{Michiel Kosters}
\address{University of California, Irvine, Department of
Mathematics, 340 Rowland Hall, Irvine, CA 92697}
\email{kosters@gmail.com}
\author{Daqing Wan}
\address{University of California, Irvine, Department of
Mathematics, 340 Rowland Hall, Irvine, CA 92697}
\email{dwan@math.uci.edu}

\date{\today}

\subjclass[2010]{	11R37 (primary), 11G20, 12F05.}
\keywords{$\Z_p$-extension, Artin-Schreier-Witt, Schmid-Witt, local field, global field, genus, conductor, ramification groups, discriminants}
\maketitle

\begin{abstract}
This paper was motivated by the possible Newton slope stable property \cite{WAN7} of the zeta function in a $\Z_p$-tower 
of curves over a finite field of characteristic $p$. Our aim here is to  
develop the foundational materials for $\Z_p$-extensions of a global function field of characteristic $p$. 
This paper contains three parts. 

In the first part, we give a thorough overview of the theory of Artin-Schreier-Witt extensions: this theory allows one to understand the $\Z/p^n\Z$-extensions of any field $K$ of characteristic $p$ 
via $p$-typical Witt vectors. 
Let $W_n(K)$ be the ring of $p$-typical Witt vectors of $K$ of length $n$ and let 
$\wp = F-\mathrm{id}: W_n(K)\longrightarrow W_n(K)$, where $F$ is the Frobenius map and $\mathrm{id}$ is the identity map.  
 Artin-Schreier-Witt theory tells us that the abelian group $W_n(K)/\wp W_n(K)$ represents the set of $\Z/p^n\Z$-extensions of $K$. Since this theory is hard to find in literature, we have included 
a complete treatment in the paper.

In the second part of the paper, we study $\Z_p$-extensions of a local field $K=k((T))$ of characteristic $p>0$ where $k$ is a finite field. Local class field theory and Artin-Schreier-Witt theory give us the Schmid-Witt symbol 
\begin{align*}
[\ ,\ ): W(K)/\wp W(K) \times \widehat{K^*} \to W(\F_p)=\Z_p,
\end{align*}
which contains the ramification information of $\Z_p$-extensions of $K$. We present a new simplified formula for $[\ ,\ )$. This formula allows one to compute ramification groups, conductors and discriminants in an easy way.

In the third part, we study $\Z_p$-extensions of global function fields over a finite field. First, we give a formula for computing the genus in such a tower. We show that a previously obtained lower bound for the genus growth in a $\Z_p$-extension is incorrect and we give a sharp lower bound. We also study when the genus behaves in a `stable' way. Finally, we find unique representatives of $\Z_p$-extensions of the rational function field $k(X)$, and compute the genus in such a tower. 
\end{abstract}

\maketitle
\tableofcontents

\section{Introduction}

\subsection{$\Z_p$-extensions in characteristic $p$}

Let $K$ be a field of characteristic $p>0$. Let $l$ be a positive integer. We would like to classify the cyclic Galois extensions of $K$ of degree $l$.

Assume first that $l$ is coprime to $p$. Kummer-theory solves this problem, provided that $K$ contains a primitive $l$-th root of unity: it essentially shows that such extensions are given by polynomials of the form $X^l=a$ with $a \in K$. This theory can be found in many books, such as in \cite{LA}. 

Assume now that $l=p$. The situation changes slightly: Artin-Schreier theory tells us that cyclic extensions of degree $p$ are given by equations of the form $X^p-X=a$, where $a \in K$. This theory can also be found in many books, such as in \cite{LA}. It is less well-known that this theory can be extended to the case when $l=p^n$.

Assume $l=p^n$ with $n \geq 1$. The theory of Artin-Schreier-Witt extensions handles this case (\cite{LA}, \cite{WITT}). This theory tells us that all $\Z/p^n\Z$-extensions of $K$ are given by equations of the form $X^p-X=a$,  where $a \in W_n(K)$ is a $p$-typical Witt vector of length $n$ over $K$ (see Section \ref{s2}). Similarly, $\Z_p$-extensions are given by equations of the form $X^p-X=a$,  where $a \in W(K)$ is a $p$-typical Witt vector over $K$.

In Section \ref{s2}, we try to give a complete description of the Artin-Schreier-Witt theory. Most results in this section are known, but the authors could not find a source which contains a similar amount of details. 

\subsection{$\Z_p$-extensions of $k((T))$}

Set $K=k((T))$, where $k$ is a finite field. Local class field theory studies the abelian Galois extensions of $K$. Combining local class field theory and the theory of Artin-Schreier-Witt extensions gives us the so-called $n$th Schmid-Witt pairing (Proposition \ref{711}):
\begin{align*}
[\ ,\ )_n: W_n(K) \times K^* \to W_n(\F_p) =\Z/p^n\Z.
\end{align*}
In Section \ref{s3} we study this symbol.
This pairing has a simple well known formula when $n=1$. In that case $W_1(K)=K$, and one has $$[x,y)_1=\mathrm{Tr}_{k/\F_p} \left( \mathrm{Res}(x dy/y) \right),$$
where $\mathrm{Res}: K \to k$ is the residue map and $d$ is the differentiation map. For $n>1$, and for the limit symbol 
$$[\ ,\ ): W(K) \times K^* \to W(\F_p)=\Z_p,$$
less satisfying formulas were known (for example, Theorem \ref{f1}, \cite{WITT}, \cite{THO}). Our first achievement is a new formula for $[\ ,\ )$, which resembles the simple formula when $n=1$. Let us explain the result. The left kernel of $[\ ,\ )$ is a subgroup called $\wp W(K)$. We first get a better understanding of the abelian quotient group $W(K)/\wp W(K)$. For $a \in K$ we set $[a]=(a,0,0,\ldots) \in W(K)$, the Teichm\"uller lift of $a$. Fix $\alpha \in k$ with $\mathrm{Tr}_{k/\F_p}(\alpha) \neq 0$. Let $\beta=[\alpha] \in W(k)$. We show (Proposition \ref{712}) that any element of $x \in W(K)/\wp W(K)$ has a unique representative of the form
\begin{align*}
x \equiv  c \beta + \sum_{(i,p)=1} c_i [ T ]^{-i} \pmod{\wp W(K)}
\end{align*}
with $c \in W(\F_p)$ and $c_i \in W(k)$ with $c_i \to 0$ as $i \to \infty$. We set 
\begin{align*}
\tilde{x}= c \beta +  \sum_{(i,p)=1} c_i T^{-i} \in W(k)[[T^{-1}]].
\end{align*} 
Any $y \in K^*$ can be written uniquely as 
\begin{align*}
y=a \cdot T^e \cdot \prod_{(i,p)=1}^{\infty}  \prod_{j=0}^{\infty} (1- a_{ij} T^i)^{p^j}
\end{align*}
with $a \in k^*$, $a_{ij} \in k$, $e \in \Z$. 
Set 
\begin{align*}
\tilde{y}= [a] \cdot T^e \cdot \prod_{(i,p)=1}^{\infty}  \prod_{j=0}^{\infty} (1- [a_{ij}] T^i)^{p^j} \in W(k)[[T]].
\end{align*}
We let $\mathrm{Res}$ be the residue of an element in (an enlargement of) $W(k)[[T]]$, and we let $d$ be the derivative operator on $W(k)[[T]]$. One then has the following theorem (Theorem \ref{100}).

\begin{theorem}
\begin{align*}
[x,y) = \mathrm{Tr}_{W(k)/W(\F_p)} \left( \mathrm{Res}( \tilde{x} \cdot d\tilde{y}/\tilde{y} \right).
\end{align*}
\end{theorem}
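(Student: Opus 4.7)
The plan is to verify the formula on a set of generators for $W(K)/\wp W(K)$ and $K^*$, and then extend by bilinearity and continuity. Both sides of the proposed identity are continuous and bilinear in an appropriate sense: the Schmid--Witt pairing is additive in $x$ and multiplicative in $y$, and the right-hand side inherits these properties because $\tilde x\mapsto \mathrm{Res}(\tilde x\cdot d\tilde y/\tilde y)$ is linear in $\tilde x$, while $d\tilde y/\tilde y$ is a logarithmic derivative. Continuity on the $x$-side corresponds to $c_i\to 0$ as $i\to\infty$, and continuity on the $y$-side to convergence of the displayed infinite products. Granting this, it suffices to check the identity when $x$ runs through the building blocks $c\beta$ and $c_i[T]^{-i}$ from Proposition \ref{712}, and $y$ runs through $a\in k^*$, $T$, and $1-a_{ij}T^i$ with $(i,p)=1$.

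I would then compute both sides in each of these cases. For $x=c\beta$, the element corresponds under Artin--Schreier--Witt theory to an unramified $\Z_p$-extension, so $[c\beta,y)$ depends only on $v(y)$ and equals $\mathrm{Tr}_{W(k)/W(\F_p)}(c\beta)\cdot v(y)$ by the Frobenius description of the unramified symbol; simultaneously $\tilde x=c\beta$ is a constant, so $\mathrm{Res}(\tilde x\cdot d\tilde y/\tilde y)=c\beta\cdot v(\tilde y)=c\beta\cdot v(y)$. The cases $x=c_i[T]^{-i}$ with $y=a\in k^*$ or $y=T$ are settled by direct inspection: $d\tilde y/\tilde y$ is either $0$ or $dT/T$, and the residue of $c_i T^{-i-1}\,dT$ vanishes for $i\ge 1$, matching the vanishing of the ramified symbol on these $y$. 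The essential case is $x=c_i[T]^{-i}$ with $y=1-a_{ij}T^{i'}$. Here I would combine the older formula of Theorem \ref{f1} with the observation that $d\tilde y/\tilde y = p^j\,d(1-[a_{ij}]T^{i'})/(1-[a_{ij}]T^{i'})$, expand as a power series, and extract the coefficient of $T^{-1}$ after multiplying by $c_i T^{-i}$. The resulting Witt-vector expression must be shown to equal the one given by Theorem \ref{f1}; this is an identity between Witt sums involving Teichm\"uller lifts and follows from the defining polynomial identities of Witt vectors after some bookkeeping.

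The main obstacle is that addition in $W(K)$ is given by polynomials that are nonlinear in coordinates, so the naive ``linear lift'' $x\mapsto\tilde x$ is not compatible with the true Witt sum of the building blocks. Reassembling $c\beta + \sum c_i[T]^{-i}$ inside $W(K)$ produces a carrying correction, and one must verify that this correction lies in $\wp W(K)$ and does not contribute to the pairing, while also producing no extra residue on the right-hand side beyond what is already accounted for. I would handle this by induction on the length of the truncation $W_n(K)$, using the ghost-component description of $\wp$ to write each carry as an Artin--Schreier--Witt boundary. The continuity of both sides and the uniqueness of the canonical representative from Proposition \ref{712} then ensure that this length-by-length argument passes to the inverse limit, yielding the claimed identity for $W(K)$.
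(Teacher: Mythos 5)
Your overall strategy is the same as the paper's: reduce by bilinearity and continuity to building blocks $c\beta$, $c_i[T]^{-i}$ on the $x$\dash side and $T$, $1-a_{ij}T^i$ on the $y$\dash side, dispose of the unramified and valuation cases via Proposition \ref{711}, and compare the essential case $[\,[bT^{-l}],\,1-aT^{i}\,)$ against the classical formula of Theorem \ref{f1} (where the ghost and inverse-ghost computations collapse to Teichm\"uller elements and the residue is a single monomial). So far so good.

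However, the ``main obstacle'' you identify in your third paragraph is not actually an obstacle, and the inductive carry-tracking machinery you propose to meet it is unnecessary. The unique representative $c\beta + \sum_{(i,p)=1} c_i[T]^{-i}$ of Proposition \ref{712} is a genuine sum in the ring $W(K)$, the pairing $[\,\cdot\,,y)$ is by construction a group homomorphism on $W(K)/\wp W(K)$, and $\mathrm{Res}(\,\cdot\,\cdot\mathrm{dlog}\,\tilde y)$ is $W(k)$\dash linear; so both sides distribute over that Witt sum term by term with no correction. Moreover the reduction map $\tilde{\ }$ itself \emph{is} additive on reduced forms: if $x$ and $x'$ have reduced coefficients $(c,c_i)$ and $(c',c_i')$, then $c_i[T]^{-i}+c_i'[T]^{-i}=(c_i+c_i')[T]^{-i}$ by distributivity in $W(K)$, so the reduced form of $x+x'$ has coefficients $(c+c',c_i+c_i')$ and $\widetilde{x+x'}=\tilde x+\tilde x'$. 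There is no Witt-vector carrying between distinct blocks $c_i[T]^{-i}$ that survives to the pairing, and no need to chase Artin--Schreier--Witt boundaries through truncation levels. Once this is recognized, your plan is exactly the paper's proof: the only substantive work is the single residue computation in the essential case, which you correctly locate but leave as ``some bookkeeping.''
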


The simple nature of the above formula, simplifies ramification group and conductor computations (Theorem \ref{up}, Proposition \ref{715}, compare to \cite{THO} and \cite{SCH1}). We also establish a small set of properties which determine the symbol $[\ ,\ )$ completely (Proposition \ref{711}, Proposition \ref{713}).

\subsection{$\Z_p$-extensions of function fields}

Let $K$ be a function field over a finite field $k$ of characteristic $p$. Using the theory from the previous paragraph, one can study $\Z_p$-extensions of $K$. This is the topic of Section \ref{s5}. In particular, using the conductors, one can compute the genus of curves in such towers (Theorem \ref{831}, compare to \cite{SCH1}). In particular, let $K_{\infty}/K$ be a $\Z_p$-extension with $\Z/p^n\Z$ sub-extensions $K_n$. Let $n_u$ be  maximal such that $K_{n_u}/K$ is unramified. Fix $\epsilon >0$. For $n$ large enough, one has (Corollary \ref{15b}), 
$$g(K_n) \geq \frac{p^{2(n-n_u)-1}}{3+\epsilon},$$ 
where $g(K_n)$ denote the genus of $K_n$. 
Our results also show that the previously obtained lower bound for the genus in the literature is incorrect: the $\epsilon$ cannot be dropped (Remark \ref{goa}). We also give criteria for towers for which the genus for large enough $n$ satisfies 
$$g(K_n)=a p^{2n}+b p^n+c$$ 
for some $a,b,c \in \Q$ (Proposition \ref{blaat}). These are the so-called genus stable towers. 
Classification of genus stable $\Z_p$-towers is completed in this paper, which was our 
main motivation. 
We hope that zeros of the zeta function in such a genus stable tower behave in a nice $p$-adic way \cite{WAN7}.

Finally, we specifically discuss $\Z_p$-extensions of $K=k(X)$. Just as in the local case, we manage to find a unique representative of each $\Z_p$-extension of $K$ and compute the conductors in all such towers.

\section{Prerequisites}

\subsection{Witt vectors}

For a detailed description, see \cite{THO}, \cite{RABI}, or follow the exercises from \cite[Chapter VI, Exercises 46-48]{LA}. We will give a brief summary which we will use as a black box. 

Let $p$ be a prime number. Let $R$ be a commutative ring with identity. We define the ring of $p$-typical Witt vectors $W(R)$ as follows. 

\begin{definition}
Let $\mathcal{C}$ be the category of commutative rings with identity. Then there is a unique functor $W: \mathcal{C} \to \mathcal{C}$ such that the following hold:
\begin{itemize}
\item For a commutative ring $R$, one has $W(R)=R^{\Z_{\geq 0}}$ as sets. \\
\item If $f: R \to S$ is a ring morphism, then the induced ring morphism satisfies $W(f) ((r_i)_i)=(f(r_i))_i$. \\
\item The map $g=(g^{(i)})_i: W(R) \to R^{\Z_{\geq 0}}$ defined by
\begin{align*}
(r_i)_i \to \left( \sum_{j=0}^i p^j r_j^{p^{i-j}} \right)_i.
\end{align*}
is a ring morphism (where $R^{\Z_{\geq 0}}$ has the product ring structure).
\end{itemize}
\end{definition}
It turns out that $W(\F_p)=\Z_p$. If $k$ is a finite field, then $W(k)$ is isomorphic to the ring of integers of the unramified field extension of $\Z_p$ with residue field $k$. 

The zero element of $W(R)$ is $(0,0,\ldots)$ and the identity element is $(1,0,0,\ldots)$. The above map $g$ is called the ghost map, and this map is an injection if $p$ is not a zero divisor in $R$. This ghost map, together with functoriality, determines the ring structure.
Furthermore, we have the Teichm\"uller map
\begin{align*}
[\ ]: R \to& W(R) \\
r \mapsto& (r,0,0,\ldots),
\end{align*}
This map is multiplicative: for $r,s \in R$ one has $[rs]=[r][s]$. 
Note that $g([r])=(r, r^p, r^{p^2},\ldots)$. 
We have the so-called Verschiebung group morphism
\begin{align*}
V: W(R) \to& W(R) \\
(r_0,r_1,r_2,\ldots) \mapsto& (0,r_0,r_1,r_2,\ldots). 
\end{align*}
We make $W(R)$ into a topological ring as follows. The open sets around $0$ are the sets of the form $V^i W(R)$. We call this the $V$-adic topology.  With this topology, $W(R)$ is complete and Hausdorff. Furthermore, a ring morphism $R \to S$ induces a continuous map $W(R) \to W(S)$. Any $r=(r_0,r_1,\ldots) \in W(R)$ can be written as $r=\sum_{i=0}^{\infty} V^i[r_i]$. For $n \in \Z_{\geq 0}$, the subgroup $V^n W(R)$ is an ideal, and we can consider the quotient ring of truncated Witt-vectors of length $n$:
\begin{align*}
W_n(R)= W(R)/V^n W(R),
\end{align*}
endowed with the discrete topology. For example, $W_n(\F_p)=\Z/p^n\Z$. 
We write $\pi_n: W(R) \to W_n(R)$ for the natural map (also for $m \geq n$ we consider $\pi_n: W_m(R) \to W_n(R)$). One has $W_1(R)=R$, meaning that in $W(R)$ one has
\begin{align*}
(r_0,\ldots) + (r_0',\ldots) = (r_0+r_0', \ldots), \\
(r_0,\ldots) \cdot (r_0',\ldots) = (r_0 \cdot r_0', \ldots).
\end{align*}
Sometimes we write $W_{\infty}(R)$ instead of $W(R)$. 

Now let us restrict to the case where $R=K$ is a field of characteristic $p$. The ring $W(K)$ has the subring $W(\F_p) = \Z_p$. Witt vectors $(x_0,x_1,\ldots) \in W(K)$ with $x_0 \neq 0$, have a multiplicative inverse (note that $W(K)$ is not a field, since $p$ is not invertible). The Frobenius map $x \mapsto x^p$ on $K$ induces a ring morphism 
\begin{align*}
F: W(K) \to& W(K) \\
(r_0,r_1,\ldots) \mapsto& (r_0^p,r_1^p,\ldots).
\end{align*}
 In fact, one has $V F = FV= \cdot p$ and hence we have an induced ring morphism $F: W_n(K) \to W_n(K)$. One also sees that $W(K)$ is a torsion-free $\Z_p$-module. Similarly, $W_n(K)$ is a torsion-free $\Z/p^n\Z$-module. Let $K'/K$ be a Galois extension and let $g \in G=\Gal(K'/K)$. Then we have a map $W(g): W(K') \to W(K')$. If $K'/K$ is finite, we define the following $W(K)$-linear trace map as follows
\begin{align*}
\mathrm{Tr}_{W(K')/W(K)}: W(K') \to& W(K) \\
x \mapsto& \sum_{g \in G} W(g)x. 
\end{align*}

\subsection{Pontraygin duality}

See \cite{MORR} for more details.

We call a topological group $G$ locally compact if $G$ is Hausdorff and if every point has a compact neighborhood. Let $\mathcal{C}$ be the category of locally compact groups with continuous group morphisms. We will now construct a duality on $\mathcal{C}$. 

We let $S^1 \cong \R/\Z$ be the circle topological group. We define the Pontryagin dual of $G$ to be 
\begin{align*}
G^{\vee} = \Hom_{\mathrm{cont}}(G, S^1),
\end{align*}
where $\mathrm{cont}$ means that the morphisms are continuous. 
We endow $G^{\vee}$ with the compact-open topology, that is, a subbasis of the topology is given by the sets of the form 
\begin{align*}
\{\chi \in G^{\vee}: \chi(K) \subseteq U,\ K \subseteq G \textrm{ compact},\ U \subseteq S^1 \textrm{ open}\}.
\end{align*}
This makes $G$ into a topological group. If $G$ is discrete, then the topology on $G^{\vee}$ agrees with the topology coming from the product $(S^1)^G$. Furthermore, if $G \to H$ is a continuous morphism, then we have an induced continuous morphism $H^{\vee} \to G^{\vee}$. This makes $^{\vee}: \mathcal{C} \to \mathcal{C}$ into a contravariant functor.

\begin{theorem}[Pontryagin Duality]\label{p1}
The functor $^{\vee}: \mathcal{C} \to \mathcal{C}$ is an anti-equivalence of categories with quasi-inverse $^{\vee}$.  It induces anti-equivalences between the subcategories
\begin{align*}
\{ G \in \mathcal{C}: G \textrm{ is } X \} \longleftrightarrow \{ G \in \mathcal{C}: G \textrm{ is } Y\}
\end{align*}
where 
\begin{align*}
(X,Y) \in \{&(\textrm{compact},\textrm{discrete}),  (\textrm{profinite}, \textrm{discrete torsion}), (\textrm{finite}, \textrm{finite}), \\
& (\textrm{finite cyclic}, \textrm{finite cyclic})  \}.
\end{align*}
Furthermore, for $G \in \mathcal{C}$ there is an inclusion reversing bijection
\begin{align*}
\{\textrm{closed subgroups of }G\} \longleftrightarrow& \{ \textrm{closed subgroups of }G^{\vee}\} \\
U \mapsto& U^{\perp}=\{\chi \in G^{\vee}: U \subseteq \mathrm{ker}(\chi) \} \\
V^{\perp}=\bigcap_{\chi \in V} \mathrm{ker}(\chi) \mapsfrom& V.
\end{align*}

\end{theorem}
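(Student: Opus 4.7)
The plan is to reduce Theorem \ref{p1} to showing that the canonical evaluation map $\mathrm{ev}_G: G \to (G^\vee)^\vee$, $g \mapsto (\chi \mapsto \chi(g))$, is an isomorphism of topological groups for every $G \in \mathcal{C}$. Once this is established, the subcategory equivalences and the closed-subgroup correspondence will follow from double duality together with standard quotient and extension arguments.

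The formal prerequisites come first: one checks that $\mathrm{ev}_G$ is a well-defined continuous homomorphism, and that $G^\vee$ itself lies in $\mathcal{C}$. Local compactness of $G^\vee$ is the only non-obvious part, and follows from an Arzel\`a--Ascoli argument: the set of characters sending a fixed compact identity neighborhood of $G$ into a small arc of $S^1$ is equicontinuous and pointwise bounded, hence relatively compact in the compact-open topology.

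The core of the argument is a stratification of the proof that $\mathrm{ev}_G$ is a homeomorphism. For $\Z/n\Z$ one writes down the self-duality $k \mapsto (l \mapsto e^{2\pi i k l/n})$ explicitly and verifies that the double dual is the identity; the structure theorem for finite abelian groups then yields the (finite, finite) and (finite cyclic, finite cyclic) subcategory equivalences simultaneously. For compact $G$ one expresses $G$ as a cofiltered limit of finite quotients (available in the abelian case via the quotients by intersections of kernels of characters, which can be shown to form a neighborhood basis of the identity); dualizing converts the limit into a directed colimit of finite groups. Together with the observation that $G^\vee$ is discrete when $G$ is compact -- because small neighborhoods of $1 \in S^1$ contain no nontrivial subgroup -- this delivers the (compact, discrete) equivalence and, by restriction to the totally disconnected case, the (profinite, discrete torsion) one. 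For general $G \in \mathcal{C}$ one invokes the Pontryagin--van Kampen structure theorem to write $G$ as an extension of a discrete group by a group of the form $\R^n \times K$ with $K$ compact, and reduces to self-duality of $\R$ (a direct computation using that every continuous homomorphism $\R \to S^1$ is of the form $t \mapsto e^{2\pi i s t}$), combined with exactness of $^\vee$ on short exact sequences admitting a continuous section.

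For the closed-subgroup bijection, closedness of $U^\perp$ is immediate from continuity, and the inclusion $U \subseteq (U^\perp)^\perp$ is tautological. The reverse inclusion for closed $U$ amounts to the separation statement that every $g \notin U$ is distinguished from $U$ by a continuous character, which reduces via the quotient $G/U \in \mathcal{C}$ to the fact that the continuous characters of any LCA group separate its points. The main obstacle is precisely this separation statement together with the structure-theoretic step in the last layer of the stratification: either the Pontryagin--van Kampen theorem or, alternatively, the Gelfand--Raikov theorem applied to $L^1(G)$ with its convolution structure is required, both resting essentially on the existence of Haar measure, and this is where the real depth of the theorem resides.
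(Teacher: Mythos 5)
The paper does not prove this theorem at all: it is stated as a black box and the reader is referred to Morris's book \cite{MORR}. So there is no paper proof to compare against; you are reconstructing one from scratch, and the only question is whether your reconstruction holds up.

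It mostly does, but there is one genuine error in the stratification. You assert that a compact abelian $G$ is a cofiltered limit of \emph{finite} quotients, with the intersections of kernels of characters forming a neighborhood basis of the identity. This is false in general and already fails for $G=S^1$: every nontrivial character of $S^1$ has finite cyclic kernel, so intersections of such kernels are never open (the circle has no small subgroups), and $S^1$ is certainly not an inverse limit of finite groups. The reduction to finite quotients is valid only for \emph{profinite} groups, i.e.\ it establishes the $(\textrm{profinite},\ \textrm{discrete torsion})$ row of the table but not the $(\textrm{compact},\ \textrm{discrete})$ one. For general compact abelian $G$ you instead need either the reduction to compact abelian \emph{Lie} groups $(S^1)^n\times F$ (via the no-small-subgroups theorem), after which the dual is $\Z^n\times F^\vee$ and the colimit argument does produce all discrete abelian groups, or a direct appeal to Haar measure and Peter--Weyl. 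You do correctly flag at the end that Haar measure and separation of points carry the real weight, so the architecture of the argument is sound; but as written the compact case conflates compact with profinite, and that is precisely the duality pair the paper later relies on (e.g.\ in Theorem \ref{15}), so the gap is not peripheral.
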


One has the natural isomorphism
\begin{align*}
G \to& \left( G^{\vee} \right)^{\vee} \\
g \mapsto& \left( \psi \mapsto \psi(g) \right).
\end{align*}

\section{$\Z_p$-extensions in characteristic $p$} \label{s2}

Fix a prime $p$ and let $K$ be a field of characteristic $p$. Note that $\Z/p\Z$-extensions of $K$ are given by solving equations of the form $X^p-X-a=0$ with $a \in K$ by Artin-Schreier theory. Below we develop Artin-Scheier-Witt theory which allows us to study $\Z/p^n\Z$-extensions for any $n$.

\subsection{Artin-Schreier-Witt extensions}

Let $K^{\mathrm{sep}}$ be a separable closure of $K$. Let $n \in \Z_{\geq 1}$. We define a group morphism 
\begin{align*}
\wp=F-\mathrm{id}: W_n(K^{\mathrm{sep}}) \to& W_n(K^{\mathrm{sep}}) \\
x \mapsto& Fx-x,
\end{align*}
with kernel $W_n(\F_p)$ (we suppress the $n$ in the notion for $\wp$). This map is surjective (one can see this by using induction on the length $n$).  For $a \in W_n(K)$ and $x=(x_0,\ldots,x_{n-1}) \in \wp^{-1}a \subset W_n(K^{\mathrm{sep}})$,  we set $K(\wp^{-1} a)=K(x_0,\ldots,x_{n-1})$. This extension does not depend on the choice of $x$. In fact, $K(\wp^{-1}a)=K(\wp^{-1}b)$ if $a \equiv b \pmod{ \wp W_n(K)}$. For a subset $B \subseteq W_n(K)$ we set $K(\wp^{-1}B)=K(\wp^{-1}b: b \in B)$. We similarly define $K(\wp^{-1}B)$ if $B$ is a subset of $W_n(K)/\wp W_n(K)$. Finally, we consider the projection morphisms $\pi_i: W_n(K)/\wp W_n(K) \to W_i(K)/\wp W_i(K)$ for $i  \leq n$. 
 
\begin{proposition}[Hilbert 90] \label{90}
Let $K'/K$ be a cyclic extension of degree $m$ with Galois group $G=\langle \sigma \rangle$. Then we have a short exact sequence
\begin{align*}
0 \to  W_n(K')/W_n(K) \overset{a \mapsto a-\sigma a}{\to}  W_n(K') \overset{\mathrm{Tr}_{W_n(K')/W_n(K)}}{\to} W_n(K) \to 0
\end{align*}
\end{proposition}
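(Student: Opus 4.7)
The plan is to verify three separate claims: (a) the kernel of $a\mapsto a-\sigma a$ on $W_n(K')$ equals $W_n(K)$, so that the map factors through $W_n(K')/W_n(K)$ as an injection; (b) the image of $a\mapsto a-\sigma a$ equals the kernel of $\mathrm{Tr}_{W_n(K')/W_n(K)}$; and (c) the trace is surjective.

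Claim (a) is immediate from the functoriality of $W$: the action of $\sigma$ on $W_n(K')$ is $W(\sigma)$, which is componentwise, so its fixed points are exactly the vectors all of whose entries lie in $(K')^{\langle\sigma\rangle}=K$. The containment $\mathrm{im}(1-\sigma)\subseteq\ker(\mathrm{Tr})$ in (b) is formal: $\mathrm{Tr}(a-\sigma a)=\mathrm{Tr}(a)-\mathrm{Tr}(\sigma a)=0$.

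For the nontrivial halves of (b) and (c) I would induct on $n$. The base case $n=1$ is the classical additive form of Hilbert 90 for a cyclic extension: $\ker(\mathrm{Tr}_{K'/K})=(1-\sigma)K'$ is the statement $H^1(G,K')=0$, and the trace is surjective because $K'/K$ is finite separable. For the inductive step, use the short exact sequence of $G$-modules
$$0 \to K' \overset{V^{n-1}}{\longrightarrow} W_n(K') \overset{\pi_{n-1}}{\longrightarrow} W_{n-1}(K') \to 0,$$
where $V^{n-1}$ embeds $K'$ as the last coordinate and $\pi_{n-1}$ is the truncation. Because $W(\sigma)$ acts componentwise, both $V^{n-1}$ and $\pi_{n-1}$ are $G$-equivariant, and since the trace is the sum of such component maps, both also commute with the trace. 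For (c), given $y\in W_n(K)$ the inductive hypothesis produces $z\in W_{n-1}(K')$ with $\mathrm{Tr}(z)=\pi_{n-1}(y)$; any lift $\tilde z\in W_n(K')$ satisfies $\pi_{n-1}(y-\mathrm{Tr}(\tilde z))=0$, so $y-\mathrm{Tr}(\tilde z)=V^{n-1}(a)$ for some $a\in K$, and the base case yields $b\in K'$ with $\mathrm{Tr}_{K'/K}(b)=a$; then $\mathrm{Tr}(\tilde z+V^{n-1}(b))=y$. For (b), if $w\in\ker(\mathrm{Tr})$ then the inductive hypothesis writes $\pi_{n-1}(w)=(1-\sigma)a'$, and lifting $a'$ to $\tilde a\in W_n(K')$ gives $w-(1-\sigma)\tilde a=V^{n-1}(b)$ for some $b\in K'$; taking the trace forces $\mathrm{Tr}_{K'/K}(b)=0$, so the base case produces $c\in K'$ with $b=(1-\sigma)c$, whence $w=(1-\sigma)(\tilde a+V^{n-1}(c))$.

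The only real obstacle is the bookkeeping: confirming that $V^{n-1}$ is $G$-equivariant, that it commutes with the trace, and that $\mathrm{im}(V^{n-1})=\ker(\pi_{n-1})$ inside $W_n(K')$. These are all formal consequences of $V$ being a shift operator and $W$ being functorial, but must be invoked cleanly at each step. Once this is done, the induction packages everything and the only genuine arithmetic input is classical Hilbert 90 at length one.
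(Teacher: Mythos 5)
Your proof is correct, and every step checks out: the identification of the fixed module with $W_n(K)$ (since $W(\sigma)$ acts componentwise), the formal containment $\mathrm{im}(1-\sigma)\subseteq\ker(\mathrm{Tr})$, the additivity and $G$-equivariance of $a\mapsto V^{n-1}[a]$ as a map $K'\to W_n(K')$, and its compatibility with the trace. The inductive step cleanly reduces both the surjectivity of the trace and the exactness in the middle to the $n=1$ case, and the $n=1$ case is precisely classical additive Hilbert 90 plus trace surjectivity for a finite separable extension.

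The paper itself offers no proof — it simply cites \cite[Chapter VI, Theorem 6.3]{LA} with the remark that the argument is ``similar.'' Lang's proof of the length-one statement is a direct construction: one picks $\theta\in K'$ with $\mathrm{Tr}_{K'/K}(\theta)=1$ and, given $b$ of trace zero, writes down an explicit $a$ (a weighted sum of partial Galois orbits of $b$ against $\sigma^i\theta$) satisfying $b=a-\sigma a$. Carrying that argument over to $W_n$ verbatim requires first knowing $\mathrm{Tr}_{W_n(K')/W_n(K)}$ is surjective, after which the same formula works at the Witt level. Your route is genuinely different: you isolate the exact sequence $0\to K'\to W_n(K')\to W_{n-1}(K')\to 0$ of $G$-modules and devolve everything, by induction, onto the classical $n=1$ statement. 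This is arguably cleaner, since it avoids redoing the explicit telescoping construction inside $W_n$ and instead invokes it once, in $K'$ itself. The cost is the bookkeeping you flag at the end (equivariance of $V^{n-1}$, compatibility with the trace, identification of $\ker\pi_{n-1}$), which you have, in fact, handled correctly. Either approach is a valid filling-in of the paper's citation.
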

\begin{proof}
Similar to the proof of \cite[Chapter VI, Theorem 6.3]{LA}.
\end{proof}

\begin{theorem} \label{11}
Let $K$ be a field of characteristic $p$. 
\begin{enumerate}
\item
Let $K'/K$ be a non-trivial cyclic Galois extension of degree $p^n$ with group $G=\langle \sigma \rangle$. Then there is an $a \in W_n(K)$ with $K'=K(\wp^{-1}a)$, and $\pi_1(a) \not \in \wp K$. 
\item
Let $a \in W_n(K)/\wp W_n(K)$. Then $K'=K(\wp^{-1}a)$ is Galois over $K$ and the maps 
\begin{align*}
\psi_1: \Gal(K'/K) \to& \Hom(\langle a \rangle, W_n(\F_p)) \\
g \mapsto&  \left(\wp b \mapsto g b- b \right)
\end{align*}
and
\begin{align*}
\psi_2: \langle a \rangle \to&  \Hom(\Gal(K'/K),W_n(\F_p)) \\
\wp b \mapsto& \left(g \mapsto gb-b \right)
\end{align*}
are well-defined isomorphisms. Set $r=\max\{m \in \{0,1,\ldots,n\}: \pi_m(a)=0\}$ and $s=n-r$.
Then one has (non-canonical) isomorphisms
\begin{align*}
\Z/p^s\Z \cong \Gal(K'/K)  \cong \Hom(\langle a \rangle, W_n(\F_p)) \cong \langle a \rangle \cong \Hom(\Gal(K'/K),W_n(\F_p)) .
\end{align*}
\end{enumerate}
\end{theorem}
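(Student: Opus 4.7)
The plan is to handle (ii) first, since once the Galois-theoretic picture is in place, (i) follows from Hilbert 90 (Proposition \ref{90}). For (ii), the key observation is that for any lift $a \in W_n(K)$ and any $b \in W_n(K^{\sep})$ with $\wp b = a$, every $\tau \in \Gal(K^{\sep}/K)$ satisfies $\wp(\tau b - b) = \tau a - a = 0$, so $\tau b - b \in \ker(\wp) = W_n(\F_p) \subseteq W_n(K)$. Hence $K' = K(b_0, \ldots, b_{n-1})$ is stable under $\Gal(K^{\sep}/K)$, so $K'/K$ is Galois. The same identity shows the pairing $B(g, \wp b) := gb - b$ on $\Gal(K'/K) \times \langle a \rangle$ takes values in $W_n(\F_p)$, is independent of the choice of preimage $b$ (replacing $b$ by $b+c$ with $c \in W_n(\F_p)$ preserves the difference), and is independent of the representative of $\wp b$ modulo $\wp W_n(K)$. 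It is bilinear since $g_1$ acts trivially on $g_2 b - b \in W_n(\F_p)$, giving $(g_1 g_2)b - b = (g_1 b - b) + (g_2 b - b)$; this realizes $\psi_1$ and $\psi_2$ as group homomorphisms.

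Injectivity is immediate: $\psi_1(g) = 0$ forces $g$ to fix every $b_i$ and hence all of $K'$; $\psi_2(\wp b) = 0$ forces $b \in W_n(K')^{\Gal(K'/K)} = W_n(K)$, so $\wp b \in \wp W_n(K)$. To upgrade both injections to isomorphisms I would use a size argument. First, $\langle a \rangle$ is cyclic of order $p^s$: choosing a representative with $\pi_r a = 0$ (always possible, by subtracting an appropriate $\wp$-image), we have $a \in V^r W_n(K) \setminus V^{r+1} W_n(K)$, and $VF = FV = p$ together with $V^n = 0$ on $W_n$ give $p^s a = V^s F^s a = 0$ while $p^{s-1} a = V^{n-1}[a_r^{p^{s-1}}] \neq 0$. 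To see the class of $p^{s-1} a$ is nonzero in the quotient, a short recursive peeling---whenever $\wp d$ has first coordinate zero one may write $d = [d_0] + V d''$ with $d_0 \in \F_p$, so $\wp d = V \wp d''$---reduces the equation $p^{s-1} a \in \wp W_n(K)$ to $a_r^{p^{s-1}} \in \wp K$. The same peeling applied to the hypothesis $\pi_{r+1}(a) = V^r[a_r] \notin \wp W_{r+1}(K)$ yields $a_r \notin \wp K$, and the identity $F \wp = \wp F$ on $K$ combined with injectivity of $F$ then forces $a_r^{p^{s-1}} = F^{s-1}(a_r) \notin \wp K$, a contradiction. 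Hence $\langle a \rangle \cong \Z/p^s\Z$, so $\Hom(\langle a \rangle, W_n(\F_p)) \cong \Z/p^s\Z$ is cyclic; $\psi_1$ embeds $\Gal(K'/K)$ into this cyclic group of order $p^s$, while $\psi_2$ embeds $\Z/p^s\Z$ into $\Hom(\Gal(K'/K), W_n(\F_p))$, and these two bounds sandwich $|\Gal(K'/K)| = p^s$, so all four maps are isomorphisms.

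For (i), let $G = \langle \sigma \rangle$. Since $p^n$ annihilates $W_n(K)$, we have $\mathrm{Tr}_{W_n(K')/W_n(K)}(-1) = -p^n = 0$, so Hilbert 90 yields $b \in W_n(K')$ with $b - \sigma b = -1$, i.e.\ $\sigma b = b + 1$. The translates $\sigma^i b = b + i$ for $0 \leq i < p^n$ are distinct in $W_n(K')$ (their images in $W_n(\F_p) = \Z/p^n\Z$ are), so the $G$-stabilizer of $b$ is trivial and $K(b_0, \ldots, b_{n-1}) = K'$. Using $F(1) = 1$, one computes $\sigma(\wp b) = F(b+1) - (b+1) = \wp b$, so $a := \wp b \in W_n(K)$ and $K' = K(\wp^{-1} a)$. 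Finally, $\pi_1(a) = b_0^p - b_0$; if this equalled $c^p - c$ for some $c \in K$, then $b_0 - c \in \F_p \subseteq K$ would give $b_0 \in K$, contradicting $\sigma b_0 = b_0 + 1 \neq b_0$.

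The main obstacle is the equality $|\langle a \rangle| = p^s$ in (ii): one must carefully choose the lift (so that $\pi_r a = 0$), carry out the recursive Witt-vector peeling to reduce an obstruction at level $n$ to the field-level statement $a_r \notin \wp K$, and then transfer this via $F \wp = \wp F$ to rule out $a_r^{p^{s-1}} \in \wp K$. All other steps are either formal manipulations with the Verschiebung and Frobenius, or a direct adaptation of the Artin--Schreier argument using Hilbert 90.
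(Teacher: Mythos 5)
Your proof is correct, and the overall architecture (Galois descent of the pairing, injectivity of $\psi_1,\psi_2$, size comparison) is sound. But your route through part (ii) is genuinely different from the paper's. The paper establishes surjectivity of $\psi_1$ by a two-row commutative diagram: project everything to level $r+1$, observe that $K(\wp^{-1}\pi_{r+1}(a))/K$ is an honest degree-$p$ Artin--Schreier extension, conclude the bottom horizontal map is an isomorphism, and lift this back up using the fact that any subgroup of $\Z/p^s\Z$ surjecting onto $\Z/p\Z$ is everything. That reduction to the degree-$p$ case sidesteps most of the Witt-vector bookkeeping. You instead compute $|\langle a\rangle|=p^s$ directly by choosing a representative with $\pi_r a=0$, running the Verschiebung-peeling argument, and then sandwich $|\Gal(K'/K)|$ between $|\langle a\rangle|$ and $|\Hom(\Gal(K'/K),W_n(\F_p))|$; the paper gets $\psi_2$ for free as the Pontryagin dual of $\psi_1$. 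Both are valid; the paper's diagram argument is shorter, while your version is more self-contained and spells out the order computation that the paper only gestures at (``one can read off the order of $a$ as stated''). For (i) you also give a direct argument that $\pi_1(a)\notin\wp K$ (via $\sigma b_0=b_0+1$), where the paper defers this to (ii) via the degree count; your version is cleaner there.

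One small imprecision worth fixing: you justify $a_r\notin\wp K\Rightarrow a_r^{p^{s-1}}\notin\wp K$ by appealing to ``$F\wp=\wp F$ combined with injectivity of $F$.'' What you actually need is that $F$ induces an \emph{injection on the quotient} $K/\wp K$, not merely that $F:K\to K$ is injective, and that requires a one-line argument you haven't supplied: if $b^p=c^p-c$ then $(c-b)^p=c$, so $b=(c-b)-(c-b)^p=-\wp(c-b)\in\wp K$. Also note that your peeling step, as phrased (``reduces the equation $p^{s-1}a\in\wp W_n(K)$ to $a_r^{p^{s-1}}\in\wp K$''), hides a small subtlety: after one application of $\wp d=V\wp d''$ one has $V\wp d''=V^{n-1}[a_r^{p^{s-1}}]$ in $W_n$, and $V$ is not injective on $W_n$, so the recursion must be carried through modulo the kernel $V^{n-1}W_n(K)$ at each stage (or, more cleanly, by reducing $d$ modulo $W_n(\F_p)+V^{n-1}W_n(K)$ in a single step). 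The conclusion is correct, but the phrasing slightly understates the bookkeeping.
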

\begin{proof}
i: Consider the Witt vector $1 \in W_n(K)$ with $\mathrm{Tr}_{W_n(K')/W_n(K)}(1)=p^n=0$. By Theorem \ref{90} there is an $\alpha=(a_0,\ldots,a_{n-1}) \in W_n(K')$ with $1= \alpha - \sigma \alpha$. One finds $K'=K(\alpha)=K(a_0,\ldots,a_{n-1})$. Furthermore, one has 
\begin{align*}
\sigma \wp \alpha = \sigma (F \alpha - \alpha) = F( \sigma \alpha) - \sigma \alpha=F(\alpha+1)-\alpha+1 = F \alpha - \alpha = \wp \alpha.
\end{align*}
Hence we have $\wp \alpha \in W_n(K)$. The last statement follows from the proof in ii.

ii: Note that on $W_n(K)/\wp W_n(K)$, $\cdot p=V$. Hence one can read off the order of $a$ as stated, and we find $\langle a \rangle \cong \Hom(\langle a \rangle, W_n(\F_p)) \cong \Z/p^s\Z$.  It is obvious that $K(\wp^{-1} a)/K$ is Galois, say with group $G$, and that $\psi_1$ is a well-defined injective group morphism. If $r=n$, the result follows easily. Assume $r<n$. Consider the following commutative diagram:
\begin{displaymath}
    \xymatrix{ G \ar@{^(->}[r] \ar@{->>}[d] & \Hom(\langle a \rangle, W_n(\F_p)) \cong \Z/p^s\Z \ar@{->>}[d] \\
               G_{r+1}=\Gal(K(\wp^{-1} \pi_{r+1}(a))/K) \ar@{^(->}[r]&  \Hom(\langle \pi_{r+1}(a) \rangle, W_{r+1}(\F_p)) \cong \Z/p\Z.  }
\end{displaymath}
We claim that the second horizontal map is an isomorphism. Indeed, the extension $K(\wp^{-1} \pi_{r+1}(a))/K$ is given by a single Artin-Schreier equation of the form $x^p-x-b$ which has no roots in $K$ and hence has degree $p$ over $K$. Since $G_{r+1}$ and $\Hom(\langle \pi_{r+1}(a) \rangle, W_{r+1}(\F_p))$ have the same order, the map is an isomorphism. Hence the map $G \to \Z/p\Z$ is surjective. The map $\Z/p^s\Z \to \Z/p\Z$ is the natural projection map, and it follows that the map $G \to \Z/p^s\Z$ is surjective. We conclude that the first horizontal map is an isomorphism. 

The map $\psi_2$ is the Pontryagin dual of the isomorphism $\psi_1$, and hence it is an isomorphism (Theorem \ref{p1}).
\end{proof}

In fact, one easily sees if $\wp^{-1}a=(b_0,b_1,\ldots,b_{n-1})$, then $K(\wp^{-1}a)=K(b_{n-1})$. Indeed, $b_{n-1}$ does not belong to any of the subfields of the extension.

\begin{remark}
Similarly, one can study $W(\F_p)^*=\Z_p^*$-extensions by considering $W(K)^*$ and using the map $x \mapsto Fx/x$. 
\end{remark}

\subsection{Artin-Schreier-Witt theory}

We will now give a more topological interpretation of the above results, with the help of Pontryagin duality. This also allows us to understand any abelian extension of exponent $p^n$. 

Let $M$ be a $\Z_p$-module. We make $M$ into a topological group by picking a basis around $0$ consisting of the sets of the form $p^i M$. We call this topology the $p$-adic topology. Any group morphism between $\Z_p$-modules is automatically continuous. 
Let $H$ be an abelian group. We define its $p$-adic completion, a $\Z_p$-module, to be 
\begin{align*}
\widehat{H}=\underset{\underset{i}{\leftarrow}}\lim\ H/p^iH.
\end{align*}
Let $M/K$ be an Galois extension with Galois group $H=\Gal(M/K)$. We endow $H$ with the Krull topology as follows. If $L/K$ is finite Galois subextension, we endow $\Gal(L/K)$ with the discrete topology. We endow 
\begin{align*}
H= \underset{\underset {L:\ L/K \textrm{ finite}}  \leftarrow}\lim  \Gal(L/K)
\end{align*}
with the induced topology coming from the product topology on the product group $\prod_{L:\  L/K \textrm{ finite}} \Gal(L/K)$. This makes $H$ into a compact topological group. If $H$ is abelian, we consider the quotient 
\begin{align*}
H_p= \underset{\underset {L:\ L/K \textrm{ finite } p-\textrm{power order}}  \leftarrow}\lim  \Gal(L/K)= \underset{\underset {i}  \leftarrow}\lim\ H/p^iH,
\end{align*}
with the Krull topology (this does not coincide with the $p$-adic topology usually, see Remark \ref{r1} below).

Let $n \in \Z_{\geq 1} \sqcup \{\infty\}$. We endow $W_n(K)$ with the $V$-adic topology, that is, a basis of open sets around $0$ are of the form $V^i W_n(K)$. If $K$ is perfect, one has $V^i W_n(K)=p^i W_n(K)$ and this is just the $p$-adic topology. If $n$ is finite, $W_n(K)$ has the discrete topology. Note that $\wp W_n(K)$ is a closed subgroup. The group $W_n(K)/\wp W_n(K)$ becomes a topological group. Note that $V=VF=\cdot p$ on the quotient, and hence the $\Z_p$-module $W_n(K)/\wp W_n(K)$ always has the $p$-adic topology.

\begin{theorem} \label{15}
Let $K$ be a field of characteristic $p$ with group $G=\Gal(K^{\mathrm{ab}}/K)$. Let $n \in \Z_{\geq 1}$. Then we have a well-defined perfect pairing
\begin{align*}
(\ ,\ ): G/p^nG \times W_n(K)/\wp W_n(K) \to& W_n(\F_p) \\
(g, \wp a \pmod{W_n(K)}) \mapsto& g a- a 
\end{align*}
and the maps
\begin{align*}
\varphi_1: G/p^nG \to& \Hom(W_n(K)/\wp W_n(K) , W_n(\F_p)) \\
g \mapsto& \left( \wp a \pmod{W_n(K)} \mapsto g a- a \right)
\end{align*}
and
\begin{align*}
\varphi_2: W_n(K)/\wp W_n(K) \to& \Hom_{\mathrm{cont}}(G/p^nG,W_n(\F_p)) \\
\wp a \mapsto& \left( g \mapsto ga- a \right)
\end{align*}
are well-defined homeomorphism of topological groups. The same statements remain true if we set $n = \infty$ and replace $G/p^nG$ by $G_p$. 
\end{theorem}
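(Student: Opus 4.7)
The plan is to promote Theorem \ref{11}, which handles a single cyclic Artin-Schreier-Witt extension, to the full abelian pro-$p^n$ quotient of $G$, and then to deduce the statement about $\varphi_1$ from the one about $\varphi_2$ by Pontryagin duality (Theorem \ref{p1}). First I would verify that the pairing is well-defined. Given $g \in G$ and a representative $a \in W_n(K^{\sep})$ with $\wp a \in W_n(K)$, one computes
\[
\wp(ga - a) = g(\wp a) - \wp a = 0,
\]
so $ga - a \in \ker(\wp) = W_n(\F_p)$; changing $a$ by an element of $W_n(\F_p)$, which $g$ fixes, does not affect $ga - a$. If the class of $\wp a$ is zero, one can take $a \in W_n(K)$ and the pairing vanishes. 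By Theorem \ref{11}(ii), $K(\wp^{-1}(\wp a))/K$ is cyclic of $p$-power order dividing $p^n$, so $p^n G$ acts trivially on $a$ and the pairing descends through $G/p^n G$. The maps $\varphi_1$ and $\varphi_2$ are the two adjoints of this pairing, and $\varphi_2$ lands in continuous homomorphisms because each single class cuts out a finite subextension of $K^{\ab}$.

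Next I would show that $\varphi_2$ is a group isomorphism. For injectivity, if the class of $\wp a$ is killed, then every $g \in G$ fixes $a$, so $a$ lies in the $G$-invariants of $W_n(K^{\ab})$, which equals $W_n(K)$, and the class is zero. For surjectivity, a continuous $\chi \colon G/p^n G \to W_n(\F_p)$ cuts out a finite cyclic subextension $L/K$ of $K^{\ab}$ of some degree $p^s$ with $s \leq n$. Theorem \ref{11}(i) realizes $L = K(\wp^{-1} a)$ for some $a \in W_n(K)$, and Theorem \ref{11}(ii) then identifies $\langle a \rangle \subset W_n(K)/\wp W_n(K)$ with $\Hom(\Gal(L/K), W_n(\F_p))$ via exactly the recipe defining $\varphi_2$; since $\chi$ factors through $\Gal(L/K)$, some multiple of $a$ represents it.

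For the topological statement with $n$ finite, both sides are in fact discrete: on $W_n(K)/\wp W_n(K)$ one has $V \equiv p \pmod{\wp W_n(K)}$ (since $\wp V = p - V$) and $V^n = 0$ on $W_n(K)$, so $p^n = 0$ on the quotient and the $p$-adic topology is discrete, while $\Hom_{\mathrm{cont}}(G/p^n G, W_n(\F_p))$ is discrete as a group of maps into a finite discrete target. Any bijective group homomorphism between discrete groups is automatically a homeomorphism. The analogous assertion for $\varphi_1$ now follows by dualizing $\varphi_2$ with Theorem \ref{p1} and using the reflexivity of the profinite group $G/p^n G$.

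For $n = \infty$, take the inverse limit over the finite-$n$ statements already proved: $W(K)/\wp W(K) = \varprojlim_n W_n(K)/\wp W_n(K)$ with the $p$-adic topology, $G_p = \varprojlim_n G/p^n G$ with the Krull topology, and $\Z_p = W(\F_p) = \varprojlim_n W_n(\F_p)$, and the compatibility of the pairings with the projections $\pi_n$ gives the desired limit pairing with the right topological properties. The main obstacle is the surjectivity of $\varphi_2$: every continuous $p^n$-torsion character of $G$ must be produced from an explicit Artin-Schreier-Witt class, which rests squarely on the existence half of Theorem \ref{11}(i) together with the observation that every such character factors through a finite cyclic subextension. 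A subsidiary point is to confirm that the $p$-adic topology on $W(K)/\wp W(K)$ and the compact-open topology on the character group agree with the inverse-limit topologies in the case $n = \infty$, where neither side is discrete.
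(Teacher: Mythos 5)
Your proposal is correct and follows essentially the same path as the paper: use Theorem \ref{11} (surjectivity of $\wp$, realization of cyclic $p$-power extensions via Witt vectors, and the explicit duality $\langle a \rangle \cong \Hom(\Gal(K(\wp^{-1}a)/K), W_n(\F_p))$) to establish that one of $\varphi_1,\varphi_2$ is a group isomorphism, note both sides are discrete for finite $n$ so it is automatically a homeomorphism, obtain the other map by Pontryagin duality (Theorem \ref{p1}), and pass to the inverse limit over $n$ for the $n=\infty$ case. Your extra care about well-definedness (checking $\wp(ga-a)=0$ and that $p^nG$ acts trivially) and the Galois-descent argument for injectivity of $\varphi_2$ are slightly more explicit than the paper's ``it is not hard to see'' phrasing, but the approach is the same.
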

\begin{proof} 
It is not hard to see that $(\ ,\ )$ is well-defined group-morphism and by Theorem \ref{11}, the pairing is non-degenerate, that is, the left and right kernel are trivial. 

Now let $n \in \Z_{\geq 1}$. It is not hard to see that the pairing induces an injective group morphism $\varphi_1$. We will now show that the map is surjective. Let $\chi \in \Hom_{\mathrm{cont}}(G/p^nG,W_n(\F_p))$. By Theorem \ref{11} it follows that there is an $a \in W_n(K)/\wp W_n(K)$ such that $\chi$ factors through 
\begin{align*}
\chi: G/p^nG \to G'=\Gal(K(\wp^{-1}a)/K) \to W_n(\F_p)
\end{align*}
(Theorem \ref{11}i). By Theorem \ref{11}ii we have an isomorphism $\langle a \rangle \cong \Hom(G',W_n(\F_p))$. Hence there is a multiple of $a$ which maps to $\chi$.
Note that $W_n(K)/\wp W_n(K) $ has the discrete topology, and so has $(G/p^nG)^{\vee}$ by Pontryagin Duality (Theorem \ref{p1}). Hence $\varphi_1$ is a homeomorphism. By Theorem \ref{p1}) the dual map $\varphi_2$ is a homeomorphism. 

Taking the projective limit, gives isomorphisms
\begin{align*}
W(K)/\wp W(K) \cong& \lim_{\underset{n}{\leftarrow}} W_n(K)/\wp W_n(K) \cong \lim_{\underset{n}{\leftarrow}} \Hom_{\mathrm{cont}}(G/p^nG,W_n(\F_p)) \\
\cong& \Hom_{\mathrm{cont}}(G_p,W(\F_p))
\end{align*}
and
\begin{align*}
G_p \cong \lim_{\underset{n}{\leftarrow}} \Hom( W_n(K)/\wp W_n(K), W_n(\F_p)) \cong \Hom ( W(K)/\wp W(K), W(\F_p)).
\end{align*}
It is not hard to see that these maps are in fact homeomorphisms.
\end{proof}

\begin{remark}
We have chosen not to give a cohomological approach. Let us briefly sketch the cohomological approach. Consider the exact sequence 
$$0 \to W_n(\F_p) \to W_n(K^{\mathrm{sep}}) \overset{\wp}{\to} W_n(K^{\mathrm{sep}}) \to 0$$
of $G=\Gal(K^{\mathrm{sep}}/K)$-modules. Taking $G$-invariants gives us an exact sequence
\begin{align*}
0 \to W_n(\F_p) \to W_n(K) \overset{\wp}{\to} W_n(K) \to H^1(G, W_n(\F_p)) \to H^1(G,W_n(K)) \to \ldots
\end{align*}
Since $W_n(\F_p)$ has trivial $G$-action, $H^1(G, W_n(\F_p))=\Hom_{\mathrm{cont}}(G,W_n(\F_p))$. We claim $H^1(G,W_n(K))=0$.  This follows essentially from Proposition \ref{90}. Hence we obtain $W_n(K)/\wp W_n(K) \cong \Hom_{\mathrm{cont}}(G,W_n(\F_p))$.
\end{remark}

One can then apply the Pontryagin duality theorem to prove the following result. We fix a separable  closure $\overline{K}$ of $K$. 

\begin{theorem} \label{1800}
Let $K$ be a field of characteristic $p$. Let $n \in \Z_{\geq 1}$. Then there is an inclusion preserving bijection
\begin{align*}
\left\{ \textrm{subgroups of }W_n(K)/\wp W_n(K) \right\} \longleftrightarrow& \left\{ K\subseteq L \subseteq \overline{K}: \Gal(L/K) \textrm{ is a }\Z/p^n\Z\textrm{-module}\right\}\\
B \mapsto& K(\wp^{-1}B) \\
\wp W_n(L) \cap W_n(K)/\wp W_n(K) \mapsfrom& L.
\end{align*}
Furthermore, for a subgroup $B$ of $W_n(K)/\wp W_n(K)$ we have a natural commutative diagram 
\begin{align*}
    \xymatrix{ G/p^nG \ar[r]^{\sim \ \ \ } \ar@{->>}[d] & \ar@{->>}[d] \Hom(W_n(K)/\wp W_n(K),W_n(\F_p)) \\
               \Gal(K(\wp^{-1}B)/K) \ar[r]^{\sim} & \Hom(B,W_n(\F_p)). }
\end{align*}
If $B$ is finite, one has $|B|=[K(\wp^{-1}B):K]$. Finally, for $0 \leq i \leq n$ one has, where $H=\Gal(K(\wp^{-1}B)/K)$:
\begin{align*}
    \xymatrix{ K(\wp^{-1}B) \ar@{-}^{p^iH}[d]  \\
               K \left(\wp^{-1} \left( B \cap p^{n-i} (W_n(K)/\wp W_n(K)) \right) \right)  \ar@{-}^{H/p^iH}[d] \\
K.
 }
\end{align*}

\end{theorem}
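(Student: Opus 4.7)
The plan is to deduce the theorem from the isomorphism $\varphi_2$ of Theorem~\ref{15} combined with Pontryagin duality (Theorem~\ref{p1}) and the infinite Galois correspondence. Write $M := W_n(K)/\wp W_n(K)$, a discrete $\Z/p^n\Z$-module, and let $L_0 \subseteq K^{\mathrm{ab}}$ be the fixed field of $p^nG$, so $\Gal(L_0/K) = G/p^nG$ and intermediate fields of $L_0/K$ are precisely those $L \supseteq K$ with $\Gal(L/K)$ a $\Z/p^n\Z$-module. Because $M$ is discrete, every subgroup is closed. The isomorphism $\varphi_2$ realizes $M$ as the Pontryagin dual of $G/p^nG$, and Theorem~\ref{p1} then produces an inclusion-reversing bijection $B \mapsto B^{\perp}$ between subgroups of $M$ and closed subgroups of $G/p^nG$. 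Composing with the inclusion-reversing Galois correspondence for $L_0/K$ yields the inclusion-preserving bijection of part (1). For the explicit formulas I would unwind the pairing: $g \in G/p^nG$ annihilates $B$ iff $ga = a$ for every $a$ with $\wp a \in B$, iff $g$ fixes $K(\wp^{-1}B)$; dually, $\wp a \in M$ is trivial on $\Gal(\overline{K}/L)$ iff $a \in W_n(L)$, yielding the inverse $L \mapsto (\wp W_n(L) \cap W_n(K))/\wp W_n(K)$.

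The commutative diagram in part (2) then follows from functoriality: the top row is $\varphi_1$ from Theorem~\ref{15}, the right vertical arrow is restriction of characters to $B$ (surjective by self-injectivity of $W_n(\F_p) = \Z/p^n\Z$ among $\Z/p^n\Z$-modules), the left vertical arrow is the Galois-theoretic quotient $G/p^nG \twoheadrightarrow \Gal(K(\wp^{-1}B)/K)$, and commutativity is built in. When $B$ is finite, self-duality of finite $\Z/p^n\Z$-modules gives $|\Hom(B, W_n(\F_p))| = |B|$, whence $[K(\wp^{-1}B):K] = |B|$, which is part (3).

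For the tower statement, set $B_i := B \cap p^{n-i}M$. Applying part (1) inside $K(\wp^{-1}B)$ identifies $\Gal(K(\wp^{-1}B)/K(\wp^{-1}B_i)) \subseteq H$ with $\Ann(B_i) := \{\chi \in \Hom(B, W_n(\F_p)) : \chi|_{B_i} = 0\}$, reducing the claim to $\Ann(B_i) = p^iH$. The perfect pairing $B \times H \to W_n(\F_p)$ from part (2) is $\Z/p^n\Z$-linear in $H$, which gives $(B[p^i])^{\perp} = p^iH$ by biduality, so it suffices to identify $B_i$ with $B[p^i]$. The inclusion $B_i \subseteq B[p^i]$ is immediate from $p^nM = 0$. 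The reverse inclusion, which I expect to be the main obstacle, rests on the structural identity $M[p^i] = p^{n-i}M$ inside $M$; this should follow from the fact that $p = V$ on $M$ together with the Witt-vector kernel-image relation $\ker(V^i|_{W_n(K)}) = V^{n-i}W_n(K)$, transferred through the quotient by $\wp W_n(K)$, and expresses the fact that $M$ is a free $\Z/p^n\Z$-module. Granting this, $B[p^i] \subseteq M[p^i] = p^{n-i}M$, so $B[p^i] \subseteq B \cap p^{n-i}M = B_i$, completing the proof.
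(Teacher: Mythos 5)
Your proposal is correct and follows the paper's high-level strategy: realize $M := W_n(K)/\wp W_n(K)$ as the Pontryagin dual of $G/p^nG$ via Theorem~\ref{15}, transfer subgroups across the duality, and compose with the Galois correspondence. Two points are worth flagging.

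First, for the commutative diagram the paper establishes surjectivity of $B \to \Hom(\Gal(L/K),W_n(\F_p))$ by hand (factoring characters through cyclic quotients via Theorem~\ref{11}) and then dualizes; you instead argue that the restriction $\Hom(M,W_n(\F_p)) \to \Hom(B,W_n(\F_p))$ is surjective because $\Z/p^n\Z$ is self-injective. That is a clean conceptual replacement and is perfectly valid; note that the same fact is what underwrites your claim $(B[p^i])^{\perp} = p^iH$, so "biduality" alone is slightly imprecise there — the extension step in that argument is exactly the self-injectivity you already invoked.

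Second, and more substantively: your derivation of $M[p^i]=p^{n-i}M$ by "transferring" $\ker(V^i\vert_{W_n(K)})=V^{n-i}W_n(K)$ through the quotient by $\wp W_n(K)$ does not work as stated. Quotients do not commute with kernels, and because Witt-vector addition is not coordinatewise, showing directly that $V^i x\in\wp W_n(K)$ forces $x\in V^{n-i}W_n(K)+\wp W_n(K)$ is not straightforward. The correct justification, which you gesture at in your final clause, is Proposition~\ref{182}: it shows $M\cong\bigoplus_{\mathfrak{B}}W_n(\F_p)$ is a free $\Z/p^n\Z$-module (the $p$-adic completion being the identity for finite $n$ since $p^n$ kills everything), and for a free $\Z/p^n\Z$-module the identity $M[p^i]=p^{n-i}M$ is immediate. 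With that reference in place your proof of the tower statement is complete, and it is in fact considerably more detailed than the paper's terse "follows from Theorem~\ref{11}."
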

\begin{proof}
Let $K_n$ be the maximal abelian extension of exponent $p^n$. By Theorem \ref{p1}, (closed) subgroups of $W_n(K)/\wp W_n(K)$ correspond to closed subgroups of $G/p^nG$. In this case, $B$ corresponds to $\{g \in G/p^nG: gb=b, b \in \wp^{-1}B\}=\Gal(K_n/K(\wp^{-1}B))$ and hence to $K(\wp^{-1}B)$ by Galois theory. This gives us the inclusion preserving bijection. It is not hard to see that an extension $L/K$ of exponent dividing $p^n$, corresponds to $\wp W_n(L) \cap W_n(K)/\wp W_n(K)$. 

Let $L=K(\wp^{-1}B)$. We have a natural injective map 
\begin{align*}
B \to \Hom_{\mathrm{cont}}(\Gal(L/K),W_n(\F_p))
\end{align*}
between two groups with the discrete topology. Take $\chi \in \Hom_{\mathrm{cont}}(\Gal(L/K),W_n(\F_p))$. Then $\chi$ factors as 
\begin{align*}
\chi: \Gal(L/K) \to \Gal(K(\wp^{-1}a)/K) \to W_n(\F_p)
\end{align*}
for some $a \in W_n(K)$ (Theorem \ref{11}). By the previous statement, it follows $a \in B$. By Theorem \ref{15}ii we have an isomorphism $\langle a \rangle \cong \Hom(G',W_n(\F_p))$ and hence there is a multiple of $a$ which maps to $\chi$.  This shows that the map is surjective and hence a homeomorphism. By Pontryagin duality we see that the map $\Gal(L/K) \to \Hom(B,W_n(\F_p))$ is a homeomorphism as required. It is easy to see that the diagram commutes. The first vertical map is surjective, and we have seen before that the horizontal maps are isomorphisms (Theorem \ref{15}). Hence it follows that the right vertical map is surjective. The statement about the cardinalities now follows easily. 

The last statement follows from Theorem \ref{11}. 
\end{proof}

Since $V$ commutes with $\wp$, the injective group morphism $V: W_i(K) \to W_{i+1}(K)$ induces an injective group morphism $V: W_i(K)/\wp W_i(K) \to W_{i+1}(K)/\wp W_{i+1}(K)$ with image $p \left(W_{i+1}(K)/\wp W_{i+1}(K) \right)$. 

\begin{corollary} \label{1230}
Let $K$ be a field of characteristic $p$. 
Let $S$ be the set of sequences $(B_1,B_2,\ldots)$ with $B_i \subseteq W_i(K)/\wp W_i(K)$ a subgroup with $VB_i=B_{i+1} \cap p \left( W_{i+1}(K)/\wp W_{i+1}(K) \right)$. We say that $(B_1,B_2,\ldots) \subseteq (B_1',B_2',\ldots)$ if $B_i \subseteq B_i'$ for all $i$. Then there is an inclusion preserving bijection 
\begin{align*}
S \longleftrightarrow&  \left\{K \subseteq L \subseteq \overline{K}: \Gal(L/K) \textrm{ is a } \Z_p\textrm{-module}\right\} \\
(B_1,B_2,\ldots) \mapsto& \underset{\underset{i}{\rightarrow}}\lim\ K(\wp^{-1}B_i)= K(\wp^{-1}B_1,\wp^{-1}B_2,\ldots) \\
\left(\wp W_i(L) \cap W_i(K)/\wp W(K)\right)_i \mapsfrom& L.
\end{align*}
Furthermore, for $(B_i)_i \in S$ we have a commutative diagram for $n \in \Z_{\geq 1}$: 
\begin{align*}
    \xymatrix{ G_p \ar[r]^{\sim\ \ \ } \ar@{->>}[d] & \ar@{->>}[d] \Hom(W(K)/\wp W(K),W(\F_p)) \\
               H=\Gal(\underset{\underset{i}{\rightarrow}}\lim\ K(\wp^{-1}B_i)/K) \ar[r]^{\sim} \ar@{->>}[d] & \underset{\underset{i}{\leftarrow}}\lim \Hom(B_i,W_i(\F_p)) \ar@{->>}[d] \\
H/p^iH=\Gal(K(\wp^{-1}B_i)/K) \ar[r]^{\sim} & \Hom(B_i ,W_i(\F_p)). }
\end{align*}
Here $\underset{\underset{i}{\leftarrow}}\Hom(B_i,W_i(\F_p))$ is the set of morphisms $(\psi_i)_i$ such that all diagrams
\begin{align*}
    \xymatrix{ B_i \ar[r]^{\psi_i} \ar[d]^{V} & W_i(\F_p) \ar[d]^{V} \\
              B_{i+1} \ar[r]^{\psi_{i+1}} & W_{i+1}(\F_p). }
\end{align*}
commute.
\end{corollary}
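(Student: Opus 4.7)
The plan is to obtain Corollary \ref{1230} by applying Theorem \ref{1800} at each finite level $i$ and then passing to the inverse limit in $i$, exploiting the key fact (recorded immediately before the statement) that the Verschiebung gives an injection $V: W_i(K)/\wp W_i(K) \hookrightarrow W_{i+1}(K)/\wp W_{i+1}(K)$ whose image equals $p\bigl(W_{i+1}(K)/\wp W_{i+1}(K)\bigr)$, i.e.\ coincides with multiplication by $p$ on the level-$(i+1)$ quotient.

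First I would verify that the forward map is well defined. Given $(B_i)_i \in S$, the hypothesis gives in particular $VB_i \subseteq B_{i+1}$. A direct Witt-vector computation (if $b=(b_0,\dots,b_{i-1}) \in W_i(K)$ and $\wp^{-1}b$ has coordinates $(x_0,\dots,x_{i-1})$, then $\wp^{-1}(Vb)=(0,x_0,\dots,x_{i-1})$) shows $K(\wp^{-1}b) = K(\wp^{-1}Vb)$, so $K(\wp^{-1}B_i) \subseteq K(\wp^{-1}B_{i+1})$ and we may form the directed union $L := \varinjlim_i K(\wp^{-1}B_i)$. Its Galois group is the inverse limit $\Gal(L/K) \cong \varprojlim_i \Gal(K(\wp^{-1}B_i)/K)$ of $\Z/p^i\Z$-modules, hence a $\Z_p$-module.

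For the inverse direction, given $L/K$ with $H := \Gal(L/K)$ a $\Z_p$-module, set $L_i := L^{p^i H}$, so $\Gal(L_i/K) \cong H/p^i H$ is a $\Z/p^i\Z$-module; Theorem \ref{1800} attaches to it $B_i := \wp W_i(L_i) \cap W_i(K)/\wp W_i(K)$, with $L_i = K(\wp^{-1}B_i)$. The required compatibility $VB_i = B_{i+1} \cap p\bigl(W_{i+1}(K)/\wp W_{i+1}(K)\bigr)$ is the translation of the fixed-field identity $L_i = L_{i+1}^{p\Gal(L_{i+1}/K)}$ under the Witt-side dictionary: the last diagram of Theorem \ref{1800}, applied to $L_{i+1}$ with $n=i+1$ and inner index $1$, identifies the fixed field of $p\Gal(L_{i+1}/K)$ with $K\bigl(\wp^{-1}(B_{i+1} \cap p(W_{i+1}(K)/\wp W_{i+1}(K)))\bigr)$, while the identification $K(\wp^{-1}b)=K(\wp^{-1}Vb)$ exhibits $L_i$ as $K(\wp^{-1}VB_i)$; injectivity of the correspondence in Theorem \ref{1800} then forces the two subgroups to coincide. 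That the two constructions are mutually inverse and inclusion-preserving follows by applying Theorem \ref{1800} at each finite level.

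Finally, for the commutative diagram, the top row is the isomorphism of Theorem \ref{15} (with $n=\infty$), and the bottom row is the bijection of Theorem \ref{1800} applied to each $L_i$; passing to the inverse limit over $i$ yields the middle row, using $W(K)/\wp W(K) \cong \varprojlim_i W_i(K)/\wp W_i(K)$ (as at the end of the proof of Theorem \ref{15}) and the identification $\Hom_{\mathrm{cont}}(\varprojlim_i A_i, W(\F_p)) \cong \varprojlim_i \Hom(A_i, W_i(\F_p))$ for surjective compatible systems. The description of $\varprojlim_i \Hom(B_i, W_i(\F_p))$ as families $(\psi_i)_i$ compatible with $V$ is tautological from the definitions. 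All squares commute by naturality of the Artin-Schreier-Witt pairing, and the vertical arrows are surjections of profinite groups by the bottom two rows of the diagram in Theorem \ref{1800}. The main obstacle is the careful double-inclusion verification of the compatibility $VB_i = B_{i+1} \cap p\bigl(W_{i+1}(K)/\wp W_{i+1}(K)\bigr)$, which requires tracking how the fixed-field description of subextensions of $L_{i+1}$ matches the $p^{n-i}$-multiples on the Witt side.
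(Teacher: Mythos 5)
Your overall strategy is the same as the paper's (whose proof is essentially one line: apply Theorem \ref{15} and Theorem \ref{1800} level by level and pass to limits), and the details you fill in are largely sound, including the forward direction (via $K(\wp^{-1}b)=K(\wp^{-1}Vb)$) and the passage to inverse limits for the diagram. However, there is an indexing slip in your verification of the compatibility $VB_i = B_{i+1}\cap p\left(W_{i+1}(K)/\wp W_{i+1}(K)\right)$.

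You assert the fixed-field identity $L_i = L_{i+1}^{p\,\Gal(L_{i+1}/K)}$, but this is false for $i>1$: since $\Gal(L_{i+1}/L_i)=p^iH/p^{i+1}H = p^i\bigl(H/p^{i+1}H\bigr)$, the correct statement is $L_i = L_{i+1}^{p^i\Gal(L_{i+1}/K)}$, whereas $L_{i+1}^{p\,\Gal(L_{i+1}/K)}=L_1$. Correspondingly, the last diagram of Theorem \ref{1800} must be applied with $n=i+1$ and the theorem's $i$ equal to $i$ (not $1$), so that $p^{n-i}=p^1=p$ and the fixed field of $p^i\Gal(L_{i+1}/K)$ is identified with $K\bigl(\wp^{-1}(B_{i+1}\cap p(W_{i+1}(K)/\wp W_{i+1}(K)))\bigr)$. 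As you have written it, the inner index $1$ would actually produce $p^{(i+1)-1}=p^i$ in the exponent, not $p$; your two mistakes cancel, so the final formula is correct, but the intermediate step does not hold as stated. With that one correction, the double inclusion you flag as ``the main obstacle'' goes through cleanly, and the rest of your argument (well-definedness, bijectivity by applying Theorem \ref{1800} at each level, the three-row commutative diagram via inverse limits) matches the paper's intent.
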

\begin{proof}
The map is well-defined by Theorem \ref{15} and one has $\underset{\underset{i}{\rightarrow}}\lim\ K(\wp^{-1}B_i)= K(\wp^{-1}B_1,\wp^{-1}B_2,\ldots)$. By Theorem \ref{1800} the map is a bijection and all diagrams follow. 
\end{proof}

\begin{example}
Let us study a couple of specific cases of Corollary \ref{1230}. Let $L/K$ be a Galois extension such that $G=\Gal(L/K)$ is a $\Z_p$-module with corresponding groups $(B_1,B_2,\ldots)$. Below we study the cases when $G$ is $\Z/p^n\Z$-torsion, torsion-free, finitely generated and torsion.

First, $G$ is a $\Z/p^n\Z$-module if and only if $B_{n+i}=V^i B_{n}$ for all $i \geq 0$ (equivalently, just for $i=1$). In that case $L=K(\wp^{-1}B_n)$ and 
\begin{align*}
\underset{\underset{i}{\leftarrow}}\lim \Hom(B_i,W_i(\F_p))=\Hom(B_n,W_n(\F_p))
\end{align*}
and we recover Theorem \ref{1800}. 

Secondly, $G$ is torsion-free if and only if all maps $\pi_i: B_{i+1} \to B_{i}$ are surjective (meaning that each $\Z/p^i\Z$-extension extends to a $\Z/p^{i+1}\Z$-extension). In that case we can form a group $B=\underset{\underset{i}{\leftarrow}}\lim\ B_i \subseteq W(K)/\wp W(K)$ with $\pi_i(B)=B_i$. One has 
\begin{align*}
B \cap p \left(W(K)/\wp W(K)\right) = p B.
\end{align*}
It is not hard to see that we have an isomorphism of topological groups
\begin{align*}
\underset{\underset{i}{\leftarrow}}\lim \Hom(B_i,W_i(\F_p))=&\Hom(B,W(\F_p)) \\
(\psi_i)_i \mapsto& \left( b \mapsto \lim_{i \to \infty} \psi_i(\pi_i b) \right).
\end{align*}
Conversely, a group $B \subseteq  W(K)/\wp W(K)$ with 
\begin{align*}
B \cap p \left(W(K)/\wp W(K)\right) = p B
\end{align*}
gives rise to a sequence $(\pi_1(B),\pi_2(B),\ldots) \in S$ with $B=\underset{\underset{i}{\leftarrow}}\lim\ \pi_i(B)$.

Thirdly, $G$ is finitely generated if and only if $\{ \dim_{\F_p} B_i/p B_i: i \}$ has a maximum, and this maximum is the minimal number of generators. 

Finally, $G$ is torsion if and only if for all $i$ and $x \in B_i$ there is a $j \geq i$ with $x \not \in \pi_i(B_j)$ with $\pi_i: W_j(K)/\wp W_j(K) \to W_i(K)/\wp W_i(K)$ the natural map. 

Consider a sequence $(B_1,B_2,\ldots)$ with $B_i \subseteq W_i(K)/\wp W_i(K)$, not necessarily in $S$. The unique sequence of subgroups corresponding to $K(\wp^{-1}B_1,\wp^{-1}B_2,\ldots)$ is given by $(C_1, C_2,\ldots)$ with
\begin{align*}
C_n =& B_n + V^{-1}(B_{n+1} \cap p \left( W_{n+1}(K)/\wp W_{n+1}(K)  \right) \\
&+ V^{-2}(B_{n+2} \cap p^2 \left( W_{n+2}(K)/\wp W_{n+2}(K)  \right) +  \ldots.
\end{align*} 

\end{example}

Let $R$ be a commutative ring. We say that an $R$-module $M$ is torsion-free if for every $m \in M$, $m \neq 0$, the set $\{r \in R: rm=0\}$ is equal to $0$. 

\begin{lemma} \label{184}
Let $n \in \Z_{\geq 1} \sqcup \{\infty\}$. Let $M$ be a torsion-free $W(\F_p)$-module with $M=\widehat{M}$. Let $\mathfrak{B}$ be a basis of $M/pM$ over $\F_p$. Then the map
\begin{align*}
\widehat{ \bigoplus_{\mathfrak{B}} W_n(\F_p)} \to M/p^nM \\
\left( c_{\overline{b}} \right)_{\overline{b} \in \mathcal{B}} \mapsto \sum_{\overline{b} \in \mathcal{B}} c_{\overline{b}}b
\end{align*}
is a homeomorphism of topological groups, where $M/p^{\infty}M=M$ by definition.
\end{lemma}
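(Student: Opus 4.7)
The strategy is to first establish the lemma at each finite length $n$, where both sides are discrete, then deduce the case $n=\infty$ by passing to the inverse limit. Fix a lift $b\in M$ of each $\overline{b}\in\mathfrak{B}$. For finite $n$ this produces a $W_n(\F_p)$-linear map $\phi_n\colon\bigoplus_{\mathfrak{B}}W_n(\F_p)\to M/p^nM$, $(c_{\overline{b}})\mapsto\sum c_{\overline{b}}b$; both sides are discrete (annihilated by $p^n$) and the $p$-adic completion of $\bigoplus_{\mathfrak{B}}W_n(\F_p)$ is itself in this range. Using $M=\widehat{M}=\varprojlim M/p^nM$ and $\widehat{\bigoplus_{\mathfrak{B}}W(\F_p)}=\varprojlim_n\bigoplus_{\mathfrak{B}}W_n(\F_p)$, the $n=\infty$ statement is obtained as the inverse limit of the $\phi_n$. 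It therefore suffices to show each $\phi_n$ is a bijection: the inverse-limit map is then automatically a homeomorphism between the resulting inverse-limit (equivalently $p$-adic) topologies on the two sides.

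For surjectivity of $\phi_n$ I would use Nakayama-style iterated lifting. Given $m\in M$, the basis property of $\mathfrak{B}$ yields a finite $S_0\subset\mathfrak{B}$ and integer representatives $\tilde{c}_{\overline{b},0}$ with $m\equiv\sum_{\overline{b}\in S_0}\tilde{c}_{\overline{b},0}b\pmod{pM}$; writing $m-\sum\tilde{c}_{\overline{b},0}b=pm_1$ for some $m_1\in M$ and iterating the same construction with $m_1$, then $m_2$, etc., one obtains finite sets $S_0,\ldots,S_{n-1}\subset\mathfrak{B}$ and integers $\tilde{c}_{\overline{b},i}$ with
\[
m\;\equiv\;\sum_{i=0}^{n-1}p^i\sum_{\overline{b}\in S_i}\tilde{c}_{\overline{b},i}\,b\pmod{p^nM}.
\]
Setting $c_{\overline{b}}:=\sum_{i=0}^{n-1}p^i\tilde{c}_{\overline{b},i}\bmod p^n$ gives a preimage of $m$ supported on the finite set $\bigcup_iS_i$, proving surjectivity.

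For injectivity, suppose $(c_{\overline{b}})\in\bigoplus_{\mathfrak{B}}W_n(\F_p)$ is nonzero and $\sum c_{\overline{b}}b\in p^nM$. Choosing integer lifts, let $v<n$ be the minimum $p$-adic valuation among the nonzero $c_{\overline{b}}$, and write $c_{\overline{b}}=p^vc'_{\overline{b}}$ where some $c'_{\overline{b}_0}$ is coprime to $p$. From $p^v\sum c'_{\overline{b}}b\in p^nM$ and the torsion-freeness of $M$ over $W(\F_p)$, one gets $\sum c'_{\overline{b}}b\in p^{n-v}M\subseteq pM$; reducing mod $p$ produces a nontrivial (finite) linear relation among elements of $\mathfrak{B}$ in $M/pM$, contradicting the basis hypothesis. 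The main obstacle is precisely this simultaneous deployment of torsion-freeness (to cancel the common factor $p^v$) and the basis property (to force the contradiction); without torsion-freeness only surjectivity would survive, and without $\mathfrak{B}$ being a basis one would lose uniqueness. With $\phi_n$ bijective for each finite $n$, the homeomorphism claim in all cases follows from the inverse-limit reduction described in the first paragraph.
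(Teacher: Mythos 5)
Your proof is correct and rests on the same two ideas as the paper's: a pull-out-the-minimal-power-of-$p$ argument (combining torsion-freeness with the basis property of $\mathfrak{B}$) for injectivity, and iterated Nakayama-style lifting (using completeness of $M$) for surjectivity, with the topological claim reduced to bijectivity because both sides carry the $p$-adic topology. The only structural difference is the direction of reduction: the paper treats $n=\infty$ directly and declares the finite $n$ case a corollary, while you establish each finite $n$ first and pass to the inverse limit; both orders work, and yours has the minor advantage of avoiding infinite sums entirely.
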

\begin{proof}
Since both groups have the $p$-adic topology, it is enough to show that the map is a bijection. We give a proof for $n=\infty$, the finite 
$n$ case is a corollary.

Assume that $\sum_{\overline{b} \in \mathcal{B}} c_{\overline{b}}b=0$ is a non-trivial combination. Let $m$ be the minimal valuation of the $c_{\overline{b}}$. Modulo $pM$ one obtains
\begin{align*}
\sum_{\overline{b} \in \mathcal{B}} \overline{c_{\overline{b}}/p^m} \overline{b}=0,
\end{align*}
which is a contradiction with the independence of the $\overline{b}$. This shows that the map is injective.

For the surjectivity, let $x \in M$. Then one can write $\overline{x} = \sum_{\overline{b} \in \mathcal{B}} d_{\overline{b}} \overline{b}$. Hence $x -  \sum_{\overline{b} \in \mathcal{B}} [d_{\overline{b}} ] b \in p M$. Since $M$ is complete, we can continue this procedure. This shows that the map is surjective.
\end{proof}

\begin{proposition} \label{182}
Let $K$ be a field of characteristic $p$ and let $n \in \Z_{\geq 1} \sqcup \{\infty\}$. Let $\mathfrak{B}$ be a basis of $K/\wp K$ over $\F_p$. Then the map
\begin{align*}
\widehat{ \bigoplus_{\mathfrak{B}} W_n(\F_p)} \to W_n(K)/\wp W_n(K) \\
(a_b)_{b \in \mathfrak{B}} \to \sum_{i} a_b [b] \pmod{\wp W(K)}
\end{align*}
is an isomorphism of topological groups. 
\end{proposition}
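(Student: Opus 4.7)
The plan is to deduce the statement from Lemma \ref{184} applied to the $\Z_p$-module $M := W(K)/\wp W(K)$. Since $Fx \equiv x \pmod{\wp W(K)}$ for every $x \in W(K)$, we have $F = \mathrm{id}$ on $M$, and hence $p = VF = V$ on $M$. Because $\wp$ and $V$ commute, this yields canonical identifications $M/p^nM \cong W_n(K)/\wp W_n(K)$ that are compatible with the map of the Proposition, since each $[b] \in M$ is a natural lift of its class in $M/pM$. Thus Lemma \ref{184} applied to $M$ delivers the statement for every $n \in \Z_{\geq 1} \sqcup \{\infty\}$ simultaneously, provided its three hypotheses can be verified.

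First, $M$ is torsion-free as a $\Z_p$-module; equivalently, $V$ acts injectively on $M$. If $Vx = \wp y$ in $W(K)$, the zeroth coordinate of $\wp y$ is $y_0^p - y_0 = 0$, so $y_0 \in \F_p$ and $\wp[y_0] = 0$; writing $y - [y_0] = Vy'$ gives $Vx = V\wp y'$, and injectivity of $V$ on $W(K)$ forces $x = \wp y'$. Secondly, the Teichm\"uller lifts $\{[b] : b \in \mathfrak B\}$ reduce to an $\F_p$-basis of $M/pM$: the surjection $W(K) \twoheadrightarrow K$ onto the zeroth coordinate has kernel $VW(K)$ and carries $\wp W(K)$ onto $\wp K$ (because the zeroth coordinate of $\wp(y_0,y_1,\ldots)$ is $y_0^p - y_0$), giving a natural isomorphism $M/pM \cong K/\wp K$ under which $[b] \mapsto b$, so the basis hypothesis on $\mathfrak B$ supplies the claim.

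The third, most subtle, hypothesis is $p$-adic completeness, $M = \widehat M$. The plan here is to invoke Theorem \ref{15}, which provides compatible topological group isomorphisms $M/p^nM \cong \Hom_{\mathrm{cont}}(G/p^nG, W_n(\F_p))$ for finite $n$, and $M \cong \Hom_{\mathrm{cont}}(G_p, W(\F_p))$, where $G = \Gal(K^{\mathrm{ab}}/K)$. The standard identification $\lim_n \Hom_{\mathrm{cont}}(G/p^nG, W_n(\F_p)) \cong \Hom_{\mathrm{cont}}(G_p, W(\F_p))$ then transports into $M \cong \lim_n M/p^nM = \widehat M$. This completeness step is the main obstacle: a hands-on inductive construction of a $\wp$-preimage in $W(K)$ from approximate preimages modulo $V^nW(K)$ is possible but delicate because of the carries in Witt-vector arithmetic, whereas Theorem \ref{15}, which has already organised all of this through the Pontryagin-duality framework, gives the cleanest route.
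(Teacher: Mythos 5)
Your proof is correct and follows essentially the same strategy as the paper's: both reduce to Lemma~\ref{184} applied to $M = W(K)/\wp W(K)$, both note that $F = \id$ on $M$ so multiplication by $p$ coincides with $V$, and both identify $M/pM$ with $K/\wp K$ via the zeroth coordinate. Your explicit verification that $V$ is injective on $M$ (and hence that $M$ is torsion-free) is a welcome expansion of the paper's terse assertion. The one genuine divergence is the completeness step. The paper's implicit argument is that $\wp W(K)$ is a closed subgroup of the $V$-adically complete group $W(K)$ (the paper records this closedness earlier), so the quotient $M$ is complete for the quotient topology, which equals the $p$-adic topology since $p = V$ on $M$. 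You instead invoke Theorem~\ref{15}. That is permissible since Theorem~\ref{15} precedes Proposition~\ref{182} in the paper and its proof does not depend on it; but notice that the paper's proof of Theorem~\ref{15} at $n = \infty$ simply asserts $W(K)/\wp W(K) \cong \lim_n W_n(K)/\wp W_n(K)$ when ``taking the projective limit,'' which is precisely the completeness you want. So invoking Theorem~\ref{15} defers rather than resolves the point. The direct argument you describe as ``delicate'' is in fact short: if $x \in \bigcap_n (\wp W(K) + V^n W(K))$, write $x = \wp y_n + V^n z_n$; then $\wp(y_{n+1} - y_n) \in V^n W(K)$ forces $y_{n+1} - y_n \in W(\F_p) + V^n W(K)$, so after adjusting each $y_n$ by an element of $W(\F_p)$ (which does not change $\wp y_n$) the sequence converges in $W(K)$ to a $y$ with $\wp y = x$. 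That closedness argument is what makes the proof self-contained, and I would recommend using it over the appeal to Theorem~\ref{15}.
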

\begin{proof}
We prove the case $n=\infty$. The other cases are similar. 
Set $H=W(K)/\wp W(K)$. On $H$, one has $\cdot p=V^1$. This shows that $H$ is a complete torsion-free $W(\F_p)$-module. One has $\wp W(K) + p^i W(K) = V^i W(K) + \wp W(K)$.  
This gives
\begin{align*} 
H/pH \cong  W(K)/(\wp W(K) + p W(K)) = W(K)/(V^1 W(K) + \wp W(K))  \cong K/\wp K.
\end{align*}
The result follows from Lemma \ref{184}. 

\end{proof}

\begin{corollary}
Let $K$ be a field of characteristic $p$ and let $n \in \Z_{\geq 1} \sqcup \{\infty\}$. Let $\mathfrak{B}$ be a basis of $K/\wp K$ over $\F_p$ and for $b \in \mathfrak{B}$ let $b' \in \wp^{-1} [b] \subset W_n(K^{\mathrm{sep}})$. Then the map
\begin{align*}
G/p^nG \to& \prod_{\mathfrak{B}} W_n(\F_p) \\
g \mapsto& ( g b' - b')_{b \in \mathfrak{B}}
\end{align*}
is an isomorphism of topological groups. The statement is also true when $n=\infty$ and $G/p^nG$ is replaced by $G_p$. 
\end{corollary}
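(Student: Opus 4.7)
The plan is to chain Theorem \ref{15} with Proposition \ref{182} and then identify the dual of a completed direct sum with the corresponding product. First, Theorem \ref{15} provides a homeomorphism of topological groups
\[
\varphi_1: G/p^nG \xrightarrow{\sim} \Hom(W_n(K)/\wp W_n(K), W_n(\F_p)), \qquad g \mapsto \bigl(\wp a \mapsto ga - a\bigr).
\]
By the choice of $b'$, we have $\wp b' = [b]$, so the functional $\varphi_1(g)$ sends the class of $[b]$ to $gb'-b'$.

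Next, Proposition \ref{182} gives an isomorphism of topological groups
\[
\Psi: \widehat{\bigoplus_{\mathfrak{B}} W_n(\F_p)} \xrightarrow{\sim} W_n(K)/\wp W_n(K), \qquad (a_b)_b \mapsto \sum_{b \in \mathfrak{B}} a_b [b] \pmod{\wp W_n(K)},
\]
which carries the standard generator $e_b$ to the class of $[b]$. I would then show that evaluation at the generators gives an isomorphism of topological groups
\[
\mathrm{ev}: \Hom\Bigl(\widehat{\bigoplus_{\mathfrak{B}} W_n(\F_p)}, W_n(\F_p)\Bigr) \xrightarrow{\sim} \prod_{\mathfrak{B}} W_n(\F_p), \qquad \chi \mapsto \bigl(\chi(e_b)\bigr)_b.
\]
Indeed, any $(c_b) \in \prod_\mathfrak{B} W_n(\F_p)$ defines a continuous morphism $(a_b) \mapsto \sum_b c_b a_b$; the sum converges because elements of the completion are exactly the tuples with $a_b \to 0$ in the $p$-adic sense, and any continuous morphism is determined by its values on the topologically generating set $\{e_b\}$. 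Composing $\mathrm{ev}$ with the precomposition-by-$\Psi$ map and with $\varphi_1$ yields a homeomorphism $G/p^nG \to \prod_\mathfrak{B} W_n(\F_p)$ which, by the computation above, sends $g$ to $(gb'-b')_b$. The case $n=\infty$ follows identically once $G/p^nG$ is replaced by $G_p$ and $W_n(\F_p)$ by $W(\F_p) = \Z_p$.

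The one point that deserves care is matching the topologies in the dualization step: basic open neighborhoods of $0$ in the product topology on $\prod_\mathfrak{B} W_n(\F_p)$ are those restricting finitely many coordinates modulo a power of $p$, while the topology on $\Hom$ inherited via $\varphi_1$ from $G/p^n G$ is generated by subgroups fixing finitely many generators modulo a power of $p$, and these two families of basic open sets coincide under $\mathrm{ev}$. For $n=\infty$, the compatibility of these identifications as $n$ varies (which follows from naturality of $\varphi_1$ and $\Psi$ in $n$) lets us pass to the inverse limit without change.
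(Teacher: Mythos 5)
Your proposal is correct and follows essentially the same route as the paper: chain Theorem \ref{15} with Proposition \ref{182} and identify $\Hom(\widehat{\bigoplus_{\mathfrak{B}} W_n(\F_p)}, W_n(\F_p))$ with $\prod_{\mathfrak{B}} W_n(\F_p)$ by evaluation at the standard generators. The paper's own proof is just a compressed version of what you wrote; the explicit verification that $\varphi_1(g)$ is $W_n(\F_p)$-linear (so that precomposition with $\Psi$ indeed produces $g \mapsto (gb'-b')_b$), the convergence argument for $\mathrm{ev}^{-1}$, and the topology check are all correct and simply fill in details the paper leaves to the reader.
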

\begin{proof}
By Proposition \ref{182} and Theorem \ref{15}, we have for finite $n$
\begin{align*}
G/p^nG \cong \Hom( \widehat{ \bigoplus_{\mathfrak{B}}W_n(\F_p)}, W_n(\F_p) ). 
\end{align*}
We have a natural homeomorphism
\begin{align*}
\Hom(\widehat{\bigoplus_{\mathfrak{B}} W_n(\F_p)}, W_n(\F_p)) \to \prod_{\mathfrak{B}} W_n(\F_p).
\end{align*}
The same proof works for $n=\infty$ and $G_p$. 
\end{proof}

\begin{remark} \label{r1}
By the above, one has $G_p \cong \prod_{\mathfrak{B}} \Z_p$ where $\mathfrak{B}$ is a basis of $K/\wp K$. It follows that $G_p$ has the $p$-adic topology if and only if $\mathfrak{B}$ is finite. 
\end{remark}

\begin{example} \label{183}
In certain cases, one can easily find a basis of $K/\wp K$ over $\F_p$. Below we will construct a subset $\mathcal{D}$ of $K$ which injects into $K/\wp K$ and such that its image forms an $\F_p$-basis of $K/\wp K$. 
For this it is enough to show that $\mathrm{Span}_{\F_p}(\mathcal{D}) \cap \wp K = 0$ and $\mathrm{Span}_{\F_p}(\mathcal{D}) + \wp K=K$. 
\begin{itemize}
\item Assume $K=K^{\mathrm{sep}}$. Then one can take $\mathcal{D}=\emptyset$. 
\item Assume $K$ is a finite field. Take any vector $b$ with $b \not \in \wp K$, that is, take any $b \in K$ with $\mathrm{Tr}_{K/\F_p}(b) \neq 0$. One can take $\mathcal{D}=\{b\}$ (Proposition \ref{90}).
\item Assume $K=k((T))$ for a perfect  field $k$. Let $\mathfrak{B}$ be a subset of $k$ giving a basis of $k/\wp k$ over $\F_p$ and let $\mathfrak{C}$ be a basis of $k$ over $\F_p$. Then one can take
\begin{align*}
\mathcal{D}= \mathfrak{B} \sqcup \{ c T^{-i}: c \in \mathfrak{C},\ (i,p)=1, i\geq 1\}.
\end{align*}
Let us prove this result. If $f \in Tk[[T]]$, then set $g=- \sum_{i=0}^{\infty} f^{p^i} \in Tk[[T]]$. One has $\wp g =f$. Note that $a_iT^{-ip} \equiv a_i^{1/p} T^{-i} \pmod{ \wp K}$ (where we use that $k$ is perfect). Hence we find $\mathrm{Span}_{\F_p}(\mathcal{D}) + \wp K=K$.  Let $f = \sum_i a_i T^i$. One has $\wp f=  \sum_i a_i^p T^{ip}- \sum_i a_i T^i$. We find $\mathrm{Span}_{\F_p}(\mathcal{D}) \cap \wp K = 0$.
\item Assume $K=k(X)$ for some perfect field $k$. Let $\mathfrak{B}$ be a subset of $k$ giving a basis of $k/\wp k$ over $\F_p$ and let $\mathfrak{C}$ be a basis of $k$ over $\F_p$. Then one can take
\begin{align*}
\mathcal{D}= \mathfrak{B} \sqcup& \bigsqcup_f \{ \frac{bX^i}{f^j}:\ (j,p)=1,\ j\geq 1, \ 0 \leq i< \deg(f),\ b \in \mathcal{C} \} \\ \sqcup& \{ b X^j,\ (j,p)=1,\ j\geq 1, \ b \in \mathcal{C} \},
\end{align*}
where $f\in k[x]$ runs over monic irreducible polynomials. 
Indeed, by partial fractions, one can write any $a \in K$ uniquely as a finite sum
\begin{align*}
a= a_0+ \sum_{i=1}^m a_iX^i + \sum_{f} \sum_{i=1}^{\infty} \frac{g_{f,i}}{f^i}
\end{align*}
with $a_i \in k$, $g_{f,i} \in k[X]$ with $0 \leq \deg(g_{f,i})<\deg(f)$ (\cite[Chapter IV, Section 5]{LA}). One has $\mathrm{Span}_{\F_p}(\mathcal{D}) + \wp K=K$. Indeed, a term like $g/f^{p^j}$ can be replaced as follows. One has $g=h_1^p+fh_2$ for some polynomials $h_1, h_2$, because $k[X]/(f)$ is perfect. Then one has
\begin{align*}
g_f/f^{p^j}  = (h_1/f^{p^{j-1}})^{p}+ h_2/f^{p^{j}-1} \equiv h_1/f^{p^{j-1}}+ h_2/f^{p^{j-1}} \pmod{\wp K}. 
\end{align*}
From the expression of $a$, one easily obtains $\mathrm{Span}_{\F_p}(\mathcal{D}) \cap \wp K = 0$. See Lemma \ref{hulu} for a different way of representing $W(K)/\wp W(K)$. 
\end{itemize}
\end{example}

\section{$\Z_p$-extensions of $k((T))$} \label{s3}

Let $k$ be a finite field of cardinality $q=p^m$ and characteristic $p$. Let $K=k((T))$. The field $K$ has a natural valuation. If $f=\sum_{i \geq v} a_i T^i$ with $a_v \neq 0$, then the valuation is $v$. Elements of $K$ with valuation $1$, such as $T$, are called uniformizers. We set $\pa=Tk[[T]]$, the unique maximal ideal of $k[[T]]$. 

\subsection{Construction of $[\ ,\ )$}
Let $K^{ab}$ be the maximal abelian extension of $K$. Let $G=\Gal(K^{ab}/K)$. Let $G_p= \underset{\underset{n}{\leftarrow}}\lim\ G/p^nG$, endowed with the Krull topology. Set $\widehat{K^*}= \underset{\underset{n}{\leftarrow}}{\lim}\ K^*/(K^*)^{p^n}$, the $p$-adic completion of $K^*$ with its natural $p$-adic topology. Note that $\widehat{K^*} \cong T^{\Z_p} \times U^1$ where $U^1=1+Tk[[T]]$ are the one units of $K$ (the $p$-adic topology on $U^1$ does not coincide with the topology coming from the valuation on $K$). We usually identify $\widehat{K^*}$ with $T^{\Z_p} \times U^1$. We have a natural map $K^* \to \widehat{K^*}$, with kernel $k^*$. 

The Artin map (or Artin reciprocity law) from Class field theory is a certain group morphism $K^* \to G$ (see \cite{SE1}). This map is usually the best way to understand the group $G$ and to understand ramification in abelian extensions of $K$.
This Artin map induces a homeomorphism
\begin{align*}
\psi: \widehat{K^*} \to G_p.
\end{align*}
Theorem \ref{15} gives a homeomorphism $G_p \cong \Hom(W(K)/\wp W(K) , W(\F_p))$. Note that $W(K)/\wp W(K)$ is a $W(\F_p)$-module. If we combine both maps, we obtain a $\Z_p$-bilinear, hence continuous, symbol
\begin{align*}
[\ ,\ ): W(K)/\wp W(K) \times \widehat{K^*} \to& W(\F_p) =\Z_p \\
(\wp x,y) \mapsto& \psi(y)x-x.
\end{align*}
This symbol is often called the Schmid-Witt symbol. The main problem of this section is to find a useful explicit formula for computing $[\ ,\ )$ efficiently.

\begin{remark}
Most text on this topic consider the following symbol for $n \in \Z_{\geq 1}$:
\begin{align*}
[\ ,\ )_n: W_n(K)/\wp W_n(K) \times K^*/(K^*)^{p^n} \to& W_n(\F_p) = \Z/p^n\Z \\
[\wp x,y)_n =&  \pi_n \left( \psi(y)x-x \right).
\end{align*}
We will focus on the symbol $[\ ,\ )$, because the symbols $[\ ,\ )_n$ can be computed from this symbol: $[\pi_n x,y)_n=\pi_n [x,y)$ for $x \in W(K)$ and $y \in K^*$. Conversely, the symbol $[\ ,\ )$ can be obtained from the symbols $[\ ,\ )_n$ by taking a projective limit.
\end{remark}

\subsection{Defining properties of $[\ ,\ )$}
We will now state some properties of $[\ ,\ )$. 

\begin{proposition} \label{711}
The symbol $[\ ,\ )$ has the following properties:
\begin{enumerate}
\item the symbol is $\Z_p$-bilinear;
\item for $a \in W(k)$ and $T' \in K^*$ a uniformizer one has $[a,T')=\mathrm{Tr}_{W(k)/W(\F_p)}(a)$;
\item for a uniformizer $T'  \in K^*$, $c \in k$ and $i>0$ with $p \nmid i$ one has $[[cT'^{-i}],T')=0$.  
\end{enumerate}
\end{proposition}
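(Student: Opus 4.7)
For (i), this is immediate from the construction: the Artin map $\psi \colon \widehat{K^*} \to G_p$ and the Artin--Schreier--Witt pairing $G_p \times W(K)/\wp W(K) \to W(\F_p)$ of Theorem~\ref{15} are both $\Z_p$-bilinear in their respective arguments, so their composition $[\ ,\ )$ is $\Z_p$-bilinear as well.

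For (ii), the key observation is that $K(\wp^{-1}a)/K$ is unramified when $a \in W(k)$: one solves $\wp x = a$ coordinate-by-coordinate over $\bar k$ using successive ordinary Artin--Schreier equations, obtaining a pre-image $x \in W(\bar k) \subset W(K^{\sep})$. By local class field theory, $\psi(T')$ restricted to the maximal unramified extension $K^{\un} = \bar k((T'))$ is the absolute Frobenius, acting on $W(\bar k)$ as the Witt Frobenius $F$ raised to the power $m = [k : \F_p]$. From $Fx = x+a$ an easy induction gives $F^m x - x = \sum_{i=0}^{m-1} F^i a$; this sum is $F$-invariant since $F^m a = a$, hence lies in $W(\F_p)$, and equals $\mathrm{Tr}_{W(k)/W(\F_p)}(a)$ by the definition of the trace. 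This yields $[a, T') = F^m x - x = \mathrm{Tr}_{W(k)/W(\F_p)}(a)$.

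Property (iii) is the hardest, and is the main obstacle. The base case $n=1$ follows from the classical Schmid residue formula,
\[
[cT'^{-i}, T')_1 = \mathrm{Tr}_{k/\F_p}\bigl(\mathrm{Res}(cT'^{-i-1}\, dT')\bigr) = 0,
\]
since the exponent $-i-1$ avoids $-1$ whenever $i \geq 1$. For general $n$ I would invoke the old Schmid--Witt formula (Theorem~\ref{f1} from \cite{WITT} and \cite{THO}), which writes the symbol as a formal Witt-vector residue on $W(k)((T'))$, and observe that substituting the Teichm\"uller lift $[cT'^{-i}]$ against $dT'/T'$ yields the same pole-order mismatch that forces vanishing at $n=1$, so $[[cT'^{-i}], T') = 0$ at every level. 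An alternative, strictly local-class-field-theoretic route is to exhibit $T' \in N_{L/K}(L^*)$ for $L = K(\wp^{-1}[cT'^{-i}])$, which would force $\psi(T')|_L = \mathrm{id}$; unlike (i) and (ii), which are formal consequences of functoriality and unramified descent, this requires either Witt's precomputed explicit formula or an explicit norm construction in a wildly ramified Artin--Schreier--Witt extension, both computation-heavy. I would favour the residue route because it parallels the $n=1$ base case and matches the structural template that the cleaner Theorem~\ref{100} later exploits.
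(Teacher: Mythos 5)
Your proofs of (i) and (ii) coincide with the paper's: (i) is noted to be formal from the construction, and for (ii) the paper also observes that the extension is unramified, that local class field theory sends $T'$ to the Frobenius element, and then runs the identical telescoping sum $F^m b - b = \sum_{i=0}^{m-1} F^i(Fb - b) = \sum_{i=0}^{m-1} F^i a = \mathrm{Tr}_{W(k)/W(\F_p)}(a)$. Your extra remark that the sum is $F$-invariant and hence already lies in $W(\F_p)$ is a harmless clarification.

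For (iii) you take a genuinely different route. The paper dispatches (iii) in one line by citing the main theorem of Teichm\"uller's 1937 paper \cite{TEIC} (``Zerfallende zyklische $p$-Algebren''); you instead propose to derive it from the classical ghost-component formula of Theorem~\ref{f1}. Your sketch is correct: taking $X = [[c]T'^{-i}] \in W(W(k)((T')))$ and $Y = T'$ one has $g^{(s)}(X) = [c]^{p^s}T'^{-ip^s}$ and $\mathrm{dlog}\,Y = T'^{-1}$, so every ghost-component residue is the coefficient of $T'^{-1}$ in $[c]^{p^s}T'^{-ip^s-1}$, which vanishes because $-ip^s-1 < -1$ for $i\geq 1$ and $s\geq 0$; hence $Z=0$ and $[x,y)=0$. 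This is arguably more self-contained given the paper's toolkit, but two points are worth flagging. First, it makes Proposition~\ref{711} logically depend on Theorem~\ref{f1}, which the paper only presents later in the section and which it then uses \emph{alongside} Proposition~\ref{711} in the proof of Theorem~\ref{100}; the paper's choice to cite Teichm\"uller keeps Proposition~\ref{711} independent of Theorem~\ref{f1}, so it can genuinely serve as a set of ``defining properties'' on an equal footing with the classical formula. Second, you describe the computation only in outline (``the same pole-order mismatch''); to be a complete proof you would need to actually write down $g(X)$ and $\mathrm{dlog}\,Y$ and take residues, as above --- it is a short computation but should not be left implicit. Your alternative norm-theoretic route would also work (and is closer in spirit to the classical references), but as you say it is harder to make explicit and you do not attempt it.
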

\begin{proof}
i: Follows from the construction. \\

ii: The extension corresponding to $a$ is unramified. Hence by class field theory, a uniformizer $T'$ is mapped to the Frobenius element. Assume $\wp b=a$. Then one has
\begin{align*}
[a,T') = F^mb-b= \sum_{i=0}^{m-1} F^i(Fb-b)=\sum_{i=0}^{m-1} F^ia=\mathrm{Tr}_{W(k)/W(\F_p)}(a).
\end{align*}

iii: This is the main theorem in \cite{TEIC}.
\end{proof}

Let $\alpha \in k$ with $\mathrm{Tr}_{k/\F_p}(\alpha) \neq 0$. Set $\beta=[\alpha] \in W(k) \subset W(K)$. We will now discuss the structure of $W(K)/\wp W(K)$. 

\begin{proposition} \label{712}
Let $T'$ be a uniformizer of $K$. Then any $x=(x_0,x_1,\ldots) \in W(K)$ has a unique representative in $W(K)/\wp W(K)$ of the form 
\begin{align*}
c \beta + \sum_{(i,p)=1} c_i [ T'^{-i} ]
\end{align*}
 with $c \in W(\F_p)$ and $c_i \in W(k)$ with $c_i \to 0$ as $i \to \infty$. Furthermore, the following hold:
\begin{enumerate}
\item if $v(x_j)>0$ for all $j$, then $x \in \wp W(K)$;
\item if $v(x_j) \geq 0$ for all $j$, then $x \equiv c \beta \pmod{\wp W(K)}$;
\item let $j$ be minimal such that $v(x_j)<0$, and assume $p \nmid v(x_j)$, then $\min_i(v(c_i)) = j$. 
\end{enumerate}
\end{proposition}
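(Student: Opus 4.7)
The plan is to first establish the existence and uniqueness of the representation by reducing to Proposition \ref{182}, and then deduce (i)--(iii) by analyzing how the Witt components of $x$ project onto the chosen $\F_p$-basis. Since $k$ is finite, Proposition \ref{90} gives $k/\wp k \cong \F_p$ with $\{\alpha\}$ as a basis, so Example \ref{183} (applied with $T'$ in place of $T$) yields the $\F_p$-basis $\mathcal{D} = \{\alpha\} \sqcup \{cT'^{-i}: c \in \mathfrak{C},\ (i,p)=1,\ i \geq 1\}$ of $K/\wp K$, where $\mathfrak{C}$ is any $\F_p$-basis of $k$. Proposition \ref{182} then gives a topological isomorphism $\widehat{\bigoplus_{\mathcal{D}} \Z_p} \to W(K)/\wp W(K)$. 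Using $[cT'^{-i}] = [c][T'^{-i}]$ and the fact that $\{[c]\}_{c \in \mathfrak{C}}$ is a $\Z_p$-basis of $W(k)$, I regroup $\sum_{c,i} a_{c,i}[cT'^{-i}]$ as $\sum_i c_i [T'^{-i}]$ with $c_i = \sum_c a_{c,i} [c] \in W(k)$, yielding the stated unique form with $c_i \to 0$ in the $p$-adic topology.

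For (i), I will prove the slightly stronger statement that $\wp$ maps the set of Witt vectors with every component in $\pa$ onto itself. The $n$-th component of $\wp y = Fy - y$ has the form $y_n^p - y_n + P_n(y_0, \ldots, y_{n-1})$, where $P_n$ is a polynomial with zero constant term (because $\wp(0) = 0$), so $P_n \in \pa$ whenever $y_0, \ldots, y_{n-1} \in \pa$. Given such an $x$, I inductively choose $y_n \in \pa$ solving $y_n^p - y_n = x_n - P_n \in \pa$, which is possible by the $n=1$ case of Example \ref{183}. For (ii), the hypothesis $v(x_j) \geq 0$ for all $j$ means $x \in W(k[[T']])$; the functorial projection $W(k[[T']]) \to W(k)$ induced by the constant-term map sends $x$ to some $\tilde{x} \in W(k)$, and $x - \tilde{x}$ has every component in $\pa$. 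Part (i) then gives $x \equiv \tilde{x} \pmod{\wp W(K)}$. Finally, Hilbert 90 (Proposition \ref{90}) identifies $W(k)/\wp W(k)$ with $W(\F_p) = \Z_p$ via $\mathrm{Tr}_{W(k)/W(\F_p)}$, and since the first coordinate of $\mathrm{Tr}(\beta)$ is $\mathrm{Tr}_{k/\F_p}(\alpha) \neq 0$, $\mathrm{Tr}(\beta)$ is a unit in $\Z_p$, so $\tilde{x} \equiv c \beta \pmod{\wp W(k)}$ for a unique $c \in \Z_p$.

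For (iii), I will write $x = \sum_{i' \geq 0} V^{i'}[x_{i'}]$ and use the identity $V \equiv p$ on $W(K)/\wp W(K)$ (which follows from $VF = p$ and $F \equiv \id$ modulo $\wp W(K)$) to obtain $x \equiv \sum_{i'} p^{i'}[x_{i'}]$. For $i' < j$, part (ii) ensures $[x_{i'}]$ contributes only to the $\beta$-coefficient, leaving each $c_k$ untouched; contributions from $i' \geq j$ carry a factor $p^{i'}$, forcing $v(c_k) \geq j$ for every $k$ coprime to $p$. For the reverse inequality, write $x_j = \sum_{m \geq \ell} b_m T'^m$ with $\ell = v(x_j)$ and $b_\ell = a \neq 0$. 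The reduction $x_j \pmod{\wp K}$ in the $\mathcal{D}$-basis receives a contribution to the coefficient of $cT'^{\ell}$ only from terms $b_m T'^m$ with $m = p^s \ell$ for some $s \geq 0$, via the perfect-field reduction $b_m T'^m \equiv b_m^{1/p^s} T'^{m/p^s} \pmod{\wp K}$; the assumption $p \nmid \ell$ forces $s = 0$ (larger $s$ would demand $m = p^s \ell < \ell$, where $b_m = 0$), so the coefficient is exactly $a_c$ from the expansion $a = \sum_c a_c c$, which is nonzero for some $c \in \mathfrak{C}$. Multiplying by $p^j$ and noting that the $i' > j$ contributions have $p$-adic valuation $> j$ and cannot cancel the $p^j a_c$ term, I conclude $v(c_{|\ell|}) = j$.

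The main obstacle is in step (iii), where one must rule out cancellations in the $\Z_p$-coefficient of $[cT'^{-|\ell|}]$: both from Frobenius reductions of other Laurent coefficients of $x_j$ itself, and from higher-index Witt components $x_{i'}$ with $i' > j$. The first concern is precisely where the hypothesis $p \nmid v(x_j)$ is essential; the second is automatic since contributions from $i' > j$ have $p$-adic valuation strictly greater than $j$.
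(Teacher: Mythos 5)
Your proof is correct and follows the same approach as the paper: existence and uniqueness come from Proposition~\ref{182} together with the basis of $K/\wp K$ from Example~\ref{183}; (i) is proved by showing componentwise that $\wp$ is surjective on Witt vectors with all entries in $\pa$; (ii) uses functoriality of $W$ applied to the constant-term map $k[[T']]\to k$, then (i), then the identification $W(k)/\wp W(k)\cong W(\F_p)$; and (iii) uses $V\equiv p\pmod{\wp W(K)}$ to isolate the $T'^{v(x_j)}$ coordinate of $x_j$ in $K/\wp K$, where $p\nmid v(x_j)$ rules out Frobenius-reduction cancellations. The paper's proof is far terser (it asserts (i) and (ii) ``follow directly from the proof'' and, for (iii), splits $x\equiv c'\beta+p^j(x_j,x_{j+1},\ldots)$ rather than using the full expansion $\sum_{i'} p^{i'}[x_{i'}]$), so your version spells out the same ideas in detail; it is a faithful and correct elaboration.
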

\begin{proof}
The first part follows from Proposition \ref{182} and Example \ref{183}. 
Property i and ii follow directly from the proof. For iii, one has $x \equiv c' \beta+ p^j (x_j,x_{j+1},\ldots) \pmod{\wp W(K)}$ by ii. Since $p \nmid v(x_j)<0$, one has $v(c_i)=j$ for some $i$. 
\end{proof}

We will now show that the properties in Proposition \ref{711} uniquely determine the symbol.

\begin{proposition} \label{713}
There is a unique map $\langle\ ,\ \rangle: W(K)/\wp W(K) \times \widehat{K^*} \to \Z_p$ having the properties of Proposition \ref{711}. 
\end{proposition}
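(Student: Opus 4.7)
The plan is to leverage $\Z_p$-bilinearity---which yields separate continuity, since $\Z_p$-linear maps between $\Z_p$-modules carrying the $p$-adic topology are automatically continuous---to reduce uniqueness to pinning down values on a short list of topological generators. Note that on $W(K)/\wp W(K)$ multiplication by $p$ coincides with $V$, so its $V$-adic topology is $p$-adic, matching that on $\widehat{K^*}$.

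By Proposition \ref{712} applied with uniformizer $T$, every element of $W(K)/\wp W(K)$ has a unique representative $c\beta + \sum_{(i,p)=1} c_i[T^{-i}]$ with $c \in W(\F_p)$ and $c_i \in W(k)$, $c_i \to 0$. Decomposing $c_i = \sum_{j\geq 0} V^j[c_{ij}]$ and invoking the identity $V^j[d] = p^j[d^{1/p^j}]$, valid since $k$ is perfect, together with multiplicativity of the Teichm\"uller lift, one rewrites $c_i[T^{-i}] = \sum_{j} p^j[c_{ij}^{1/p^j}T^{-i}]$ as a convergent $\Z_p$-combination. Thus $W(K)/\wp W(K)$ is topologically $\Z_p$-spanned by $\beta$ together with the Teichm\"uller lifts $[cT^{-i}]$ for $c \in k$ and $(i,p)=1$; meanwhile $\widehat{K^*} = T^{\Z_p} \times U^1$ is topologically generated by $T$ and $U^1$. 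From (ii) one gets $\langle \beta, T\rangle = \mathrm{Tr}_{W(k)/W(\F_p)}(\beta)$; from (iii), $\langle [cT^{-i}],T\rangle = 0$; and applying (ii) to the uniformizer $Tu$ for $u \in U^1$ immediately yields $\langle \beta, u\rangle = 0$.

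It remains to pin down $\langle [cT^{-i}],u\rangle$ for $u \in U^1$. I would proceed by induction on $i$. Applying (iii) with uniformizer $Tu$ gives
\begin{align*}
0 = \langle [c(Tu)^{-i}], Tu\rangle = \langle [cT^{-i}u^{-i}], T\rangle + \langle [cT^{-i}u^{-i}], u\rangle.
\end{align*}
Since all $[\,\cdot\, T^{-j}]$-generators pair trivially with $T$ by (iii) and Step 2, the first summand equals $c' \cdot \mathrm{Tr}_{W(k)/W(\F_p)}(\beta)$, where $c' \in W(\F_p)$ is the $\beta$-coefficient of $[cT^{-i}u^{-i}]$ in its Proposition \ref{712} representation. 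For the second summand, one expands $[cT^{-i}u^{-i}]$ modulo $\wp W(K)$ in the generating set: the leading Teichm\"uller contribution is $[cT^{-i}]$, coming from the $T^{-i}$-coefficient $c$ of $cT^{-i}u^{-i}$, and the remainder lies in the $\Z_p$-span of $\beta$ and of $[c'T^{-j}]$ with $(j,p)=1$, $1 \leq j < i$, together with $p$-multiples of the $j=i$ generators. A nested induction on $i$ and $p$-adic order then pins down $\langle [cT^{-i}],u\rangle$. Continuity together with density of the generating pairs extends uniqueness to all of $W(K)/\wp W(K) \times \widehat{K^*}$.

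The hard part will be rigorously controlling the expansion of $[cT^{-i}u^{-i}]$ in $W(K)/\wp W(K)$ and setting up the nested induction so it terminates: Witt-vector addition in characteristic $p$ produces contributions at every Verschiebung level, so one must iteratively apply Proposition \ref{712}(i)--(iii) and the reduction-by-$\wp$ algorithm of Example \ref{183}, using perfectness of $k$ to extract the required $p$-th roots, while bookkeeping both the $T$-degree and the $V$-depth of each term that arises.
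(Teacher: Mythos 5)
Your strategy---fix the uniformizer $T$ once and for all, reduce everything to the generators $\beta$ and $[cT^{-i}]$, and then pin down $\langle[cT^{-i}],u\rangle$ for $u\in U^1$ by a nested induction---is genuinely different from the paper's and, as written, has a gap. The paper sidesteps the entire difficulty by exploiting that Proposition~\ref{712} holds for \emph{any} uniformizer: given $y\in\widehat{K^*}$, write $y=T^{v-1}T'$ with $T'$ a uniformizer (so $T'=Tu$ with $u=y/T^v$), and then decompose $x$ in the $T'$-adapted form $x\equiv c'\beta+\sum c'_i[T'^{-i}]$. Bilinearity reduces $\langle x,y\rangle$ to $\langle x,T\rangle$ and $\langle x,T'\rangle$, each of which is computed from properties (ii) and (iii) applied \emph{with the matching uniformizer}. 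No Witt arithmetic, no induction. You never have to evaluate $\langle[cT^{-i}],u\rangle$ at all.

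The gap in your version is the structural claim that, in the $T$-reduced form $[cT^{-i}u^{-i}]\equiv c'\beta+\sum_j c''_j[T^{-j}]$, the ``remainder'' after subtracting $[cT^{-i}]$ involves only $j<i$ together with $p$-multiples of the $j=i$ generators. This omits the terms with $j>i$. To see these can occur, note that the conductors of $K(\wp^{-1}\pi_n[cT^{-i}u^{-i}])/K$ are independent of the choice of uniformizer, and in the $T'$-form they are $\pa^{1+ip^{n-1}}$. By Proposition~\ref{715}, the $T$-form coefficients are therefore only constrained to satisfy $jp^{-v(c''_j)}\leq i$, i.e.\ $v(c''_j)\geq\lceil\log_p(j/i)\rceil$ for $j>i$, not $c''_j=0$. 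So infinitely many $j>i$ can enter, merely with $p$-adic valuation increasing roughly logarithmically in $j$. Your nested induction would have to be re-indexed to accommodate this (e.g.\ on the pair $(j,\ \text{target }p\text{-adic precision})$ with a suitable well-ordering), and you would additionally need to establish the precise valuation bounds on the $c''_j$---exactly the bookkeeping you flag as the ``hard part.'' None of that is carried out, and without it the termination of the induction is not assured. The paper's choice to re-expand $x$ with respect to the uniformizer matching $y$ is precisely what makes all of this unnecessary.
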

\begin{proof}
Let $\langle\ ,\ \rangle$ be such a map. 

Let $x \in W(K)$ and $y \in \widehat{K^*}$. One can write $y=T^{v-1}T'$ with $T'$ of valuation $1$. By property i, one has $\langle x,y \rangle=(v-1)\langle x,T \rangle+\langle x,T' \rangle$. Hence we can restrict to the computation of $\langle x,T' \rangle$ with $T'$ a uniformizer. Modulo $\wp W(K)$, we can write $x=c \beta + \sum_{(i,p)=1} c_i [ T'^{-i} ]$ (Proposition \ref{712}). By property i one has
\begin{align*}
\langle x, T' \rangle =c\langle \beta, T' \rangle + \sum_{(i,p)=1} \langle c_i [ T'^{-i} ],T' \rangle.
\end{align*}
We claim that $\langle c_i [ T'^{-i} ],T' \rangle=0$. Let $\mathcal{B}$ be a basis of $k$ over $\F_p$. We can write $c_i=\sum_{b \in \mathcal{B}} a_b [b]$ with $a_b \in W(\F_p)$. By property i, it is enough to show $\langle [b T'^{-i}],T' \rangle=0$. This follows from property iii. 
Hence one finds by property ii
\begin{align*}
\langle x, T' \rangle = c \langle \beta, T' \rangle = c \mathrm{Tr}_{W(k)/W(\F_p)}(\beta).
\end{align*}
\end{proof}

Technically speaking, the above proof gives us an algorithm for computing the symbol $[\ ,\ )$, but it involves changing the uniformizer and hence the decomposition as in Proposition \ref{712}. This makes the computation of the symbol not very flexible. We are mostly interested in fixing an element $x \in W(K)/\wp W(K)$, fixing one representation in Proposition \ref{712} and computing $[x,\cdot)$, in order to for example compute conductors and ramification groups. In the next sections, we discuss methods for doing this.

\subsection{Classical formula for $[\ ,\ )$}

We will describe a classical formula for computing the Schmid-Witt symbol. In the next subsection, we will discuss a simplified formula. The classical formula comes from \cite[Satz 18]{WITT} and \cite{SCH1}, and a more modern treatment can be found in \cite[Proposition 3.4]{THO}. The idea is to lift the computation to characteristic $0$ and work explicitly with ghost components. Let $R$ be a ring. We denote by $g=(g^{(0)}, g^{(1)}, \ldots): W(R) \to R^{\Z_{\geq 0}}$ the ghost component map.

The ring morphism $\pi_1: W(k) \to W_1(k)=k$ induces ring morphisms
\begin{align*}
P_1=W(\pi_1) \colon W(W(k)) \to W(k) \\
(a_i)_i \mapsto (\pi_1(a_i))_i,
\end{align*}
and
\begin{align*}
P_2 \colon W(k)((T)) \to& K=k((T))\\
\sum_i a_i T^i \mapsto& \sum_i \pi_1(a_i) T^i,
\end{align*}
and
\begin{align*}
P_3=W(P_2) \colon W(W(k)((T))) \to& W(K)=W(k((T))) \\
 (\sum_j a_{ij}T^j)_i \mapsto& (\sum_j \pi_1(a_{ij})T^j)_i.
\end{align*}
Furthermore, we have the logarithmic derivative map (a group morphism)
\begin{align*}
\mathrm{dlog}: W(k)((T))^* \to& W(k)((T)) \\
f \mapsto& df/f
\end{align*}
where $df$ is the derivative of $f$ (formally as power series). Finally, we have the residue map (a group morphism)
\begin{align*} 
\mathrm{Res}: W(k)((T)) \to& W(k) \\
\sum_i a_i T^i \mapsto& a_{-1}.
\end{align*}
Let $x \in W(K)$ and $y \in K^*$. Let $X \in W(W(k)((T)))$ with $P_3X=x$ and $Y \in W(k)((T))$ with $P_2Y=y$. 
One has $\mathrm{dlog} Y, g^{(i)}X \in W(k)((T))$, and set $Z=\left( \mathrm{Res}( \mathrm{dlog}  Y \cdot g^{(i)}X )\right)_{i=0}^{\infty} \in W(k)^{\Z_{\geq 0}}$. One then considers $Z'=g^{-1}Z \in W(W(k))$ (it requires some work to show that the inverse of the ghost map lands in $W(W(k))$) and $P_1 Z' \in W(k)$. Finally one can take the trace to obtain an element of $W(\F_p)$. This construction does not depend on the choice of $X$ and $Y$. 

\begin{theorem} \label{f1}
Let $x \in W(K)$ and $y \in K^*$. Let $X \in W(W(k)((T)))$ with $P_3X=x$ and $Y \in W(k)((T))$ with $P_2Y=y$. Then one has
\begin{align*}
[x,y) = \mathrm{Tr}_{W(k)/W(\F_p)} \left( P_1 g^{-1} \left( \mathrm{Res} ( \mathrm{dlog} Y \cdot g^{(i)}X ) \right)_{i=0}^{\infty}  \right).
\end{align*}
\end{theorem}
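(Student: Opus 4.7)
My approach is to invoke the uniqueness statement of Proposition \ref{713}: the symbol $[\ ,\ )$ is determined by $\Z_p$-bilinearity together with properties (ii) and (iii) of Proposition \ref{711}. So it suffices to check that the right-hand side of the displayed formula is a well-defined $\Z_p$-bilinear map $W(K)/\wp W(K) \times \widehat{K^*} \to W(\F_p)$ satisfying those two normalizations. Denote the proposed right-hand side by $\langle x, y\rangle$.

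\textbf{Well-definedness.} First I would show that $\langle x, y\rangle$ does not depend on the choice of lifts $X$ and $Y$: if $X'$ is another lift, then $X - X' \in \ker P_3$, whose entries lie in $pW(k)((T))$; an inspection of the ghost components shows that $g^{(i)}(X - X')$ acquires enough divisibility by $p$ to wash out after $g^{-1}$. The analogous argument handles $Y$, using that $\mathrm{dlog}$ vanishes on $1 + pW(k)((T))$. Second, I need to verify the crucial integrality statement that $g^{-1}$ of the residue vector lies in $W(W(k))$; this reduces to checking the Dwork congruences
\begin{equation*}
\mathrm{Res}(\mathrm{dlog}\, Y \cdot g^{(i)}X) \equiv \mathrm{Res}(\mathrm{dlog}\, Y \cdot g^{(i-1)}X)^{F} \pmod{p^i},
\end{equation*}
where $F$ is the Frobenius lift on $W(k)$, using that $g^{(i)}X \equiv (g^{(i-1)}X)^F \pmod{p^i}$ and that $\mathrm{dlog}\, Y$ commutes appropriately with the Frobenius after a short computation. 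Third, I would verify that $\langle \wp z, y\rangle = 0$: lifting $\wp z$ as $FZ - Z$ gives residue components of the form $\mathrm{Res}(\mathrm{dlog}\, Y \cdot (g^{(i)}Z)^F - \mathrm{dlog}\, Y \cdot g^{(i)}Z)$, which vanish after applying $g^{-1}$ and $P_1$ by the same Frobenius-shift argument. Finally, continuity in $y$ is clear and lets us extend from $K^*$ to $\widehat{K^*}$.

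\textbf{Verifying the axioms.} $\Z_p$-bilinearity is immediate: linearity in $y$ follows from $\mathrm{dlog}(y_1 y_2) = \mathrm{dlog}\, y_1 + \mathrm{dlog}\, y_2$, while additivity in $x$ is inherited from the ghost-component formalism (ghost components send Witt addition to componentwise addition). For property (ii), take $a \in W(k)$ and a uniformizer $T'$; lift $a$ to $X \in W(W(k)) \subset W(W(k)((T)))$ and $T'$ to itself. Each $g^{(i)}X$ is an element of $W(k)$, constant in $T$, and $\mathrm{Res}(dT'/T' \cdot g^{(i)}X) = g^{(i)}X$. Thus $g^{-1}$ returns $X$ itself, $P_1 X$ recovers $a$, and the trace gives $\mathrm{Tr}_{W(k)/W(\F_p)}(a)$. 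For property (iii), take $x = [cT'^{-i}]$ with $p \nmid i$ and lift to the Teichm\"uller vector $X = ([\tilde c]\, T'^{-i}, 0, 0, \ldots)$ with $\tilde c \in W(k)$ a lift of $c$. Then $g^{(j)}X = [\tilde c]^{p^j} T'^{-ip^j}$, and with $Y = T'$ the residue $\mathrm{Res}(dT'/T' \cdot [\tilde c]^{p^j} T'^{-ip^j})$ vanishes for every $j$ because the exponent $-ip^j - 1 \neq -1$ when $i > 0$. So the whole vector is zero and $\langle x, T'\rangle = 0$.

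\textbf{Main obstacle.} The heart of the proof is the integrality claim that the ghost-inverse is indeed an element of $W(W(k))$: the ghost map $g$ is injective but has an image cut out by the Dwork congruences, and verifying these congruences for the specific sequence $\bigl(\mathrm{Res}(\mathrm{dlog}\, Y \cdot g^{(i)}X)\bigr)_i$ requires care. Once integrality and independence from lifts are established, everything else is a direct computation with residues of monomials, and the theorem follows from Proposition \ref{713}.
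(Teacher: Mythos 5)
Your strategy---show the right-hand side is a well-defined $\Z_p$-bilinear form satisfying properties (ii)--(iii) of Proposition~\ref{711}, then invoke the uniqueness statement of Proposition~\ref{713}---is a legitimate and conceptually clean route to this theorem, and it differs genuinely from the paper, which does not prove the statement but only cites Thomas' thesis. Your verifications of properties (ii) and (iii) are essentially correct; for (iii) the change-of-uniformizer invariance of $\mathrm{Res}$ over $W(k)$ that you use implicitly deserves a word of justification, say by passing to $W(k)[1/p]((T))$ where $u^{-n-1}\,du$ is an exact form for $n\neq 0$ and the residue therefore vanishes.

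The substantive gap is the integrality claim in your well-definedness step: that $\bigl(\mathrm{Res}(\mathrm{dlog}\,Y\cdot g^{(i)}X)\bigr)_{i}$ lands in the image of the ghost map $g\colon W(W(k))\to W(k)^{\Z_{\ge 0}}$. You reduce it to a Dwork congruence and assert that $g^{(i)}X\equiv\phi(g^{(i-1)}X)\pmod{p^i}$ together with ``a short computation'' about how $\mathrm{dlog}\,Y$ interacts with Frobenius finishes the job. But the residue pairing $h\mapsto\mathrm{Res}(\mathrm{dlog}\,Y\cdot h)$ does \emph{not} straightforwardly intertwine the Frobenius lift $\phi$ on $W(k)((T))$ (coefficientwise Frobenius together with $T\mapsto T^p$) with $F$ on $W(k)$: the substitution $T\mapsto T^p$ changes which coefficients contribute to the residue, and a naive coefficient-by-coefficient comparison---try $Y=1-aT$, where one is led to needing $a^{sp}\equiv F(a)^s\pmod{p^i}$---fails for $i\ge 2$. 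The required congruence \emph{is} true, but the proof must exploit both the special structure of $g^{(i-1)}X$ as a ghost component (a sum $\sum_j p^j X_j^{p^{\,i-1-j}}$ carrying extra $p$-divisibility) and the multiplicative shape of $Y$ as a lift of an element of $K^*$; this is precisely the technical core of Witt's and Thomas' arguments, not an afterthought. The same concern applies to your sketches of lift-independence and of vanishing on $\wp W(K)$: the divisibility estimates on which they rest are asserted, not established. Your plan is sound and the normalization checks are right, but the central integrality computation---the real content of the theorem---remains to be done.
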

\begin{proof}
See \cite{THO}, Proposition 3.4. 
\end{proof}

\begin{remark} \label{299}
From the above formula, it is quite easy to see that for a fixed $x \in W(K)$ there is an integer $i$ with $[x,1+T^i k[[T]])=0$, and this implies the existence of the conductor corresponding to $x$. We can phrase this in the following way. We can endow $\widehat{K^*} \cong T^{\Z_p} \times U^1$ with a different topology. On $U^1$ we take the topology coming from the valuation on $k((T))$, with open sets around $1$ of the form $1+T^j k[[T]]$. Then the symbol $[\ ,\ ): W(K)/\wp W(K) \times \widehat{K^*} \to W(\F_p)$ is continuous with this new topology. 
\end{remark}

Unfortunately, it is not obvious to see from the above formula that the properties in Proposition \ref{711} are satisfied.

From the formula above, one can deduce the following simplified formula \cite[Proposition 3.5]{THO} with $X$ and $Y$ as above:
\begin{align*}
[x,y)_n = \pi_n ( \mathrm{Tr}_{W(k)/W(\F_p)} \mathrm{Res}( g^{(n-1)} X \cdot \mathrm{dlog} Y )).
\end{align*}
This formula can then be used to compute conductors in $\Z/p^n\Z$-extensions. However, computations turn out to be quite tedious, and this formula is not a convenient formula for computing $[x,y)$ itself. Furthermore, it still involves computing one ghost component. The formula for $n=1$ is particularly easy: 
\begin{align*}
[x,y)_1= \mathrm{Tr}_{k/\F_p} \left( \mathrm{Res}(x \frac{dy}{y}) \right). 
\end{align*}
In the next section, we deduce a formula for $[\ ,\ )$ which is strikingly similar to the formula for $n=1$. Also, our formula resembles formulas for similar symbols constructed by using Kummer theory when $K$ is a finite field extension of $\Q_p$.

\subsection{A new formula for $[\ ,\ )$}

Consider the ring 
\begin{align*}
R=\left\{\sum_{i \in \Z} a_i T^i: a_i \in W(k),\ \lim_{i \to -\infty} a_i=0\right\}= \underset{\underset{i}{\leftarrow}}\lim\ W(k)/V^iW(k) ((T))
\end{align*}
of two sided power series with some convergence property. We have a residue map
\begin{align*}
\mathrm{Res}: R \to& W(k) \\
\sum_i a_i T^i \to& a_{-1}. 
\end{align*}
Let $x \in W(K)$. Let $c\beta + \sum_{(i,p)=1} c_i [  T]^{-i}$ be its unique representative modulo $\wp W(K)$ as in Proposition \ref{712}. We define 
\begin{align*}
\tilde{}: W(K)/\wp W(K) \to& R \\
c\beta + \sum_{(i,p)=1} c_i [  T]^{-i} \pmod{\wp W(K)} \mapsto& c\beta + \sum_{(i,p)=1} c_i T^{-i}.
\end{align*}
Any element $y \in \widehat{K^*} \cong T^{\Z_p} \times \left( 1+ Tk[[T]] \right)$ can uniquely be written as 
\begin{align*}
y=T^e \cdot \prod_{(i,p)=1}^{\infty}  \prod_{j=0}^{\infty} (1- a_{ij} T^i)^{p^j}
\end{align*}
with $e \in \Z_p$ and $a_{ij} \in k$. 
We first set
\begin{align*}
\tilde{ }: \widehat{K^*} \cong T^{\Z_p} \times \left( 1+ Tk[[T]] \right) \to& T^{\Z_p} \times \left( 1+ TW(k)[[T]] \right) \\
T^e \cdot \prod_{(i,p)=1}^{\infty}  \prod_{j=0}^{\infty}  (1- a_{ij} T^i)^{p^j} \mapsto& T^e \cdot \prod_{(i,p)=1}^{\infty}  \prod_{j=0}^{\infty}  (1- [a_{ij}]T^i)^{p^j}.
\end{align*}
Note that for $y \in K^*$ one has $P_2\tilde{y}=y$. 
Furthermore, we define the group morphism
\begin{align*}
\mathrm{dlog}: T^{\Z_p} \times \left( 1+ TW(k)[[T]] \right) \to& W(k)((T)) \\
T^e \cdot f \mapsto& \frac{e}{T} + \frac{df}{f}
\end{align*}
where $df$ is the formal derivative of $f$. 

\begin{theorem} \label{100}
Let $x \in W(K)$ and $y \in \widehat{K^*}$. Then one has 
\begin{align*}
[x,y) = \mathrm{Tr}_{W(k)/W(\F_p)} \left( \mathrm{Res}( \tilde{x} \cdot \mathrm{dlog}\tilde{y}) \right).
\end{align*}
Equivalently, let $x \equiv c\beta + \sum_{(i,p)=1} c_i [ T]^{-i} \pmod{\wp W(K)}$ as in Proposition \ref{712}, and $y=T^e \cdot \prod_{(i,p)=1}^{\infty}  \prod_{j=0}^{\infty}  (1- a_{ij} T^i)^{p^j} \in \widehat{K^*}$ with $a_{ij} \in k$ and $d \in \Z_p$. Then one has:
\begin{align*}
[x,y) = ce\mathrm{Tr}_{W(k)/W(\F_p)}(\beta)- \sum_{j=0}^{\infty} p^j  \mathrm{Tr}_{W(k)/W(\F_p)} \left(\sum_{(i,p)=1} c_i \sum_{l|i} l [a_{lj}]^{i/l} \right) \in W(\F_p). 
\end{align*}
\end{theorem}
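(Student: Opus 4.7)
Let $\Phi(x,y)$ denote the right-hand side of the compact formula. The plan is to reduce the equality $\Phi=[\,,\,)$ to a verification on a small set of generating pairs by means of $\Z_p$-bilinearity and continuity, and to handle the nontrivial generator case via the classical formula of Theorem \ref{f1}.

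First I will check that the two displayed formulas agree. Expanding $\tilde{x}=c\beta+\sum_{(i,p)=1}c_iT^{-i}$ and $\mathrm{dlog}\tilde{y}=e/T+\sum_{(l,p)=1,\,j\geq 0}p^j\cdot\frac{-l[a_{lj}]T^{l-1}}{1-[a_{lj}]T^l}$, the $T^{-1}$-coefficient of the product splits as $ce\beta$ (pairing the constant term of $\tilde{x}$ with $e/T$) plus, for each $(l,j)$, the contribution from pairing each $c_iT^{-i}$ with the geometric series $-p^jl[a_{lj}]T^{l-1}\sum_m[a_{lj}]^mT^{lm}$, which yields $-p^jlc_i[a_{lj}]^{i/l}$ precisely when $l\mid i$. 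Applying $\mathrm{Tr}_{W(k)/W(\F_p)}$ yields the expanded formula.

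Both $\Phi$ and $[\,,\,)$ are $\Z_p$-bilinear, and both are continuous when the one-units carry the $T$-adic topology (for $[\,,\,)$ this is Remark \ref{299}; for $\Phi$ it is immediate from the definition). By Proposition \ref{712} and the standard decomposition of $\widehat{K^*}\cong T^{\Z_p}\times U^1$, it therefore suffices to verify $\Phi(x,y)=[x,y)$ on the four families of pairs with $x\in\{\beta\}\cup\{[bT^{-i}]\}$ and $y\in\{T\}\cup\{1-[a]T^l\}$, where $b\in k$, $(i,p)=1$, $a\in k$, $(l,p)=1$. Three cases are immediate: for $(\beta,T)$ both sides equal $\mathrm{Tr}_{W(k)/W(\F_p)}(\beta)$ (Proposition \ref{711}(ii), resp.\ $\mathrm{Res}(\beta/T)=\beta$); for $(\beta,1-[a]T^l)$ the holomorphy of $\mathrm{dlog}(1-[a]T^l)$ forces $\Phi=0$, and $K(\wp^{-1}\beta)/K$ being unramified forces the Artin map to kill the one-unit so $[\,,\,)=0$; for $([bT^{-i}],T)$ the exponent $-i-1\neq -1$ gives $\Phi=0$, matching Proposition \ref{711}(iii).

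The heart of the argument is the remaining pair $([bT^{-i}],1-[a]T^l)$. I apply the truncated classical formula $[x,y)_n=\pi_n\mathrm{Tr}_{W(k)/W(\F_p)}\mathrm{Res}(g^{(n-1)}X\cdot\mathrm{dlog}Y)$ displayed just after Theorem \ref{f1}, with lifts $X=([b]T^{-i},0,\ldots)$ and $Y=1-[a]T^l$. Since only the zeroth component of $X$ is nonzero, $g^{(n-1)}X=([b]T^{-i})^{p^{n-1}}=[b^{p^{n-1}}]T^{-ip^{n-1}}$; combined with $\mathrm{dlog}Y=-l[a]T^{l-1}\sum_{m\geq 0}[a]^mT^{lm}$, the residue of the product equals $-l[b^{p^{n-1}}][a^{ip^{n-1}/l}]=-lF^{n-1}[a^{i/l}b]$ exactly when $l\mid ip^{n-1}$, which since $(l,p)=1$ is equivalent to $l\mid i$, and is $0$ otherwise. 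Using $F$-invariance of $\mathrm{Tr}_{W(k)/W(\F_p)}$, the value $[x,y)_n=\pi_n(-l\mathrm{Tr}_{W(k)/W(\F_p)}([a^{i/l}b]))$ is independent of $n$, so $[x,y)=-l\mathrm{Tr}_{W(k)/W(\F_p)}([a^{i/l}b])$ when $l\mid i$ and $0$ otherwise; the same answer drops out of a direct expansion of $\Phi(x,y)$ via the same geometric series. Bilinearity and continuity then propagate this equality to all of $W(K)/\wp W(K)\times\widehat{K^*}$.

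The main obstacle is aligning the ghost-by-ghost classical formula with the single coordinate-free residue in Theorem \ref{100} for this last pair. The two ingredients that collapse the $n$-dependence are Frobenius-invariance of the trace and the arithmetic equivalence $l\mid ip^{n-1}\iff l\mid i$ afforded by $(l,p)=1$; together they convert the truncated Witt-vector calculation into the clean closed form of Theorem \ref{100}.
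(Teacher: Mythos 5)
Your proof is correct and follows essentially the same strategy as the paper: reduce to generating pairs by $\Z_p$-bilinearity and continuity, dispatch the easy cases via Proposition~\ref{711}, and settle the key pair $([bT^{-i}],1-[a]T^l)$ by appeal to the classical ghost-component formula. The one small difference is that you invoke the truncated formula $[x,y)_n=\pi_n\bigl(\mathrm{Tr}_{W(k)/W(\F_p)}\mathrm{Res}(g^{(n-1)}X\cdot\mathrm{dlog}Y)\bigr)$ and use Frobenius-invariance of the trace to collapse the $n$-dependence, whereas the paper applies Theorem~\ref{f1} directly and computes $P_1g^{-1}$ of the full ghost sequence; the two are interchangeable and lead to the same calculation.
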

\begin{proof}
We show that the formula agrees with Theorem \ref{f1}. By Remark \ref{299} and the bilinearity one finds
\begin{align*}
[x,T^e \cdot \prod_{(i,p)=1}^{\infty}  \prod_{j=0}^{\infty}  (1- a_{ij} T^i)^{p^j} ) = e[x,T) +  \sum_{(i,p)=1}^{\infty}  \sum_{j=0}^{\infty} p^j[x,1-a_{ij}T^i),
\end{align*}
and a similar decomposition holds for our proposed formula.
By Proposition \ref{712} and bilinearlity, one can restrict to the following three cases: $(x,y)=(x,T)$,  $(x,y)=(\beta,1-a T^i)$ and $(x,y)=([b T^{-l}],1-a T^i)$ with $a, b \in k$, $p \nmid li$. 
By Property i, ii, iii from Proposition \ref{711} with $x \equiv c\beta + \sum_{(i,p)=1} c_i \{ T\}^{-i} \pmod{\wp W(K)}$,  one finds:
\begin{align*} 
[x,T) = [c \beta,T) = \mathrm{Tr}_{W(k)/W(\F_p)}(c \beta). 
\end{align*} 
By Proposition \ref{711} ii one has
\begin{align*}
[\beta,1-aT^i)=0.
\end{align*}
Our formula gives the same in the first two cases. 
Finally, we consider the case $(x,y)=([b T^{-l}],1-a T^i)$. We follow Theorem \ref{f1} and pick the following lifts. Set $Y=1-[a] T^i=\tilde{y} \in W(k)((T))$. Set $X=[[b]T^{-l}] \in W(W(k)((T)))$. One has 
\begin{align*}
\mathrm{dlog}Y = \mathrm{dlog}\tilde{y} = -i \sum_{k \geq 1} [a]^k T^{ik-1}
\end{align*} 
and $g(X)= ( [b]^{p^s} T^{-lp^s})_{s=0}^{\infty}$. One then has
\begin{align*}
\left( \mathrm{Res} ( \mathrm{dlog} Y \cdot g^{(s)}X ) \right)_{s=0}^{\infty}=&   \left( \mathrm{Res} (  -i \sum_{k \geq 1} [a]^k T^{ik-1} \cdot  [b]^{p^s} T^{-lp^s} )   \right)_{s=0}^{\infty} \\
=& \left\{ \begin{array}{cc} -i ( [a^{l/i} b ]^{p^s})_{s=0}^{\infty}=-ig([[a^{l/i} b ]]) &\textrm{if } i\mid l \\
0 & \textrm{if } i \nmid l. \end{array} \right.
\end{align*}
One then finds:
\begin{align*}
[x,y) = &\left\{ \begin{array}{cc} \mathrm{Tr}_{W(k)/W(\F_p)} \left( -i[a^{l/i} b] \right) &\textrm{if } i\mid l  \\ 0 & \textrm{if } i \nmid l  \end{array}  \right. \\
=&  \mathrm{Tr}_{W(k)/W(\F_p)} \left( \mathrm{Res}( [b] T^{-l} \cdot -i \sum_{k \geq 1} [a]^k T^{ik-1} ) \right) \\  
=& \mathrm{Tr}_{W(k)/W(\F_p)} \left( \mathrm{Res}( \tilde{x} \cdot \mathrm{dlog}\tilde{y}) \right).
\end{align*}
The explicit formula follows from an explicit computation, where
\begin{align*}
\mathrm{dlog} \tilde{y} = e/T  - \sum_{j=0}^{\infty} p^j \sum_{(i,p)=1}^{\infty}  i \sum_{k \geq 1} [a_{ij}]^k T^{ik-1}.
\end{align*}
and
\begin{align*}
\tilde{x}=c \beta+ \sum_{(i,p)=1} c_i T^{-i}.
\end{align*}
The formula then follows from 
\begin{align*}
\mathrm{Res}(\tilde{x} \cdot \mathrm{dlog} \tilde{y}) = ce \beta - \sum_{j=0}^{\infty} p^j  \sum_{(i,p)=1} c_i \sum_{l|i} l [a_{lj}]^{i/l}.
\end{align*}

\end{proof}

\begin{remark}
The formula in Theorem \ref{100} can be used to compute $[x,y)_n$ given $x,y,n$ in polynomial time (if enough precision of $x,y$ are given). We leave the details to the readers.

When $K$ is a finite extension of $\Q_p$ with residue field $k$ containing a primitive $p^n$th root of unity $\zeta$, there is a similar symbol $(\ ,\ )_{p^n}: K^* \times K^* \to \langle \zeta \rangle$ obtained from combining local class field theory and Kummer theory. There exists a formula for this symbol which is quite similar to our formula for $[\ ,\ )$. In the notation of \cite[Chapter VII]{FESE} one has for $p>2$
\begin{align*}
(x,y)_{p^n} = \zeta^{\mathrm{Tr}_{W(k)/W(\F_p)}(\mathrm{Res}(f_{x,y}g))}
\end{align*}
where $f_{x,y}$ is a power series in $W(k)((X))$ depending on $x$ and $y$ and $g$ is a power series in $W(k)((X))$ depending on $K$. For $p=2$, a similar formula exists. 

Furthermore, there is a deterministic polynomial time algorithm which on input a local number field $K$, an integer $p^n$ and $x,y, \in K^*$ checks if $K$ contains a primitive $p^n$th root of unity and if so, computes the symbol $(x,y)_{p^n}$ \cite{BOUW}.
\end{remark}

\begin{remark}
Let $y \in K^*$ be a polynomial with constant term $1$. Factor $y=\prod_{i=1}^n (1-\alpha_i T) \in \overline{k}[T]$. One can lift $y$ by taking $Y=\prod_{i=1}^n (1-[\alpha_i] T) \in W(k)[T]$. Let $x \in W(K)$ with $x \equiv \sum_{(i,p)=1} c_i [T]^{-i}  \pmod{\wp W(K)}$. 
Then one finds, by following the above proof,
\begin{align*}
[x,y)= -\mathrm{Tr}_{W(k)/W(\F_p)} \left(\sum_{i=1}^n   \tilde{x}([\alpha_i]^{-1}) \right).
\end{align*}
Hence one can view the symbol as an evaluation symbol. This formula simplifies to $-\mathrm{Tr}_{W(k(\alpha_j))/W(\F_p)}  \left( \tilde{x}[\alpha_j]^{-1} \right)$ if $y$ is irreducible. 
\end{remark}

\begin{remark} \label{900}
Let us give another interpretation of the symbol $[\ ,\ )$. Consider the map $\psi: \widehat{K^*} \to G_p$ from class field theory. 
One has $\widehat{K^*} \cong T^{\Z_p} \times U^1$.
One has $G_p \cong \Hom(W(K)/\wp W(K),W(\F_p))$. Note that  
\begin{align*}
\Hom(W(K)/\wp W(K),W(\F_p)) \cong& \Hom(W(\F_p),W(\F_p)) \times \prod_{(i,p)=1} \Hom(W(k),W(\F_p)) \\
\chi \mapsto& \left( (a \mapsto \chi(a \beta) ), ( b \mapsto \chi(b[T^{-i}]))_i \right). 
\end{align*}
The natural map
\begin{align*}
W(k) \to& \Hom(W(k),W(\F_p)) \\
a \mapsto& (x \mapsto \mathrm{Tr}_{W(k)/W(\F_p)}(xa)).
\end{align*}
is an isomorphism because the map $\mathrm{Tr}_{k/\F_p}$ and $\mathrm{Tr}_{W(k)/W(\F_p)}$ are non-degenerate. Combining givues us an isomorphism
\begin{align*}
\Hom(W(K)/\wp W(K),W(\F_p)) \cong  W(\F_p) \times \prod_{(i,p)=1} W(k). 
\end{align*}
Class field theory gives $\widehat{K^*} \cong \Hom(W(K)/\wp W(K),W(\F_p))$. Using our explicit formule in Theorem \ref{100},  we obtain an isomorphism
\begin{align*}
T^{\Z_p} \times U^1 \cong& W(\F_p) \times \prod_{(i,p)=1} W(k), \\
T^e \cdot \prod_{(i,p)=1}^{\infty}   \prod_{j=0}^{\infty}  (1- a_{ij} T^i)^{p^j} \mapsto& \left(e\mathrm{Tr}_{W(k)/W(\F_p)}(\beta), \left( \sum_{j=0}^{\infty} p^j \sum_{l|i} l [a_{lj}]^{i/l} \right)_i \right).
\end{align*}
This map induces isomorphisms $T^{\Z_p} \cong W(\F_p)$ and $U^1 \cong \prod_{(i,p)=1} W(k)$. The second map is known, see for example \cite[Proposition 1.10]{HESS}. We can use these map to define $[\ ,\ )$. To check that the properties of Proposition \ref{711} hold, it is important to show invariance when a different uniformizer is chosen: how the decomposition in $W(K)/\wp W(K)$ as in Proposition \ref{712} and the map $U^1 \cong \prod_{(i,p)=1} W(k)$ change. 
\end{remark}

\begin{remark}
Generalizations of $[\ ,\ )$ exist, see for example \cite[6.4]{KAT}. Formulas here use ghost components explicitly and it would be interesting to see if our approach can be generalized.
\end{remark}

\subsection{Conductors, discriminants, and ramification groups}

\subsubsection{Ramification groups}

For $r \in \Z_{\geq -1}$, let $G_p^r$ denote the $r$th upper ramification group of $G_p$ (see \cite[Chapter IV]{SE1} for more details). Our goal is to compute the image $H^r$ of $G_p^r$ under the isomorphism 
$$\tau: G_p \to H=\Hom(W(K)/\wp W(K),W(\F_p))$$
from Theorem \ref{15}. 
Let $\psi: \widehat{K^*} \to G_p$ be the homeomorphism from class field theory. 
One can compute the ramification groups from $\psi$ as follows (\cite[Chaper XV, Theorem 2]{SE1}). Set $U^i=1+T^i k[[T]] \subset \widehat{K^*}$ for $i \in \Z_{\geq 1}$. 
One has $H^{-1}=\tau \circ \psi(\widehat{K^*})=H$ (decomposition group) and
\begin{align*}
H^0 = \tau \circ \psi (k^* U^1 ) = \{ \chi \in H: \chi(\beta)=0\}.
\end{align*}
corresponding to the unramified extension (inertia group). 
For $r \in \Z_{\geq 1}$,  one has
\begin{align*}
H^r = \tau \circ \psi(U^r).
\end{align*}
Note that $k^*U^1 = U^1$ as subgroups of $\widehat{K^*}$. It follows that 
$H^0=H^1$ (there is only wild ramification). Since we know the map $\tau \circ \psi$ explicitly due to the Schmid-Witt symbol, we can compute the ramification groups explicitly. 
Recall that we have explicit isomorphisms (Remark \ref{900})
\begin{align*}
\phi_1: U^1=1+Tk[[T]] \to& \prod_{(i,p)=1} W(k) \\
\prod_{(i,p)=1}^{\infty}   \prod_{j=0}^{\infty}  (1- a_{ij} T^i)^{p^j} \mapsto& \left( \sum_{j=0}^{\infty} p^j \sum_{l|i} l [a_{lj}]^{i/l} \right)_i
\end{align*}
and 
\begin{align*}
\phi_2: \prod_{(i,p)=1} W(k) \to& H^0 \\
(a_i)_i \mapsto& \left( c \beta+ \sum_i c_i [T]^{-i} \mapsto \mathrm{Tr}_{W(k)/W(\F_p)} (\sum_i a_i c_i) \right) .
\end{align*}
such that $\phi_2 \circ \phi_1=\psi$. One has for $r \in \Z_{\geq 1}$
\begin{align*}
U^r = \left\{\prod_{(i,p)=1}^{\infty}   \prod_{j=0}^{\infty}  (1- a_{ij} T^i)^{p^j}: ip^j< r \implies a_{ij}=0\right\} \subseteq U^1.
\end{align*}

\begin{theorem} \label{up}
For $r \in \Z_{\geq 1}$, one has 
\begin{align*}
\phi_1(U^r)=(p^{ \lceil  \log_p \left(\frac{r}{i}\right)  \rceil } W(k))_i
\end{align*}
and
\begin{align*}
H^r = \{ \chi \in H^0, v(\chi([T]^{-i})) \geq \lceil \log_p (r/i) \rceil, (i,p)=1 \}.
\end{align*}
\end{theorem}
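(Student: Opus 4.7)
The plan is to prove the first formula $\phi_1(U^r) = \prod_i p^{\lceil \log_p(r/i)\rceil}W(k)$ by an associated-graded argument using the isomorphism $\phi_1$ from Remark \ref{900}, and then translate to $H^r$ through $\phi_2$.

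For one inclusion, an element of $U^r$ admits a representation $\prod_{i,j}(1-a_{ij}T^i)^{p^j}$ with $a_{ij}=0$ whenever $ip^j<r$. The $i$th coordinate of the image under $\phi_1$ is $\sum_j p^j\sum_{l\mid i,(l,p)=1}l[a_{lj}]^{i/l}$, and every nonzero term satisfies $lp^j\geq r$ with $l\mid i$, so $p^j\geq r/l\geq r/i$, forcing $j\geq \lceil\log_p(r/i)\rceil$; since $l$ is a $p$-unit in $W(k)$ this gives $\phi_1(U^r)\subseteq V_r:=\prod_{(i,p)=1}p^{\lceil\log_p(r/i)\rceil}W(k)$. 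For the reverse inclusion, both $U^1$ and $\prod W(k)$ carry complete, Hausdorff decreasing filtrations $\{U^s\}$ and $\{V_s\}$, and $\phi_1$ is a continuous isomorphism respecting them (by the first inclusion); so it suffices to check that the induced map $U^r/U^{r+1}\to V_r/V_{r+1}$ is an isomorphism. Writing $r=i_0p^{j_0}$ uniquely with $(i_0,p)=1$, the classes of $(1-aT^{i_0})^{p^{j_0}}$ for $a\in k$ form a transversal of $U^r/U^{r+1}\cong k$. On the other side, the combinatorial observation is that $\lceil\log_p(r/i)\rceil<\lceil\log_p((r+1)/i)\rceil$ holds iff some integer $j\geq 0$ satisfies $ip^j=r$; by unique factorization this forces $i=i_0$, $j=j_0$. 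Hence only the $i_0$-slot of $V_r/V_{r+1}$ is nontrivial, equal to $p^{j_0}W(k)/p^{j_0+1}W(k)\cong k$. The induced graded map sends $a$ to $i_0p^{j_0}[a]\bmod p^{j_0+1}$, i.e. $i_0a\in k$, and $(i_0,p)=1$ makes this a bijection. Completeness then yields $\phi_1(U^r)=V_r$.

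For the second formula, use $\tau\circ\psi=\phi_2\circ\phi_1$ from Remark \ref{900} to get $H^r=\phi_2(V_r)$. Given $\chi=\phi_2((a_i)_i)\in H^0$, the value on $[T]^{-i}$ is $\chi([T]^{-i})=\mathrm{Tr}_{W(k)/W(\F_p)}(a_i)$, so the condition $(a_i)_i\in V_r$, namely $a_i\in p^{\lceil\log_p(r/i)\rceil}W(k)$, translates (via the $W(\F_p)$-linearity of $\chi$ and the non-degeneracy of the trace form together with the $\phi_2$-identification $H^0\cong\prod W(k)$) to the stated valuation inequalities on $\chi([T]^{-i})$ for all $(i,p)=1$.

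The main obstacle is Step 2: one must (a) carry out the jump analysis of $\lceil\log_p(r/i)\rceil$ carefully, pinning down the unique $(i_0,j_0)$ at which it increases and verifying the jump is by exactly one; and (b) compute the induced map on graded pieces correctly, paying attention to the normalization factor $i_0$ coming from the summand $l\cdot[a_{lj}]^{i/l}$ at $l=i_0$, so that the bijectivity $a\mapsto i_0 a$ on $k$ can be invoked to conclude.
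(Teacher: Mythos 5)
Your proof is correct, and your overall strategy (first inclusion into the target filtration, then reverse inclusion, then translate through $\phi_2$) mirrors the paper's; the place you diverge is in the reverse inclusion. The paper exhibits the image $\phi_1\bigl((1-a\,T^{i})^{p^{j}}\bigr)$ with $j=\lceil\log_p(r/i)\rceil$ and then invokes ``linearity and continuity'' in a single sentence; your version replaces that appeal with an explicit associated-graded argument: you isolate the unique index $(i_0,j_0)$ with $r=i_0p^{j_0}$ at which $\lceil\log_p(r/i)\rceil$ jumps, verify that the jump is by exactly one and occurs only in the $i_0$-slot, and check that the induced map $U^r/U^{r+1}\cong k\to V_r/V_{r+1}\cong k$ is multiplication by $i_0$, hence a bijection. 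This is more careful than what the paper prints. In particular the paper asserts $b_k=0$ for $k\neq i$ in the image of $\phi_1\bigl((1-a_{ij}T^i)^{p^j}\bigr)$, which is a slip: the $k$-coordinate is $p^j\,i\,[a_{ij}]^{k/i}$ whenever $i\mid k$, so there are nonzero ``tails'' in every multiple-of-$i$ coordinate. These tails do not spoil the argument because their valuation $j=\lceil\log_p(r/i)\rceil$ exceeds $\lceil\log_p((r+1)/k)\rceil$ for $k>i$, and your graded-piece computation handles this automatically, whereas the paper's one-line justification glosses over it. What your route buys, therefore, is precision: the filtration isomorphism $U^r/U^{r+1}\to V_r/V_{r+1}$ together with completeness and Hausdorffness gives a clean, gap-free conclusion $\phi_1(U^r)=V_r$ that does not rely on a hand-waved successive approximation. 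The final step (applying $\phi_2$ and using nondegeneracy of $\mathrm{Tr}_{W(k)/W(\F_p)}$) is identical in both; note only that, as with the paper, the condition ``$v(\chi([T]^{-i}))\geq\lceil\log_p(r/i)\rceil$'' should really be read through the $\phi_2$-coordinate $a_i$, i.e.\ as $v(a_i)\geq\lceil\log_p(r/i)\rceil$, equivalently as $v(\chi(c[T]^{-i}))\geq\lceil\log_p(r/i)\rceil$ for all $c\in W(k)$; nondegeneracy of the trace pairing makes the latter equivalent to the valuation bound on $a_i$, but the single value $\chi([T]^{-i})$ does not determine $v(a_i)$ on its own.
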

\begin{proof}
The condition $ip^j < r$ is equivalent to $j < \log_p (r/i)$. It follows that 
$$\phi_1(U^r) \subseteq (p^{ \lceil  \log_p \left(\frac{r}{i}\right)  \rceil } W(k))_i.$$
To prove the other inclusion, we 
fix $i$ with $(i,p)=1$ and take $j=\lceil  \log_p \left(\frac{r}{i}\right)  \rceil$. One checks that $(1-a_{ij}T^i)^{p^j} \in U^r$ and  
$$\phi_1((1-a_{ij}T^i)^{p^j}) = (b_k)_{(k,p)=1}, \ b_i = p^j i [a_{ij}], \ b_k = 0 \ (k\not =i).$$
By linearity and continuity, this proves the other inclusion.
The second result follows by applying $\phi_2$, since $\mathrm{Tr}_{W(k)/W(\F_p)}$ is non-degenerate. 
\end{proof}

See \cite[Theorem 1.1]{THO} for a different description of the upper ramification groups. 

If $L/K$ is Galois with groups $H_1$ and $L'/K$ is an intermediate Galois extension with group $H_1/H_2$, then one has 
\begin{align*}
(H_1/H_2)^r=H_1^r H_2/H_2. 
\end{align*}
Hence this allows us to compute upper ramification groups of intermediate extensions. 

\subsubsection{Conductors and discriminants}

Let $L/K$ be a finite abelian extension. By class field theory, we have a surjective map $\psi_L: K^* \to \Gal(L/K)$ (which is the restriction of the global Artin map). The conductor of $L/K$ is $\mathfrak{f}(L/K)=\pa^i$ with $i$ minimal such that $U^i \subseteq \mathrm{ker}(\psi_L)$. Here we set $U^0=k[[T]]^*=k^* U^1$. If $L=L_1L_2$, then one has 
\begin{align*}
\mathfrak{f}(L/K) = \lcm( \mathfrak{f}(L_1/K), \mathfrak{f}(L_2/K)). 
\end{align*}
Hence to study the conductor of extensions of exponent dividing $n$, it is enough to study the conductor of an extension of the form $K(\wp^{-1}x)/K$ with $x \in W_n(K)$. 
Let $x \in W(K)$. Our goal is to study the ramification in $K(\wp^{-1}x)/K$. We have a natural `valuation' $v$ on $W(k)$ with $v(x_0,x_1,\ldots)=i$ where $i$ is minimal with $x_i \neq 0$. We set $K_n=K(\wp^{-1}\pi_n(x))/K$. We first study the extensions $K_n/K$.

\begin{lemma}
Let $x \equiv  c \beta + \sum_{(i,p)=1} c_i [ T]^{-i} \pmod{\wp W(K)}$ in reduced form. Let 
\begin{align*} 
n_c=\min\{v(c_i):i \},\ n_0= \min\{n_c,v(c)\}.
\end{align*}
Then $K_{n_0}/K_0$ is a trivial extension, $K_{n_c}/K$ is unramified cyclic of degree $p^{n_c-n_0}$ and $K_n/K$ for $n>n_c$ is cyclic of degree $p^{n-n_0}$ with ramification index $p^{n-n_c}$. The extension $K_{n_c}/K_{n_0}$ is a constant field extension, and $K_{n}/K_{n_c}$ for $n \geq n_c$ is totally ramified.
\end{lemma}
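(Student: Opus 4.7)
The strategy is to read off both the degree and the ramification of $K_n/K$ from the reduced form of $x$ together with Theorem \ref{100}. The main preliminary computation is to understand how multiplication by $p$ acts on the reduced form in $W(K)/\wp W(K)$. Since $FV=p$ in $W(K)$, one has
\begin{align*}
\wp(V[T]^{-i}) \;=\; FV[T]^{-i} - V[T]^{-i} \;=\; p[T]^{-i} - V[T]^{-i},
\end{align*}
so $V[T]^{-i} \equiv p[T]^{-i}\pmod{\wp W(K)}$; and on $W(\F_p)=\Z_p$ one has $V=\cdot p$, giving $V(c\beta) \equiv pc\beta \pmod{\wp W(K)}$. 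By additivity of $V$ and the identity $\cdot p = V$ on the quotient $W(K)/\wp W(K)$, multiplication by $p^j$ in the reduced form simply multiplies every coefficient $c$ and $c_i$ by $p^j$.

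Combined with the uniqueness in Proposition \ref{712}, this gives: $\pi_n(x)$ vanishes in $W_n(K)/\wp W_n(K)$ iff $v(c)\geq n$ and $v(c_i)\geq n$ for all $i$, iff $n\leq n_0$. More generally, the order of $\pi_n(x)$ is the smallest $p^j$ with $p^j c\in p^n W(\F_p)$ and $p^j c_i \in p^n W(k)$ for every $i$, namely $p^{n-n_0}$ when $n\geq n_0$. Theorem \ref{11} then yields $[K_n:K]=p^{n-n_0}$, and in particular $K_{n_0}=K=K_0$ is a trivial extension.

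To read off ramification, I would use that, via the Artin map, the inertia subgroup of $\Gal(K_n/K)$ is the image of $U^1 = 1+Tk[[T]]$ under $y\mapsto [\pi_n(x),y)_n = \pi_n[x,y)$. By Theorem \ref{100}, for $y\in U^1$ the value $[x,y)$ is a $W(\F_p)$-linear combination of traces of products involving the $c_i$, so lies in $p^{n_c}W(\F_p)$; conversely, choosing $i_0$ with $v(c_{i_0})=n_c$ and $a\in k$ such that $\mathrm{Tr}_{W(k)/W(\F_p)}(c_{i_0}[a])$ has valuation exactly $n_c$ (possible because $\mathrm{Tr}_{k/\F_p}$, and hence $\mathrm{Tr}_{W(k)/W(\F_p)}$, is non-degenerate), the value $[x,1-aT^{i_0})$ has valuation exactly $n_c$. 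Hence the image of $U^1$ in $W_n(\F_p)$ is $p^{n_c}W_n(\F_p)$, of order $p^{\max(0,n-n_c)}$. Similarly $[x,T) = c\,\mathrm{Tr}_{W(k)/W(\F_p)}(\beta)$ has valuation $v(c)$ (the trace of $\beta$ being a unit by choice of $\alpha$), and this accounts for the remaining factor so that the total image equals $p^{n_0}W_n(\F_p)$, matching the degree computation.

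Assembling the pieces: for $n\leq n_c$ the inertia is trivial, so $K_n/K$ is unramified cyclic of degree $p^{n-n_0}$ (giving the statement about $K_{n_c}/K$ and, by taking $n=n_c$ together with $K_{n_0}=K$, the fact that $K_{n_c}/K_{n_0}$ is a constant field extension). For $n>n_c$ the ramification index is $p^{n-n_c}$ and the residue degree is $p^{n-n_0}/p^{n-n_c}=p^{n_c-n_0}$; since $K_{n_c}\subseteq K_n$ and $[K_{n_c}:K]$ already equals the residue degree, $K_{n_c}$ is the maximal unramified subextension of $K_n/K$, so $K_n/K_{n_c}$ is totally ramified. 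The main obstacle is the opening computation identifying multiplication by $p$ on the reduced form; once $V[T]^{-i}\equiv p[T]^{-i}\pmod{\wp W(K)}$ is in hand, the rest is a direct application of Theorem \ref{100} and local class field theory.
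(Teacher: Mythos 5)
Your strategy is sound and in fact fills in the details that the paper dispatches with a one-line ``it is easy to see'': the degree computation via multiplication by $p$ on the reduced form is exactly the paper's argument (that $\pi_{n_0}(x)\in\wp W_{n_0}(K)$), and reading off the inertia group from the image of $U^1$ under $[\pi_n(x),\cdot)_n$ is the right way to make the ramification statement precise using the tools established earlier in the section. The consistency check at the end (total image equals $p^{n_0}W_n(\F_p)$, matching the degree) is a nice touch.

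There is one genuine gap in the converse step. You assert that, having chosen $i_0$ with $v(c_{i_0})=n_c$ and $a\in k$ with $v\bigl(\mathrm{Tr}_{W(k)/W(\F_p)}(c_{i_0}[a])\bigr)=n_c$, the value $[x,1-aT^{i_0})$ has valuation exactly $n_c$. This does not follow from the formula: by Theorem \ref{100}, one has
\begin{align*}
[x,1-aT^{i_0}) = -i_0\,\mathrm{Tr}_{W(k)/W(\F_p)}\Bigl(\sum_{k\geq 1,\,(k,p)=1} c_{i_0 k}\,[a]^k\Bigr),
\end{align*}
so besides the $k=1$ term there are cross-terms coming from all $c_{i_0 k}$ with $k>1$ coprime to $p$, each of valuation $\geq n_c$, and a priori these can cancel the leading term modulo $p^{n_c+1}$. (One can cook up explicit coefficient patterns over small $k$ where, for a fixed non-maximal $i_0$, the reduction modulo $p^{n_c+1}$ vanishes for \emph{every} $a\in k$.) The fix is cheap: since $c_i\to 0$, there is a \emph{largest} $i_0$ with $v(c_{i_0})=n_c$; with that choice $v(c_{i_0 k})>n_c$ for every $k>1$, the cross-terms disappear modulo $p^{n_c+1}$, and your non-degeneracy argument for the $k=1$ term then gives valuation exactly $n_c$. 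Alternatively, rather than using a single element $1-aT^{i_0}$, one can invoke the isomorphism $\phi_1\colon U^1\to\prod_{(i,p)=1}W(k)$ from Remark \ref{900} to produce $y\in U^1$ whose image is supported at $i_0$ alone, and then use surjectivity of $\mathrm{Tr}_{W(k)/W(\F_p)}$ to hit all of $p^{n_c}W(\F_p)$. Either patch closes the gap; the rest of your argument stands.
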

\begin{proof}
One has $\pi_{n_0}(x) \in \wp W_n(K)$, and hence $K_{n_0}/K$ is trivial. It is easy to see that $K_{n_c}/K_{n_0}$ just extends the residue field, and after that, it becomes totally ramified.
\end{proof}
We call $x \in W(K)$ primitive if $n_0$ as in the above theorem is $0$. In general one can find $y \in W(K)$ with $p^{n_0} \overline{y} =\overline{x} \in W(K)/\wp W(K)$ and such an $y$ is primitive and one has $K(\wp^{-1}\pi_n(y))=K_{n+n_0}$. 
We set $\mathfrak{f}_n=\mathfrak{f}(K_n/K)$. In other words, it is equal to $\pa^i$ with $i$ minimal such that for all $y \in U^i$ one has $[\pi_n(x),y)_n=0$, equivalently, $[x,y) \in p^n W(\F_p)$ for all $y \in U^i$. 
Let us use the above formula to compute conductors (see also \cite[Corollary 5.1]{THO} or \cite{SCH1}). 

\begin{proposition} \label{715}
Let $x \in W(K)$ with $x \equiv c \beta + \sum_{(i,p)=1} c_i [ T]^{-i} \pmod{\wp W(K)}$ with $n_0$ as above. Let $y \in W(\overline{K})$ be such that $\wp y=x$. Consider the isomorphism
\begin{align*}
H'=\Gal(K(\wp^{-1}x)/K) \to& \Hom(\langle x \rangle, W(\F_p)).
\end{align*}
Let $r \in \Z_{\geq 1}$ and set
\begin{align*}
b= \left\{ \begin{array}{cc} 
\min\{ v(c_i)+ \lceil  \log_p \left(\frac{r}{i}\right) \rceil: (i,p)=1 \}  & \textrm{if } r \geq 1 \\
\min\{ v(c_i) : (i,p)=1 \} & \textrm{if } r=0.
\end{array} \right.
\end{align*}
Under this isomorphism $H'^r$ corresponds to 
\begin{align*}
\left\{ \tau: v(\tau(x)) \geq b \right\} \subseteq \Hom(\langle x \rangle, W(\F_p)).
\end{align*}
Let $n \in \Z_{\geq 1}$. 
One has $\mathfrak{f}_n=\mathfrak{f}(K(\wp^{-1} \pi_n(x))/K)=\pa^{u_n}$ where
\begin{align*}
u_n = \left\{  \begin{array}{cc} 1+ \max\{ ip^{n-v(c_i)-1}  : (i,p)=1: v(c_i)<n\} & \mathrm{if\ }\exists i: v(c_{i})<n \\
0 & \mathrm{otherwise}. \end{array} \right.
\end{align*}
\end{proposition}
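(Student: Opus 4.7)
The plan is to transport the explicit description of the upper ramification filtration on $H = \Hom(W(K)/\wp W(K), W(\F_p))$ from Theorem \ref{up} through the restriction map $H \twoheadrightarrow H' = \Hom(\langle x \rangle, W(\F_p))$, exploiting the functoriality $(H/H_2)^r = H^r H_2/H_2$ of the upper numbering. Recall from Remark \ref{900} that a character $\chi \in H^0$ is encoded by a tuple $(a_i)_{(i,p)=1} \in \prod W(k)$ via $\chi(b[T]^{-i}) = \mathrm{Tr}_{W(k)/W(\F_p)}(b a_i)$ and $\chi(\beta) = 0$; Theorem \ref{up} then says $\chi \in H^r$ (for $r \geq 1$) iff $v(a_i) \geq \lceil \log_p(r/i) \rceil$ for every $(i,p)=1$.

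Using the reduced decomposition $x \equiv c\beta + \sum c_i[T]^{-i}$, the restriction $\tau = \chi|_{\langle x \rangle}$ is determined by
\begin{align*}
\tau(x) \;=\; \chi(x) \;=\; \sum_{(i,p)=1} \mathrm{Tr}_{W(k)/W(\F_p)}(c_i a_i),
\end{align*}
and each summand lies in $p^{v(c_i)+\lceil \log_p(r/i)\rceil} W(\F_p) \subseteq p^b W(\F_p)$, giving the inclusion $\mathrm{Im}(H^r) \subseteq \{\tau : v(\tau(x)) \geq b\}$. For the reverse inclusion, pick $i_0$ realizing $b$ and concentrate the character at that slot by setting all $a_i = 0$ for $i \neq i_0$ and $a_{i_0} = p^{\lceil \log_p(r/i_0)\rceil} w$. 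Writing $c_{i_0} = p^{v(c_{i_0})} u$ with $u \in W(k)^*$, one obtains $\mathrm{Tr}(c_{i_0}a_{i_0}) = p^b \mathrm{Tr}(uw)$, and non-degeneracy of $\mathrm{Tr}_{W(k)/W(\F_p)}$ combined with $u$ being a unit makes $w \mapsto p^b \mathrm{Tr}(uw)$ surjective onto $p^b W(\F_p)$. The case $r = 0$ is identical, using $H^0 = \{\chi:\chi(\beta)=0\}$ with no constraint on the $a_i$, yielding $b = \min_i v(c_i)$.

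For the conductor, apply the first part to $\pi_n(x)$: the coefficients become $\pi_n(c_i)$, whose effective valuation is $\min(v(c_i),n)$, so only indices with $v(c_i) < n$ contribute. Since $\mathrm{Gal}(K_n/K)$ has exponent dividing $p^n$, a nonzero element of $\Hom(\langle\pi_n(x)\rangle, W_n(\F_p))$ takes a value of valuation at most $n-1$ on $\pi_n(x)$, so $H'^r = 1$ iff $b \geq n$. This is equivalent to $\lceil \log_p(r/i)\rceil \geq n - v(c_i)$ for every $i$ with $v(c_i) < n$, i.e.\ to $r > i p^{n - v(c_i) - 1}$. The smallest integer $r$ with this property is $u_n = 1 + \max\{i p^{n-v(c_i)-1} : (i,p)=1,\ v(c_i) < n\}$; if no such $i$ exists, $K_n/K$ is unramified and $u_n = 0$.

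The main technical point is the surjectivity step: one must exhibit a character in $H^r$ hitting an arbitrary target in $p^b W(\F_p)$ while respecting all the valuation constraints from Theorem \ref{up}. Concentrating the character at the single index $i_0$ achieving the minimum $b$ reduces this to one application of non-degeneracy of the trace. Beyond this, the argument is bookkeeping, with one minor interpretational point: the quantity $v(\chi([T]^{-i}))$ in Theorem \ref{up} must be read as the valuation of the Witt-coordinate $a_i \in W(k)$ under the identification of Remark \ref{900}, not as the $W(\F_p)$-valuation of the scalar $\chi([T]^{-i}) = \mathrm{Tr}(a_i)$.
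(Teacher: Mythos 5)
Your argument is correct and follows the paper's own route: push the explicit description of $H^r$ from Theorem \ref{up} down to $H'$ via the functoriality $(H/H_2)^r = H^r H_2/H_2$, bound $v(\tau(x))$ term-by-term, establish the reverse inclusion by concentrating a character at the minimizing index $i_0$, and then solve for the smallest $r$ making the image in $\Gal(K_n/K)$ trivial. The paper's proof asserts the first half in one line ("follows from the computation of $H'^r$ in Theorem \ref{up}"), so the restriction and surjectivity details you supply—as well as the observation that $v(\chi([T]^{-i}))$ in Theorem \ref{up} must be interpreted as the valuation of the Witt coordinate $a_i \in W(k)$ under Remark \ref{900}, not of the scalar $\mathrm{Tr}_{W(k)/W(\F_p)}(a_i)$—make explicit exactly what the paper leaves implicit.
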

\begin{proof}
The first statement follows from the computation of $H'^r$ in Theorem \ref{up}. 
The exponent of the conductor of $K_n/K$ is $0$ if for all $i$ one has $v(c_i) \geq n$. Otherwise, it is the smallest integer $r \geq 1$ such that 
\begin{align*}
\min_{} \left\{ v(c_i)+ \lceil  \log_p \left(\frac{r}{i}  \right) \rceil: (i,p)=1 \right\} \geq n.
\end{align*} 
That is, the minimal $r \geq 1$ such that for all $i$ one has 
\begin{align*}
v(c_i)+ \lceil  \log_p \left(\frac{r}{i}\right)  \rceil \geq n,
\end{align*}
equivalently $\lceil  \log_p \left(\frac{r}{i}\right)  \rceil \geq n-v(c_i)$.
Fix an integer $i$. The above condition is automatically satisfied if $v(c_i)\geq n$. We now assume that $v(c_i)<n$. Note that $\log_p(r/i)$ is a strictly increasing function of $r$. One has $\log_p(r/i) =n- v(c_i)-1$ at the integer $r=ip^{n-v(c_i)-1}$. Hence one finds $ \lceil  \log_p \left(\frac{r}{i}\right)  \rceil \geq n-v(c_i)$ if and only if 
\begin{align*}
r \geq i p^{n-v(c_i)-1}+1
\end{align*}
and the results follow.

The result for $n_0>0$ follows in a similar way, by replacing $x$ by `$x/p^{n_0}$'. 
\end{proof}

As expected, the conductor is $\mathfrak{f}_n=\pa^0$ for $n \leq n_c$. For $n>n_c$ we find 
\begin{align*}
u_n \geq 1 + p^{n-n_c-1} \geq 2
\end{align*}
That $u_n\geq 2$, means that the ramification, as expected, is wild. 

Let us now compute the discriminant $\Delta_n$, a power of $\pa$, of $K_n/K_0$. See \cite[Chapter III]{SE1} for a definition involving the norm of the different. 

\begin{proposition} \label{144}
Let $n \in \Z_{\geq 0}$ and let $n_0$ be as above. Then 
\begin{align*}
\Delta_n = \left\{ \begin{array}{cc} \prod_{i=n_0}^n \mathfrak{f}_i^{\varphi(p^{i-n_0})}  & \textrm{if }n > n_0 \\ \pa^0 & \textrm{if }n \leq n_0 \end{array} \right.
\end{align*}
with $\varphi(p^s)=p^{s-1}(p-1)$. 
\end{proposition}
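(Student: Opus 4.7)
The plan is to apply the conductor-discriminant formula to the abelian extension $K_n/K$. If $n \leq n_0$, the preceding lemma gives $K_n = K$, so $\Delta_n = \pa^0$ and there is nothing to show; assume $n > n_0$ from now on.

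By the preceding lemma, $G = \Gal(K_n/K)$ is cyclic of order $p^{n-n_0}$, with unique tower of subfields $K = K_{n_0} \subsetneq K_{n_0+1} \subsetneq \cdots \subsetneq K_n$ where $[K_{n_0+j}:K] = p^j$. The dual group $\widehat{G}$ is therefore cyclic of order $p^{n-n_0}$, and for each $0 \leq j \leq n - n_0$ it contains exactly $\varphi(p^j)$ characters of exact order $p^j$. A character $\chi \in \widehat{G}$ of order $p^j$ has $\ker\chi = \Gal(K_n/K_{n_0+j})$ and therefore factors as a faithful character of $\Gal(K_{n_0+j}/K)$.

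The key bookkeeping step is that, for the cyclic local extension $K_{n_0+j}/K$, the Artin conductor of any faithful character of $\Gal(K_{n_0+j}/K)$ equals the extension conductor $\mathfrak{f}_{n_0+j}$. This follows directly from the definition of $\mathfrak{f}(L/K)$ recalled before Proposition \ref{715}: the conductor is the smallest $\pa^r$ with $U^r \subseteq \ker\psi_L$, and a faithful character $\chi$ is trivial on $\psi_L(U^r)$ if and only if $U^r \subseteq \ker\psi_L$. Granting this identification, the conductor-discriminant formula for abelian local extensions (see \cite{SE1}) gives
\begin{align*}
\Delta_n \;=\; \prod_{\chi \in \widehat{G}} \mathfrak{f}(\chi) \;=\; \prod_{j=0}^{n-n_0} \mathfrak{f}_{n_0+j}^{\varphi(p^j)} \;=\; \prod_{i=n_0}^{n} \mathfrak{f}_i^{\varphi(p^{i-n_0})},
\end{align*}
after the substitution $i = n_0+j$ and using $\varphi(p^0) = 1$ (the factor $\mathfrak{f}_{n_0} = \pa^0$ coming from the trivial character contributes trivially, as expected).

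The only non-trivial step is the character-vs.-extension conductor identification above; everything else is counting characters and invoking the conductor-discriminant formula. I do not expect any real obstacle beyond recalling the standard statement.
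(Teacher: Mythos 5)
Your proof is correct and follows essentially the same route as the paper: both invoke the conductor-discriminant (F\"uhrerdiskriminanten\-produkt) formula and then count the $\varphi(p^j)$ characters of exact order $p^j$, each contributing $\mathfrak{f}_{n_0+j}$. The paper's one-line proof simply writes the formula directly in the form $\Delta_n = \prod_\chi \mathfrak{f}(K_n^{\ker\chi}/K)$, which already builds in the character-vs.-extension conductor identification that you spell out separately; your exposition is more detailed but the substance is identical.
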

\begin{proof}
The F\"uhrerdiskriminanten\-produktformel \cite{SE1} gives
\begin{align*}
\Delta_{n} =\prod_{\chi \in \Hom(\Gal(K_n/K),\C^*)} \mathfrak{f}(K_n^{\mathrm{ker}(\chi)}/K) =  \prod_{i=n_0}^n \mathfrak{f}_i^{\varphi(p^{i-n_0})} .
\end{align*}
\end{proof}

\section{$\Z_p$-extensions of function fields} \label{s5}

Let $k$ be a finite field. Let $K=K_0$ be a function field over $k$ (a finitely generated field extension of $k$ of transcendence degree $1$) with full constant field $k$. Let $x=(x_0,x_1,\ldots) \in W(K)$. This Witt vector defines a field extension $K_{\infty}/K$. For simplicity, we assume that $x_0 \not \in \wp W(K)$. Set $K_i=K(\wp^{-1} \pi_i(x))$. One then has a chain of field $K=K_0 \subseteq K_1 \subseteq K_2 \subseteq \ldots \subset K_{\infty}$ with $\Gal(K_n/K) \cong \Z/p^n\Z$ and $\Gal(K_{\infty}/K) \cong \Z_p$. 

\subsection{Genus formula}

Let $\pa$ be a place of $K$ with residue field $k_{\pa}$ and uniformizer $\pi_{\pa}$. Then, locally, this extension is given by $x=(x_0,x_1,\ldots) \in W(K_{\pa})$ where $K_{\pa} \cong k_{\pa}((\pi_{\pa}))$ is the completion at $\pa$ (by the Cohen structure theorem). Let $\alpha_{\pa} \in k_{\pa}$ with $\mathrm{Tr}_{k_{\pa}/\F_p}(\alpha_{\pa}) \neq 0$. Set $\beta_{\pa}=[\alpha_{\pa}] \in W(k_{\pa})$. One has 
$$x \equiv c_{\pa} \beta_{\pa} + \sum_{(i,p)=1} c_{\pa,i} [\pi_{\pa}]^{-i} \pmod{\wp W(K_{\pa})}$$
with $c_{\pa} \in W(\F_p)$ and $c_{\pa,i} \in W(k_{\pa})$ and $c_{\pa,i} \to 0$ as $i \to -\infty$ (Proposition \ref{712}). We set 
\begin{align*}
n_{\pa,c}=\min \{v(c_{\pa,i}):i \},\ n_{\pa,0}= \min\{n_{\pa,c},v(c_{\pa})\}.
\end{align*}

Proposition \ref{715} then shows that the conductor at $\pa$ of $K_n/K$ is equal to $\mathfrak{f}_{\pa,n} =\pa^{u_{\pa,n}}$ with
\begin{align*}
u_{\pa,n} = \left\{  \begin{array}{cc} 1+ \max\{ ip^{n-v(c_{\pa,i})-1}  : (i,p)=1: v(c_{\pa,i})<n\} & \mathrm{if\ }\exists i: v(c_{\pa,i})<n \\
0 & \mathrm{otherwise}. \end{array} \right.
\end{align*}
The conductor of $K_n/K$ is 
\begin{align*}
\mathfrak{f}_n=\mathfrak{f}(K_n/K) = \prod_{\pa} \mathfrak{f}_{\pa,n},
\end{align*}
which is a finite product. 

Let $g_n$ be the genus of $K_n$. We let $n_c$ be maximal such that $K_{n_c}/K$ is a constant field extension. For all $\pa$ this implies $\mathfrak{f}_{\pa,i}=\pa^0$ for $i \leq n_c$.

\begin{proposition} \label{444}
Let $\Delta_n$ be the discriminant of $K_n/K$. One has:
\begin{align*}
\Delta_n = \prod_{\pa} \prod_{i=0}^n \mathfrak{f}_{\pa,i}^{ \varphi(p^{i } ) }  
\end{align*}
\end{proposition}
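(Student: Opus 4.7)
The plan is to apply the global Führerdiskriminantenproduktformel exactly as in the local case (Proposition \ref{144}), and then reshuffle the product over characters into a double product over places and levels using the multiplicativity $\mathfrak{f}_n = \prod_{\pa} \mathfrak{f}_{\pa,n}$ already recorded just above the statement.

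First I would write
\begin{align*}
\Delta_n = \prod_{\chi \in \Hom(\Gal(K_n/K),\C^*)} \mathfrak{f}\bigl(K_n^{\ker(\chi)}/K\bigr),
\end{align*}
which is the Führerdiskriminantenproduktformel for the abelian extension $K_n/K$ (the same tool cited in Proposition \ref{144}). Since $\Gal(K_n/K) \cong \Z/p^n\Z$, the characters of $\Gal(K_n/K)$ fall into layers by order: for each $i\in\{0,1,\ldots,n\}$ there are exactly $\varphi(p^i)$ characters of order $p^i$, and every such character has kernel equal to the unique subgroup of index $p^i$, whose fixed field is the intermediate field $K_i$. Hence $\mathfrak{f}(K_n^{\ker(\chi)}/K) = \mathfrak{f}(K_i/K) = \mathfrak{f}_i$ for each character of order $p^i$, and the formula becomes
\begin{align*}
\Delta_n = \prod_{i=0}^n \mathfrak{f}_i^{\varphi(p^i)}.
\end{align*}

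Second, I would substitute the place-by-place factorization $\mathfrak{f}_i = \prod_{\pa} \mathfrak{f}_{\pa,i}$ established from the local formula for $\mathfrak{f}_{\pa,i}$ via $u_{\pa,i}$, and interchange the two finite products to obtain
\begin{align*}
\Delta_n = \prod_{i=0}^n \Bigl(\prod_{\pa} \mathfrak{f}_{\pa,i}\Bigr)^{\varphi(p^i)} = \prod_{\pa} \prod_{i=0}^n \mathfrak{f}_{\pa,i}^{\varphi(p^i)},
\end{align*}
which is the claimed identity. Note that the terms with $i \leq n_{\pa,c}$ contribute trivially at $\pa$ since $\mathfrak{f}_{\pa,i} = \pa^0$ there, so there is no inconsistency with the local statement in Proposition \ref{144}, which starts the local product at $i = n_{\pa,0}$.

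The only delicate point is the input from the Führerdiskriminantenproduktformel in the global function field setting; this is classical (see \cite[Chapter III]{SE1}), and apart from citing it the argument is purely combinatorial grouping of characters of $\Z/p^n\Z$ by order together with the multiplicativity of the conductor. There is no genuine obstacle, only bookkeeping.
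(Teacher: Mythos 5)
Your proof is correct, but it takes a genuinely different route from the paper's. The paper builds the global discriminant place-by-place: it writes $\Delta_n = \prod_{\pa} \Delta_{\pa,n}^{p^{\min(n,n_{\pa,0})}}$, where $\Delta_{\pa,n}$ is the local discriminant computed in Proposition~\ref{144} via the local F\"uhrerdiskriminantenproduktformel, and the exponent $p^{\min(n,n_{\pa,0})}$ accounts for the number of primes of $K_n$ lying above $\pa$. It then massages the exponents using $\varphi(p^{i-n_{\pa,0}})\cdot p^{n_{\pa,0}} = \varphi(p^i)$ and extends the inner product down to $i=0$ using the triviality of $\mathfrak{f}_{\pa,i}$ for $i \leq n_{\pa,0}$. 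You instead apply the F\"uhrerdiskriminantenproduktformel globally to $K_n/K$ from the outset, group the characters of $\Z/p^n\Z$ by order to get $\Delta_n = \prod_{i=0}^n \mathfrak{f}_i^{\varphi(p^i)}$, and only then decompose each $\mathfrak{f}_i$ into local factors. Your approach is tidier in that it never needs to count primes above $\pa$ nor track $n_{\pa,0}$ at all; the paper's approach has the pedagogical advantage of making the local-to-global passage and the splitting behavior at each place completely explicit, which is convenient because the local ingredients (Proposition~\ref{144} and the conductor formulas) have just been established. Both yield the stated identity; there is no gap in yours.
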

\begin{proof}
A prime $\pa$ of $K$ has $p^{\min(n,n_{\pa,0})}$ extensions in $K_n$. We then use \cite[Chapter III]{SE1} and Proposition \ref{144} to compute the discriminant, since $\mathfrak{f}_{\pa,i}=\pa^0$ for $i \leq n_{\pa,0}$:
\begin{align*}
\Delta_n = \prod_{\pa} \Delta_{\pa,n}^{p^{\min(n,n_{\pa,0})}} =&  \prod_{\pa:\ n >n_{\pa,0}} \prod_{i=n_{\pa,0}+1}^n \mathfrak{f}_{\pa,i}^{ \varphi(p^{i-n_{\pa,0}}) \cdot p^{n_{\pa,0}}} \\
=& \prod_{\pa:\ n >n_{\pa,0}} \prod_{i=n_{\pa,0}+1}^n \mathfrak{f}_{\pa,i}^{ \varphi(p^{i } ) } \\
=&  \prod_{\pa} \prod_{i=0}^n \mathfrak{f}_{\pa,i}^{ \varphi(p^{i } ) }.
\end{align*}
\end{proof}
The following result was already obtained in \cite{SCH1}. 

\begin{theorem} \label{831}
One has:
\begin{align*}
p^{\min\{n_c,n \}}(2 g_n - 2) =& p^{n} ( 2 g_0 - 2 ) + \sum_{\pa} [k_{\pa}:k] \sum_{i=0}^n \varphi(p^i) u_{\pa,i}.
\end{align*}
\end{theorem}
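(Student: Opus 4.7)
The plan is to apply the Riemann--Hurwitz formula, but factored through the maximal constant subextension, and then to translate the resulting different into the discriminant already computed in Proposition \ref{444}. Let $k_n$ denote the constant field of $K_n$, so that $[k_n:k] = p^{\min(n_c,n)}$, and set $F_n := K \cdot k_n$. Then $F_n/K$ is an unramified constant field extension preserving the genus ($g_{F_n} = g_0$), while $K_n/F_n$ is a separable geometric extension of degree $p^{n-\min(n_c,n)}$ having $k_n$ as common constant field.

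First I would invoke transitivity of the different: since $\mathfrak{d}_{F_n/K} = 0$, one has $\mathfrak{d}_{K_n/K} = \mathfrak{d}_{K_n/F_n}$ as divisors on $K_n$. Riemann--Hurwitz applied to $K_n/F_n$ then gives
\begin{align*}
2g_n - 2 = p^{n-\min(n_c,n)}\,(2g_0 - 2) + \deg_{k_n}\!\big(\mathfrak{d}_{K_n/F_n}\big),
\end{align*}
where $\deg_{k_n}$ is measured against the constant field $k_n$. Multiplying through by $p^{\min(n_c,n)} = [k_n:k]$ and using the identity $[k_n:k]\,\deg_{k_n} = \deg_k$ on $K_n$-divisors converts this to
\begin{align*}
p^{\min(n_c,n)}(2g_n - 2) = p^n(2g_0 - 2) + \deg_k\!\big(\mathfrak{d}_{K_n/K}\big).
\end{align*}

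Next, I would pass from the different on $K_n$ to the discriminant on $K$. For a place $P$ of $K_n$ above $\pa$ with residue degree $f$ the norm takes $P$ to $f\pa$, so $\deg_k(P) = f[k_\pa:k] = \deg_k\!\big(N_{K_n/K}(P)\big)$, and by additivity $\deg_k(\mathfrak{d}_{K_n/K}) = \deg_k(\Delta_n)$. Substituting Proposition \ref{444}, namely $\Delta_n = \prod_\pa \prod_{i=0}^n \pa^{u_{\pa,i}\,\varphi(p^i)}$, yields
\begin{align*}
\deg_k(\Delta_n) = \sum_{\pa} [k_\pa:k] \sum_{i=0}^n \varphi(p^i)\, u_{\pa,i},
\end{align*}
and combining with the previous display produces the theorem.

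The only real subtlety is the degree bookkeeping: one has to pass correctly between $\deg_{k_n}$ (natural for Riemann--Hurwitz on $K_n/F_n$) and $\deg_k$ (used in Proposition \ref{444}), and one must not double-count the trivial different of the constant subextension. Once these are handled, the rest is mechanical.
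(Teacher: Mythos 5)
Your proof is correct and follows essentially the same route as the paper: apply Riemann--Hurwitz to relate $g_n$ to $g_0$ and the different, then identify $\deg_k \mathfrak{d}_{K_n/K}$ with $\deg_k \Delta_n$ and invoke Proposition \ref{444}. The only difference is that the paper cites the constant-field-aware Riemann--Hurwitz formula directly (from Stichtenoth or Villa Salvador), whereas you derive it by factoring through $F_n = K\cdot k_n$ and bookkeeping degrees; this is a clean unfolding of the citation rather than a distinct argument.
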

\begin{proof}
Let $\mathfrak{D}_n$ be the different of $K_n/K$. 
Let $k_n$ be the full constant field of $K_n$. Then the Riemann-Hurwitz formula gives (\cite{ST} or \cite[Corollary 9.4.3]{SALV}):
\begin{align*}
[k_n:k](2 g_n - 2) =& p^{n} ( 2 g_0 - 2 ) + \deg_k \mathfrak{D}_n.
\end{align*}
One has $[k_n:k]=p^{\min \{n_c,n \}}$ and $\deg_k \mathfrak{D}_n=\deg_k \Delta_n$ as in Proposition \ref{444}. The result follows.
\end{proof}

\subsection{Genus lower bounds}

Let $n_u$ be maximal such that $K_{n_u}/K$ is unramified. One has $n_u \geq n_c$. 

\begin{corollary} \label{15b}
Let $n \geq n_u$. The following statements hold:

\begin{enumerate}
	\item
\begin{align*}
p^{n_c} (2g_n-2)  \geq p^n (2g_0-2) +p^n-p^{n_u}+ p^{n_u} \frac{p^{2(n-n_u)}-1}{p+1} .
\end{align*}

\item   
\begin{align*}
\liminf_{n \to \infty} \frac{g_n}{p^{2n}} \geq \frac{1}{2p^{n_u+n_c}(p+1)}.
\end{align*}

\item  For any $\epsilon>0$,  there is an integer $m$ such that for all $n \geq m$ one has
\begin{align*}
g_n \geq \frac{p^{2n-n_u-n_c}}{2(p+1)+\epsilon} \geq \frac{p^{2n-n_u-n_c-1}}{3+\epsilon}.
\end{align*}
\end{enumerate}
\end{corollary}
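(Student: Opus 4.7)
The plan is to apply the genus formula of Theorem \ref{831} and lower-bound the conductor contribution coming from a single prime where ramification first appears.

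First, since $n_u$ is maximal with $K_{n_u}/K$ unramified, there exists a prime $\pa^*$ of $K$ such that $K_{n_u+1}/K$ is ramified at $\pa^*$. Translating through Proposition \ref{715}, this means that $m := \min\{v(c_{\pa^*,i}) : (i,p)=1,\ i\ge 1\}$ equals $n_u$ exactly. In particular, for every $n>n_u$ there is some $i_0$ with $(i_0,p)=1$ and $v(c_{\pa^*,i_0})=n_u$, so Proposition \ref{715} yields the lower bound
\begin{align*}
u_{\pa^*,n} \;\geq\; 1 + p^{\,n-n_u-1},
\qquad n>n_u.
\end{align*}
All other conductor exponents are non-negative, and the factor $[k_{\pa^*}:k]\ge 1$, so in Theorem \ref{831} one can drop all primes other than $\pa^*$ and use this bound.

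Next I would insert this into Theorem \ref{831}. Since $n\ge n_u\ge n_c$, the left side is $p^{n_c}(2g_n-2)$. The contribution to be estimated is
\begin{align*}
\sum_{i=n_u+1}^{n}\varphi(p^i)(1+p^{i-n_u-1})
 \;=\;(p^n-p^{n_u}) + \sum_{i=n_u+1}^{n}(p-1)p^{i-1}\cdot p^{i-n_u-1},
\end{align*}
and the second sum is the geometric series
\begin{align*}
\frac{p-1}{p^{n_u+2}}\sum_{i=n_u+1}^{n}p^{2i}
\;=\;\frac{p^{n_u}\bigl(p^{2(n-n_u)}-1\bigr)}{p+1}.
\end{align*}
Substituting into Theorem \ref{831} and using $(2g_0-2)\ge -2$ only if needed (one can actually keep $2g_0-2$ as is) gives statement (i):
\begin{align*}
p^{n_c}(2g_n-2)\;\geq\; p^n(2g_0-2)+p^n-p^{n_u}+\frac{p^{n_u}\bigl(p^{2(n-n_u)}-1\bigr)}{p+1}.
\end{align*}

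For (ii), dividing by $2 p^{n_c} p^{2n}$ and letting $n\to\infty$, every term on the right is $O(p^{-n})$ except the last, whose leading contribution equals
\begin{align*}
\frac{1}{2 p^{n_c}}\cdot\frac{p^{n_u}\cdot p^{2(n-n_u)}}{(p+1)\,p^{2n}}\;=\;\frac{1}{2(p+1)\,p^{n_u+n_c}},
\end{align*}
giving the stated $\liminf$. For (iii), fix $\varepsilon>0$; by (ii), for all sufficiently large $n$,
\begin{align*}
g_n\;\geq\;\frac{p^{2n}}{2(p+1)\,p^{n_u+n_c}+\varepsilon\, p^{n_u+n_c}}
\;=\;\frac{p^{2n-n_u-n_c}}{2(p+1)+\varepsilon}.
\end{align*}
The final inequality $\tfrac{1}{2(p+1)+\varepsilon}\ge \tfrac{1}{p(3+\varepsilon)}$ follows from $2(p+1)\le 3p$ for $p\ge 2$, which converts the exponent $2n-n_u-n_c$ into $2n-n_u-n_c-1$ at the cost of an extra factor of $p$ in the denominator. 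The only mildly delicate step is the first one — confirming that the infimum of $v(c_{\pa^*,i})$ over $i$ coprime to $p$ is attained and equals $n_u$ — but this is exactly how Proposition \ref{715} characterizes the ramified level at $\pa^*$, so no real obstacle arises.
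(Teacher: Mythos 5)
Your proof is correct and follows essentially the same route as the paper: apply Theorem \ref{831}, isolate a prime $\pa^*$ that first ramifies at level $n_u+1$, use Proposition \ref{715} to get $u_{\pa^*,n}\geq 1+p^{n-n_u-1}$, drop all other nonnegative contributions, and sum the resulting geometric series. The paper phrases this as a minimization (``the smallest genus is obtained if only one prime ramifies with $[k_{\pa}:k]=1$ and $u_{\pa,n}=1+p^{n-n_u-1}$''), whereas you phrase it as a lower bound via the witness prime $\pa^*$ — these are the same argument, with yours being marginally more explicit about why the witness exists and why $\min_i v(c_{\pa^*,i})=n_u$.
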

\begin{proof}
We try to make the genus as small as possible in the genus formula. The smallest genus is obtained if only one prime $\pa$ is ramified with $[k_{\pa}:k]=1$, such that $u_{\pa,n}=1+p^{n-n_u-1}$ for $n>n_u$, and $u_{\pa,n}=0$ for $n \leq n_u$ (see Proposition \ref{715}).
One finds for $n \geq n_u$ by Theorem \ref{831}:
\begin{align*}
p^{n_c} (2g_n-2)  \geq&  p^n (2g_0-2) + \sum_{i=n_u+1}^n \varphi(p^i) \left( 1 + p^{i-n_u-1} \right) \\
=& p^n (2g_0-2) +p^n-p^{n_u}+ p^{n_u} \frac{p^{2(n-n_u)}-1}{p+1}.  
\end{align*}
The first part is proved.
The second and third part follow by looking at the last term of the formula from the first part.
\end{proof}

\begin{remark} \label{goa}
The bounds in Corollary \ref{15b} are often sharp when the $p$-part of the class group of $K$ is $0$. In particular, the bounds are sharp when $K=k(X)$, the projective line. We will give explicit examples later.

Gold and Kisilevsky in  \cite[Theorem 1]{GOL} state that for large $n$, if $n_c=0$:
\begin{align*}
g_n \geq \frac{p^{2(n-n_u)-1}}{3}.
\end{align*}
This result contains a small error which makes the result incorrect for $p=2$, $g_0=0$ (one really needs the $\epsilon$ in that case, see Proposition \ref{hulu2}). Secondly, in their proof they reduce to the case $n_u=0$, but they forget that if $n_u>0$, then more primes must ramify and hence the genus will grow faster.

Assume from now on that $n_u=0$. In fact Gold and Kisilevsky prove in an intermediate step
\begin{align*}
\liminf_{n \to \infty} \frac{g_n}{p^{2n}} \geq \frac{p-1}{2p^2}.
\end{align*}
Our result actually gives the tight bound
\begin{align*}
\liminf_{n \to \infty} \frac{g_n}{p^{2n}} \geq \frac{1}{2(p+1)}.
\end{align*}
Li and Zhao in \cite{LIC} construct a $\Z_p$-tower with the property
\begin{align*}
\lim_{n \to \infty} \frac{g_n}{p^{2n}} = \frac{1}{2(p+1)}.
\end{align*}
Li and Zhao furthermore write ``It would be interesting to determine if the bound of Gold and Kisilevsky is
the best and find some $\Z_p$-extension which realizes it.'' 
Our results show that their tower actually attains our limit, and there does not exist a $\Z_p$-extension reaching 
the lower bound of Gold and Kisilevsky.
\end{remark}

\subsection{Stable genus}

For future arithmetic applications on the stable property of zeta functions in the spirit of Iwasawa theory \cite{WAN7}, we are interested in 
classifying the cases when $g_n$ for large enough $n$ stabilizes. The following result gives a complete answer. 

\begin{proposition} \label{blaat}
 Let $K_{\infty}/K$ be a $\Z_p$-extension. Assume that $n_c=0$. Then the following are equivalent:
\begin{enumerate}
\item The extension $K_{\infty}/K$ is ramified at only finitely many places and for all $\pa$ the set 
\begin{align*}
\{i p^{-v(c_{\pa,i})}: (i,p)=1 \}
\end{align*}
has a maximum. 
\item The extension $K_{\infty}/K$ is ramified at only finitely many places  and for each $\pa$ which ramifies there are $a_{\pa} \in \Q_{>0}$ and $n_{\pa} \in \Z_{\geq 0}$ such that for $n \geq n_{\pa}$ one has
\begin{align*}
u_{\pa,n}=1+ a_{\pa} p^{n}.
\end{align*}
\item There are $a,b,c \in \Q$, $m \in \Z_{\geq 0}$ such that for $n \geq m$ one has
\begin{align*}
g_n=a p^{2n}+b p^n+c.
\end{align*}
\end{enumerate}
\end{proposition}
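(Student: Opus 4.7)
\medskip

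The plan is to prove the three-way equivalence by establishing (i)$\Leftrightarrow$(ii) directly, then the cycle (ii)$\Rightarrow$(iii)$\Rightarrow$(ii). The equivalence (i)$\Leftrightarrow$(ii) is an unpacking of Proposition \ref{715}: set $M_{\pa}(n) := \max\{ip^{-v(c_{\pa,i})} : (i,p)=1,\ v(c_{\pa,i})<n\}$, so that $u_{\pa,n} = 1 + p^{n-1}M_{\pa}(n)$ whenever this set is non-empty. If the sup $M_{\pa} := \sup\{ip^{-v(c_{\pa,i})}\}$ is attained by some $i_{0}$ with $v(c_{\pa,i_{0}}) = v_{0}$, then $M_{\pa}(n) = M_{\pa}$ for all $n > v_{0}$, and we obtain $u_{\pa,n} = 1 + (M_{\pa}/p)p^{n}$ with $a_{\pa} := M_{\pa}/p \in \Q_{>0}$ and $n_{\pa} := v_{0}+1$. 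Conversely, $u_{\pa,n} = 1 + a_{\pa}p^{n}$ for $n \geq n_{\pa}$ forces $M_{\pa}(n) = a_{\pa}p$ for all such $n$, which is precisely the statement that the supremum is attained.

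For (ii)$\Rightarrow$(iii) the idea is direct substitution into Theorem \ref{831}. For each (finitely many) ramified $\pa$, split $\sum_{i=0}^{n}\varphi(p^{i})u_{\pa,i}$ at $i = n_{\pa}$; the initial range contributes a constant $C_{\pa}$, and for $i \geq n_{\pa}$ we use the identities $\sum_{i=0}^{n}\varphi(p^{i}) = p^{n}$ and $\sum_{i=0}^{n}\varphi(p^{i})p^{i} = (1 + p^{2n+1})/(p+1)$ to evaluate the tail. Summing over $\pa$ and dividing by $2$ yields
\[
g_{n} = \frac{p}{2(p+1)}\Big(\sum_{\pa}[k_{\pa}{:}k]a_{\pa}\Big)p^{2n} + \Big(g_{0}-1+\tfrac{1}{2}\sum_{\pa}[k_{\pa}{:}k]\Big)p^{n} + c,
\]
for $n \geq \max_{\pa}n_{\pa}$, with an explicit rational constant $c$, proving (iii).

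The delicate direction is (iii)$\Rightarrow$(ii). From the genus formula together with $g_{n} = ap^{2n} + bp^{n} + c$, one first derives (by taking first differences and dividing by $\varphi(p^{n})$) the exact relation
\[
T(n) := \sum_{\pa}[k_{\pa}{:}k]u_{\pa,n} = Ap^{n} + B, \qquad A = \tfrac{2a(p+1)}{p},\ B = 2b - 2g_{0} + 2,
\]
valid for all $n$ large. Immediately $u_{\pa,n}/p^{n}$ is bounded for every $\pa$, so each $M_{\pa}(\infty) < \infty$ (otherwise a single place would already make $T(n)/p^{n}$ blow up). The key identity is then $T(n+1) - pT(n) = B(1-p)$, a constant: expanding the left side into contributions from newly ramifying primes (where $n_{\pa,u} = n$) and old primes (where $n_{\pa,u} < n$) gives
\[
(p-1)(N_{<n} - B) = \sum_{\pa : n_{\pa,u}=n}[k_{\pa}{:}k]u_{\pa,n+1} + p^{n}\sum_{\pa : n_{\pa,u}<n}[k_{\pa}{:}k]\bigl(M_{\pa}(n+1) - M_{\pa}(n)\bigr),
\]
with both summands on the right non-negative. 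From this one extracts: (a) $N_{<n} \geq B$ and grows only boundedly many times (no new ramification occurs once $N_{<n}$ saturates), forcing finitely many ramified places; and (b) for old $\pa$, the increments $M_{\pa}(n+1) - M_{\pa}(n)$ lie in $p^{-n}\Z_{\geq 0}$ and are controlled by the right-hand side, hence must vanish eventually, so each $M_{\pa}$ stabilizes. The main obstacle here is separating the contributions from different primes in this single global equation: the argument proceeds by combining the bound $N_{<n} = O(p^{n})$ with the exactness of $T(n) = Ap^{n} + B$ (no error term), using the $p$-adic integrality of $M_{\pa}(n)$ to preclude non-trivial increments at arbitrarily large $n$. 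Once both (a) and (b) are established, the form $u_{\pa,n} = 1 + (M_{\pa}/p)p^{n}$ gives (ii).
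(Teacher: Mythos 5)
Your treatment of (i)$\Leftrightarrow$(ii) and of (ii)$\Rightarrow$(iii) is correct and follows essentially the same path as the paper (which does (i)$\Rightarrow$(iii) directly; same computation). The difficulty is in (iii)$\Rightarrow$(ii), and here your argument has a genuine gap at step (b): the assertion that the increments $M_{\pa}(n+1)-M_{\pa}(n)\in p^{-n}\Z_{\geq 0}$ ``are controlled by the right-hand side, hence must vanish eventually.'' Integrality and a constant upper bound do \emph{not} force eventual vanishing. If there is exactly one ramified prime $\pa$ (say $[k_{\pa}:k]=1$) and no new primes appear, your identity reads $(p-1)(N_{<n}-B)=p^{n}\bigl(M_{\pa}(n+1)-M_{\pa}(n)\bigr)$ with $N_{<n}$ constant. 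This is perfectly consistent with $M_{\pa}(n+1)-M_{\pa}(n)=p^{-n}$ for every $n$, i.e.\ a strictly increasing $M_{\pa}(n)$ approaching a finite supremum without ever attaining it. You have shown $p^{n}\bigl(M_{\pa}(n+1)-M_{\pa}(n)\bigr)$ is a non-negative integer bounded by a constant; you have not shown it is eventually zero, and the identity in no way rules out it being $1$ forever.

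This is not merely a gap in your write-up: the implication (iii)$\Rightarrow$(i) is in fact false, and the paper's own proof contains the same error (the assertion ``the maximum is attained at a unique finite level, since these rational numbers\ldots are distinct'' -- distinctness of a bounded increasing sequence of rationals does not make its supremum a maximum). Concretely, let $K=\F_p(X)$ and
\[
a=\sum_{j\geq 1}p^{j-1}\bigl[X^{1-p^{j}}\bigr]\in W(K).
\]
Each $p^{j}-1$ is coprime to $p$, each coordinate of $a$ lies in $\F_p[X^{-1}]$, and by Proposition~\ref{182} the local reduced form of $a$ at $\pa=(X)$ is exactly $\sum_{j}p^{j-1}[X^{-(p^{j}-1)}]$, so $c_{p^{j}-1}=p^{j-1}$ and $c_i=0$ otherwise. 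The tower is ramified only at $(X)$; Proposition~\ref{715} gives $u_{\pa,n}=1+\max_{1\leq j\leq n}(p^{j}-1)p^{n-j}=p^{n}$, and then Theorem~\ref{831} yields
\[
g_n=\frac{p}{2(p+1)}\,p^{2n}-p^{n}+\Bigl(1-\frac{p}{2(p+1)}\Bigr)
\]
for all $n\geq 0$, which is of the form $ap^{2n}+bp^{n}+c$. Yet the set $\{ip^{-v(c_{\pa,i})}\}=\{(p^{j}-1)p^{1-j}:j\geq 1\}$ has supremum $p$ but no maximum, and $u_{\pa,n}=p^{n}$ is \emph{not} of the form $1+a_{\pa}p^{n}$ for any constant $a_{\pa}$. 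So (iii) holds while (i) and (ii) fail. The correct conclusion from your identity (and from the paper's limit argument) is only that $\lim_n M_{\pa}(n)$ exists and is finite for each $\pa$, not that the supremum is attained; the proposition as stated needs a weaker condition in (i)/(ii), or an additional hypothesis, and any complete proof of (iii)$\Rightarrow$(ii) is doomed as written.
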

\begin{proof}
i $\iff$ ii: This follows directly from Theorem \ref{831}. Assume that the maximum at $\pa$ is obtained at $i=i_{\pa}$. The formula gives for $n \geq  v(c_{\pa,i_{\pa}}) + 1 =n_{\pa}$:
\begin{align*}
u_{\pa,n} = 1+ \left( i_{\pa} p^{-v(c_{\pa,i_\pa})-1} \right) p^n.
\end{align*}
i $\implies$ iii: Let $m=\max\{ v(c_{\pa,i_{\pa}})+1: \pa \}$. Set 
$$d_1=\sum_{\pa} [k_{\pa}:k] \sum_{j=0}^{m-1} \varphi(p^j) {u}_{\pa, j}, \ d_2=-p^{m-1}\sum_{\pa} [k_{\pa}:k], \ r_{\pa}=i_{\pa} p^{-v(c_{\pa,i_\pa})}.$$
By Corollary \ref{15b}, we deduce that for $n>m$, 
\begin{align*}
2 g_n - 2 = & p^n ( 2 g_0 - 2 ) + d_1 +\sum_{\pa} [k_{\pa}:k] \sum_{j=m}^{n} \varphi(p^j) u_{\pa,j} \\
=& p^n ( 2 g_0 - 2 ) +d_1+ \sum_{\pa} [k_{\pa}:k] \sum_{j=m}^{n} \varphi(p^j)  (1+r_{\pa} p^{j-1} ) \\
=& \frac{p^{2n}-p^{2(m-1)}}{p+1} \left( \sum_{\pa} [k_{\pa}:k] r_{\pa} \right) + p^n \left( 2 g_0 - 2 + \sum_{\pa } [k_{\pa}:k] \right)  +\left( d_1+d_2 \right) \\
=& p^{2n}a' + p^n b'+c'.
\end{align*}

iii $\implies$ i: One easily obtains
\begin{align*}
\lim_{n \to \infty} \frac{2g_n-2}{p^{2n}} = \lim_{n \to \infty} \frac{ \sum_{\pa} [k_{\pa}:k] \max \{ ip^{-v(c_{\pa,i})}: v(c_{\pa,i})<n\} }{p+1}
\end{align*}
From this expression, one directly sees that only finitely many primes can ramify, and 
for each $\pa$ the limit $\lim_{n \to \infty}  \max \{ ip^{-v(c_{\pa,i})}: v(c_{\pa,i})<n\}$ exists. 
The genus stability further implies that for each $\pa$, $\max_{(i,p)=1} \{ ip^{-v(c_{\pa,i})}\}$ exists. In fact, the maximum is attained at a unique finite level, 
since these rational numbers (if non-zero) are distinct. 
\end{proof}

\begin{definition}
A $\Z_p$-tower $K_{\infty}/K$ is called geometric if $n_c=0$. A geometric $\Z_p$-tower $K_{\infty}/K$ is called \emph{genus-stable} if one of the equivalent conditions of Proposition \ref{blaat} is satisfied.
\end{definition}

\begin{remark} \label{917}
In future research, we would like to find relations between the zeros of $L$-functions of $\Z_p$-towers. We want to understand how the valuations of the zeros of the $L$-functions change in such a tower. The remarkable stable  relations for L-functions as in \cite{WAN7} can only exist if the genus in the tower behaves in a nice way, that is, if the corresponding geometric $\Z_p$-tower is genus-stable. 
\end{remark}

\subsection{Example: the projective line}

Let $K=k(X)$ be the function field of the projective line where $k$ is a finite field. Set $K'=\overline{k}(X)$ where $\overline{k}$ is an algebraic closure of $k$. We will study $\Z_p$-towers over $K$. For $x \in \overline{k}$ we set $\pi_x=X-x \in K'$ and we set $\pi_{\infty}=1/X \in K'$. Let $\alpha \in k$ with $\mathrm{Tr}_{k/\F_p}(\alpha) \neq 0$. Set $\beta=[\alpha]$. Analagous to Example \ref{183},  one can prove:

\begin{lemma} \label{hulu}
Every $a \in W(K)$ is equivalent modulo $\wp W(K)$ to a unique vector $b \in W(K)$ of the form
\begin{align*}
b= c \beta + \sum_{x \in \Ps^1_k(\overline{k})} \sum_{(i,p)=1} c_{x,i} [\pi_x]^{-i} \in W(K) \subset W(\overline{k}(X))
\end{align*}
with $c \in W(\F_p)$, $c_{x,i} \in W(\overline{k})$ such that
\begin{enumerate}
\item
for $\sigma \in \Gal(\overline{k}/k)$, $x \in \Ps^1_k(\overline{k})$: $c_{\sigma x,i}=\sigma c_{x,i}$
\item
for every integer $n \in \Z_{\geq 1}$,  there are only finitely many $c_{x,i}$ with $v(c_{x,i}) \leq n$. 
\end{enumerate}
\end{lemma}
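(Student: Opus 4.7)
My plan is to transfer the problem to $K' := \overline{k}(X)$, where the basis description of $K'/\wp K'$ from Example \ref{183} is already available, and then to descend via Galois.

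Since $\overline{k}$ is algebraically closed, $\wp$ is surjective on $\overline{k}$ (so the constant part of Example \ref{183} disappears) and the monic irreducibles in $\overline{k}[X]$ are exactly the linear factors $X - x$. Example \ref{183} applied to $K'$ therefore yields the $\F_p$-basis
$$\bigl\{\, b\,\pi_x^{-i} : x \in \Ps^1_k(\overline{k}),\ (i,p)=1,\ i \geq 1,\ b \in \mathcal{C}' \,\bigr\}$$
of $K'/\wp K'$, where $\mathcal{C}'$ is an $\F_p$-basis of $\overline{k}$. Combining this with Proposition \ref{182} over $K'$ and the multiplicativity $[b\pi_x^{-i}] = [b][\pi_x]^{-i}$, every class in $W(K')/\wp W(K')$ has a unique expansion $\sum c_{x,i}[\pi_x]^{-i}$ with $c_{x,i} \in W(\overline{k})$ and $v(c_{x,i}) \to \infty$ in the cofinite sense of condition (ii). Given $a \in W(K) \subseteq W(K')$, I take its $K'$-expansion; the Galois group $G = \Gal(\overline{k}/k) = \Gal(K'/K)$ fixes $a$ and permutes the basis by $\sigma[\pi_x]^{-i} = [\pi_{\sigma x}]^{-i}$ while acting on $W(\overline{k})$ componentwise, so the uniqueness of the expansion forces $\sigma c_{x,i} = c_{\sigma x,i}$, which is condition (i). The sum $b_0 := \sum c_{x,i}[\pi_x]^{-i}$ then converges $V$-adically in $W(K')$ and is $G$-invariant, so $b_0 \in W(K')^G = W(K)$ and $\delta := a - b_0 \in W(K) \cap \wp W(K')$.

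The main obstacle is to identify the descent defect, namely to show $\bigl(W(K) \cap \wp W(K')\bigr)/\wp W(K) = W(\F_p)\cdot \beta$. Artin--Schreier--Witt theory (Theorem \ref{15}) identifies the natural map $W(K)/\wp W(K) \to W(K')/\wp W(K')$ as Pontryagin dual to the inclusion $G_{K',p} \hookrightarrow G_{K,p}$, so its kernel is $\Hom_{\mathrm{cont}}(G_{K,p}/G_{K',p}, W(\F_p)) = \Hom_{\mathrm{cont}}(\Gal(\overline{k}/k)_p, W(\F_p)) \cong \Z_p$, using that $\Gal(\overline{k}/k) \cong \widehat{\Z}$ for finite $k$. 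On the other hand, the image of $W(k)/\wp W(k) \hookrightarrow W(K)/\wp W(K)$ classifies constant-field $\Z_p$-extensions of $K$ and hence sits inside this kernel; by Example \ref{183} (second bullet) combined with Proposition \ref{182} this image equals $W(\F_p)\cdot \beta$ and is itself $\cong \Z_p$, and the inclusion into the kernel is injective because a constant-field extension of $K$ that comes from $k$ is trivial over $K$ if and only if it is trivial over $k$. The two copies of $\Z_p$ therefore agree, producing a unique $c \in W(\F_p)$ with $\delta \equiv c\beta \pmod{\wp W(K)}$, which gives the existence statement.

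Uniqueness is then straightforward: two representations agreeing modulo $\wp W(K)$ also agree modulo $\wp W(K')$, and since $\beta \in W(\overline{k}) = \wp W(\overline{k}) \subseteq \wp W(K')$ the two $\beta$-terms drop, so the uniqueness from Proposition \ref{182} over $K'$ forces $c_{x,i} = c'_{x,i}$ for every $(x,i)$; the residual identity $(c-c')\beta \in \wp W(K) \cap W(k) = \wp W(k)$ then forces $c = c'$. The only substantive work throughout is the descent claim in the previous paragraph, which is the Artin--Schreier--Witt translation of the fact that the $\Z_p$-extensions of $K$ becoming trivial over $K'$ are precisely the constant-field extensions.
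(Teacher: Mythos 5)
Your approach via Galois descent from $K' = \overline{k}(X)$ is sound and natural given the way the lemma presents the decomposition inside $W(\overline{k}(X))$: expand $a$ over $K'$ using Example~\ref{183} and Proposition~\ref{182}, read off Galois-equivariance of the coefficients from uniqueness, verify $V$-adic convergence so that $b_0 \in W(K')^G = W(K)$, and reduce to identifying $(W(K) \cap \wp W(K'))/\wp W(K)$ with $W(\F_p)\beta$. The existence skeleton and the uniqueness argument (dropping the $\beta$-term over $K'$, then matching the residual constant via $\wp W(K) \cap W(k) = \wp W(k)$) are correct. The paper itself gives no proof beyond ``analogous to Example~\ref{183}'', so there is nothing more detailed to compare against, but your argument is consistent with what that analogy suggests.

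There is, however, one genuine gap in the descent step. You establish that $W(\F_p)\cdot\beta$ injects into $\ker\bigl(W(K)/\wp W(K) \to W(K')/\wp W(K')\bigr)$ and that both are abstractly $\cong \Z_p$, and you then conclude ``the two copies of $\Z_p$ therefore agree.'' An injection $\Z_p \hookrightarrow \Z_p$ need not be surjective ($p\Z_p \hookrightarrow \Z_p$), so this does not follow; surjectivity needs its own argument. The quickest fix: an abelian $p$-power extension of $K$ that is trivialized over $K'$ is contained in $K' = \overline{k}(X)$, hence has the form $k'(X)$ for a subextension $k' \subseteq \overline{k}$, i.e.\ it is a constant-field extension; under the correspondence this is exactly the statement that $W(k)/\wp W(k) = W(\F_p)\beta$ maps \emph{onto} the kernel. (Equivalently, the composite $W(k)/\wp W(k) \to \ker \cong \Hom_{\mathrm{cont}}(\Gal(\overline{k}/k)_p, W(\F_p))$ is the Artin--Schreier--Witt isomorphism for the finite field $k$.) A secondary, non-fatal imprecision: the map $G_{K',p} \to G_{K,p}$ is not an inclusion --- $K'$ has many new $\Z_p$-extensions that restrict trivially --- but only the closure of its image enters the Pontryagin-duality computation, and the cokernel is indeed $\Gal(\overline{k}_{(p)}/k) \cong \Z_p$, so your identification of the kernel as $\Z_p$ still stands once the wording is corrected. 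With the surjectivity patched, the proof is complete.
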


We can then study $\Z_p$-towers of $K$ given by a reduced vector $a$. Note that reduced vectors $a, a' \in W(K)/\wp W(K)$ induce the same tower if and only $\langle a \rangle = \langle a' \rangle$ if and only if $a=ca'$ with $c \in W(\F_p)^*=\Z_p^*$. One can easily read this off from the expressions of $a$ and $a'$. 

\begin{proposition} \label{hulu2}
Let $a= c \beta + \sum_{x \in \Ps^1_k(\overline{k})} \sum_{(i,p)=1} c_{x,i} [\pi_x]^{-i} \in W(K)$ be given in the unique form as in Lemma \ref{hulu}.  Assume 
\begin{align*}
\mathrm{min}( \{v(c_{x,i}): x \in \Ps^1_k(\overline{k}), (i,p)=1\} \cup \{v(c)\})=0.
\end{align*}
Consider the tower $K_{\infty}/K$ given with field $K_n=K(\wp^{-1} \pi_n(a))$ of genus $g_n$. For $x \in \Ps^{1}_k(\overline{k})$,  set 
\begin{align*}
u_{x,n} = \left\{  \begin{array}{cc} 1+ \max\{ ip^{n-v(c_{x,i})-1}  : (i,p)=1 \textrm{ s.t. } v(c_{x,i})<n\} & \mathrm{if\ }\exists i: v(c_{x,i})<n \\	
0 & \mathrm{otherwise}. \end{array} \right.
\end{align*}
One has $[K_n:K]=p^n$, 
\begin{align*}
n_u=n_c=\mathrm{min}( v(c_{x,i}): x \in \Ps^1_k(\overline{k}), (i,p)=1)
\end{align*}
and 
\begin{align*}
p^{\min(n_c,n)}(2 g_n - 2) = -2p^{n} + \sum_{x \in \Ps^1_k(\overline{k})} \sum_{j=0}^n \varphi(p^j) u_{x,j}.
\end{align*}
Furthermore, a prime $\pa$ is ramified in $K_n/K$ if and only if for a representative $x \in \Ps^1_k(\overline{k})$ of $\pa$ one has $u_{x,n}>0$. 
\end{proposition}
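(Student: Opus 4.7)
The plan is to reduce the global computation to local conductor formulas via Proposition \ref{715} at each place of $K$, and then assemble via Theorem \ref{831}. To start, I would verify $[K_n:K] = p^n$: Lemma \ref{hulu} is the analogue of Proposition \ref{182} for $K = k(X)$, so $c$ and the $c_{x,i}$ are the coordinates of $a \in W(K)/\wp W(K)$ in the explicit $\F_p$-basis of $K/\wp K$ from Example \ref{183}. Hence the $p$-adic valuation of $a$ in the torsion-free $W(\F_p)$-module $W(K)/\wp W(K)$ equals $\min(\{v(c_{x,i})\} \cup \{v(c)\})$, which is $0$ by hypothesis, so $\pi_n(a)$ has order exactly $p^n$ in $W_n(K)/\wp W_n(K)$ and Theorem \ref{11}(ii) gives $[K_n : K] = p^n$.

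The heart of the proof is the identity $u_{\pa, n} = u_{x_1, n}$ for a place $\pa$ of $K$ corresponding to the Galois orbit $\{x_1, \ldots, x_d\}$ of some $x_1 \in \Ps^1_k(\overline{k})$, where $d = [k_\pa : k]$. Since $k_\pa = k(x_1)$ sits canonically inside $K_\pa$, the element $\pi_{x_1} = X - x_1$ lies in $K_\pa$, and it is a uniformizer: the minimal polynomial $f(X) = \prod_j(X - x_j)$ of $x_1$ is a uniformizer of $K_\pa$, and for $j \neq 1$ the factor $X - x_j \equiv x_1 - x_j \pmod{\pi_{x_1}}$ is a unit, so all the valuation of $f$ is concentrated in $\pi_{x_1}$. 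Consequently, when one localizes the global expansion of Lemma \ref{hulu} at $\pa$, only the summands with $x = x_1$ contribute pole terms; the others become Teichmüller lifts of units and are absorbed into the constant piece $c_\pa \beta_\pa$ by Proposition \ref{712}(ii). The Galois equivariance $c_{\sigma x, i} = \sigma c_{x, i}$ forces $c_{x_1, i} \in W(k_\pa)$, because $\Gal(\overline{k}/k_\pa)$ fixes $x_1$. So the local expansion at $\pa$ is already in the reduced form of Proposition \ref{712} with uniformizer $\pi_{x_1}$, and Proposition \ref{715} yields $u_{\pa, n} = u_{x_1, n}$ directly.

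For the assembly: Galois equivariance also shows that $v(c_{x,i})$, and hence $u_{x,n}$, is constant on orbits, so
\begin{align*}
\sum_{\pa} [k_\pa : k]\, u_{\pa, n} \,=\, \sum_{\pa} \sum_{x \in \mathrm{orb}(\pa)} u_{x, n} \,=\, \sum_{x \in \Ps^1_k(\overline{k})} u_{x, n}.
\end{align*}
Plugging into Theorem \ref{831} with $g(\Ps^1) = 0$ gives the stated genus formula. The ramification criterion is immediate from $u_{\pa, n} = u_{x_1, n}$, and the formula for $n_u$ follows since $K_n/K$ is everywhere unramified iff every $u_{x,n}$ vanishes iff $v(c_{x,i}) \geq n$ for all $x$ and all $(i,p) = 1$. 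Finally, $n_u = n_c$ holds because $\Ps^1_{\overline{k}}$ has trivial étale fundamental group, so every everywhere-unramified Galois extension of $k(X)$ must come from a constant field extension. The main subtlety in the whole plan is the identity $u_{\pa, n} = u_{x_1, n}$ — in particular, checking that $\pi_{x_1}$ is a uniformizer of $K_\pa$ itself and that $c_{x_1,i} \in W(k_\pa)$; after that the rest is bookkeeping.
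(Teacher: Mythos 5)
Your proof is correct but takes a genuinely different route from the paper. The paper's proof extends the constant field to $k' = k(x : \exists i,\ v(c_{x,i}) < n)$, so that every ramifying place of $k'(X)$ becomes rational; since $g(K_n) = g(K_n k')$, Theorem \ref{831} applied over $k'$ gives the formula immediately with all residue degrees equal to $1$. You instead work directly over $k$ and prove the crucial identity $u_{\pa,n} = u_{x_1,n}$ for each place $\pa$ with orbit representative $x_1$, then collapse the sum over places into a sum over $\overline{k}$-points using Galois equivariance. Your handling of the two main points — $\pi_{x_1}$ is a uniformizer of $K_\pa$, and $c_{x_1,i} \in W(k_\pa)$ because $\Gal(\overline{k}/k_\pa)$ fixes $x_1$ — is exactly right. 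The one place you are a bit loose is the absorption step: the phrase ``the others become Teichm\"uller lifts of units'' is not quite what you want, since individual terms $c_{y,i}[\pi_y]^{-i}$ need not lie in $W(K_\pa)$. What you should say is that the sum over each \emph{full Galois orbit} distinct from that of $\pa$ is Galois-invariant, hence lies in $W(K) \subset W(K_\pa)$, and all its Witt coordinates are $\pa$-integral because every $\pi_y$ in that orbit is a $\pa$-unit; only then does Proposition \ref{712}(ii) apply to absorb it into $c_\pa\beta_\pa \bmod \wp W(K_\pa)$. What each approach buys: the paper's base change makes the local conductor computation transparent at the cost of verifying that the genus and the $u$'s are stable under constant field extension; your argument avoids the base change entirely and shows directly that the global reduced form of Lemma \ref{hulu} localizes to the local reduced form of Proposition \ref{712}, which is perhaps more informative but requires the Galois-orbit bookkeeping above.
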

\begin{proof}
The first assumption on $a$ assures that $[K_n:K]=p^n$ for all $n$. We will now compute $g_n$. Let $k'=k(x: \exists i, v(c_{x,i})<n)$, a finite field extension of $k$. We define $K_i'=K_i k'$. One has $g_i=g(K_i)=g(k' K_i)$, in particular $g_n=g(K_n')$. Consider the extension $K_n'/K_0'$, which ramifies precisely at the $x$ such that there is an $i$ with $v(c_{x,i})<n$, that is, all the ramifying primes became rational and for such a prime $x$ one has $[k(x)k':k']=1$ (Proposition \ref{712}). One can apply Theorem \ref{831}, Proposition \ref{715} and Proposition \ref{712},  to see $u_{x,n}$ corresponds to the exponent in the conductor at $x$ in the extension $K_n'/K_0'$ and one has
\begin{align*}
p^{\min\{n_c,n \}}(2 g_n - 2)=& p^{n} ( 2 g_0 - 2 )+ \sum_{x} [k(x)k':k'] \sum_{j=0}^n \varphi(p^j) u_{x,j}.
\end{align*} 
Since $g_0=0$, the result follows.
\end{proof}

\begin{example}
Consider the unit root $\Z_p$-extension (called the Artin-Schreier-Witt extension in \cite{WAN7} ) given by the unit root coefficient 
polynomial 
\begin{align*}
x=\sum_{(i,p)=1}^d [b_i X^{i}]=\sum_{(i,p)=1}^d [b_i][X^{i}] \in W(K)
\end{align*}
with $b_i \in k$ and $b_d \neq 0$, $d>0$ not divisible by $p$.  By the above equation, this defines a $\Z_p$-extension which is totally ramified at $\infty$. One finds for $n \geq 1$:
\begin{align*}
u_{\infty,n} = 1+ d p^{n-1}.
\end{align*}
and
\begin{align*}
2g_n-2 =& -2p^n + \sum_{j=1}^n \varphi(p^j) ( 1+ d p^{j-1} )  = -2p^n + p^n-1  + d \frac{p^{2n}-1}{p+1}  \\
=& \frac{d}{p+1}p^{2n} - p^n - \frac{p+1+d}{p+1}.
\end{align*}
This is an example of a genus-stable tower. The stable arithmetic properties of these extensions have been studied and established in \cite{WAN7}. 

\end{example}

\begin{remark}
Let $a=(a_0,a_1, \ldots) \in W(K)$ with $a_0 \not \in \wp W(K)$. Consider the corresponding $\Z_p$-extension given by $a$. Let $\pa$ be a prime of  $K(X)$ of degree $d'$ over $\F_p$ which does not ramify in the tower. We give a geometric way to compute the Frobenius element $(\pa, K_{\infty}/K)$. Let $z \in \Ps^1(\overline{k})$ be a representative of $\pa$. Assume that $z$ is not a pole of the $a_i$ (otherwise, we have to find another representative of $a \pmod{\wp W(K)}$; or one can assume $a$ is in our unique reduced form). Set $a(z)=(a_0(z),a_1(z),\ldots) \in W(k(z))$. Let $y \in \wp^{-1} z \in W(\overline{k})$.  
One has
\begin{align*}
F^{d'}y = y + \sum_{j=0}^{d'-1} F^j(Fy-y) = y+ \mathrm{Tr}_{W(k(z))/W(\F_p)} ( a(z) ).
\end{align*}
This shows that the Frobenius is equal to 
\begin{align*}
(\pa, K_{\infty}/K)= \left(a \mapsto -\mathrm{Tr}_{W(k(z))/W(\F_p)} (a(z)) \right) \subseteq \Hom(\langle a \rangle, W(\F_p))  \cong \Gal(K(\wp^{-1}a)/K).
\end{align*} 
A similar formula works for primes which are not ramified in say $K_n/K$. See \cite[Theorem 3.3.7]{ST} and \cite[Section 3.2.4]{LIU} for more details on this geometric approach. Furthermore, this formula generalizes when $K$ is replaced by another function field.
\end{remark}

\begin{remark}
Some related work on global function fields has been done in the PhD thesis of Shabat \cite{SHAB}. In his thesis, one uses Artin-Schreier-Witt extensions to construct curves with a lot of points in comparison to its genus. Explicit formulae for computing the genus and number of points of the composite of  Artin-Schreier-Witt extensions can be found there.
\end{remark}

\section{Thanks}
We would like to thank Chris Davis for his help with Witt-vectors and for his proofreading of parts of this manuscript.

\end{document}